\numberwithin{equation}{section}
  \providecommand{\Xint}[1]{\mathchoice
    {\XXint\displaystyle\textstyle{#1}}%
    {\XXint\textstyle\scriptstyle{#1}}%
    {\XXint\scriptstyle\scriptscriptstyle{#1}}%
    {\XXint\scriptscriptstyle\scriptscriptstyle{#1}}%
    \!\int}
  \providecommand{\XXint}[3]{{\setbox0=\hbox{$#1{#2#3}{\int}$}
      \vcenter{\hbox{$#2#3$}}\kern-.5\wd0}}
  \providecommand{\dashint}{\mathop{\Xint-}}
\newtheorem{theorem}{Theorem}[section]
\newtheorem{corollary}[theorem]{Corollary}
\newtheorem{example}[theorem]{Example}
\newtheorem{lemma}[theorem]{Lemma}
\newtheorem{remark}[theorem]{Remark}
\newtheorem{theoremalph}{Theorem}
\newtheorem{definitionalph}[theoremalph]{Definition}
\numberwithin{equation}{section}
\DeclareFontFamily{U}{mathx}{}
\DeclareFontShape{U}{mathx}{m}{n}{<-> mathx10}{}
\DeclareSymbolFont{mathx}{U}{mathx}{m}{n}
\DeclareMathAccent{\widehat}{0}{mathx}{"70}
\DeclareMathAccent{\widecheck}{0}{mathx}{"71}
\def\Mpl{\mathcal M_+}
\def\M{\mathcal M}
\def\om{\mathbb{R}^N}
\def\rn{\mathbb{R}^n}
\def\rM{\mathbb{R}^m}
\def\B{{\mathbb B}}
\def\M{{\mathcal M}}
\def\N{{\mathbb N}}
\newcommand{\smallD}{{\scriptscriptstyle D}}
\def\supp{{\rm supp\,}}
\newcommand{\R}{\mathbb{R}}
\newcommand{\barint}{
\rule[.036in]{.12in}{.009in}\kern-.16in \displaystyle\int }
\newcommand{\barcal}{\mbox{$ \rule[.036in]{.11in}{.007in}\kern-.128in\int $}}
\let\@wraptoccontribs\wraptoccontribs
\mathchardef\mhyphen="2D
\title[
Sobolev inequalities for canceling  operators
 ]{
Sobolev inequalities for canceling  operators
%Sobolev inequalities for elliptic  canceling  operators and Riesz potential estimates under co-canceling constraints
}
\author {Dominic Breit, Andrea Cianchi \& Daniel Spector}
\address{Dominic Breit, 
  Institute of Mathematics\\
  TU Clausthal, 
Erzstra\ss e 1, 
Clausthal-Zellerfeld, 
38678 Germany} \email{dominic.breit@tu-clausthal.de}
\address{Andrea Cianchi, Dipartimento di Matematica e Informatica \lq\lq U. Dini"\\
Universit\`a di Firenze,
Viale Morgagni 67/a,
50134 Firenze,
Italy} \email{andrea.cianchi@unifi.it}
\address{Daniel Spector, Department of Mathematics \\ National Taiwan Normal University  No. 88, Section 4, Tingzhou Road, Wenshan District, Taipei City, Taiwan 116, R.O.C.
\newline
and
\newline
National Center for Theoretical Sciences\\
No. 1 Sec. 4 Roosevelt Rd., National Taiwan
University\\
Taipei, 106, Taiwan
\newline
and
\newline
Department of Mathematics \\ 
University of Pittsburgh, 
Pittsburgh, PA 
15261 
USA
}
\email{spectda@gapps.ntnu.edu.tw}
\subjclass[2020]{42B99, 46E35, 46E30}
\keywords{Sobolev inequalities; Riesz potentials;  canceling differential operators; co-canceling differential operators;    rearrangement-invariant spaces; Orlicz spaces}
\begin{document}

\begin{abstract} 
Sobolev type inequalities involving  homogeneous elliptic canceling differential operators and rearrangement-invariant norms on the Euclidean space are considered. They are characterized via considerably simpler one-dimensional Hardy type inequalities. As a consequence, they are shown to hold exactly for the same norms as their counterparts depending on the standard gradient operator of the same order. The results offered provide a unified framework for the theory of Sobolev embeddings for the elliptic canceling operators. They build upon and incorporate earlier fundamental contributions dealing with the endpoint case of $L^1$-norms. They also include previously available results  for the symmetric gradient, a prominent instance of an elliptic canceling operator.
 In particular, the optimal rearrangement-invariant target norm associated with any given domain norm in a Sobolev inequality 
 for any elliptic canceling operator is exhibited. Its explicit form is  
 detected  
  for specific families of rearrangement-invariant spaces, such as the Orlicz spaces and the Lorentz-Zygmund spaces. Especially relevant instances of inequalities for domain spaces neighboring   $L^1$ are singled out.
\end{abstract}

\maketitle

%\todo[inline]{A: we have to add \lq\lq elliptic" whenever we deal with canceling operators. To avoid repeating \lq\lq $k$-th order homogeneous elliptic canceling differential operator" every time, shall we just define it a \lq\lq $k$-th order admissible operator"?}

\section{Introduction}\label{intro}

The present work is aimed at providing a comprehensive approach to  embeddings of Sobolev type involving   \emph{rearrangement-invariant} norms   in $\rn$ of any sort and any \emph{homogeneous elliptic canceling} differential operator.
%A central message we aim at advertising is that all Sobolev embeddings  from  this class share their domain and target norms  with  Sobolev embeddings for  the standard full gradient operator of the same order.
The Sobolev seminorm built upon a differential operator of this kind can be essentially weaker than that associated with the full gradient operator of the same order. A central consequence of our results 
 is that, this notwithstanding,  all Sobolev embeddings  for elliptic canceling operators 
 share their rearrangement-invariant domain and target spaces  with  Sobolev embeddings for  the canonical  gradient operator of the same order.

  The  classical form of the inequalities named  after Sobolev was established  in \cite{sobolev} and asserts that for $n \geq 2$ and $1< p<\frac nk$ one has
\begin{align}
    \label{may1}
    {\|u\|_{L^{\frac{np}{n-k p}}(\rn, \mathbb{R}^\ell)}\leq c \|\nabla ^k u\|_{L^p(\rn, \mathbb{R}^{\ell \times n^k})}}
\end{align}
for every $k$-times  weakly differentiable function $u: \rn \to \R^\ell$ decaying at infinity in a suitable sense (ruling out any polynomial of degree not exceeding $k-1$) and for some constant $c$ independent of $u$. Here, $n,\ell, k \in \N$, and $\nabla ^ku$ denotes the tensor of the $k$th order weak derivatives of $u$.  The inequality \eqref{may1} is also valid for $p=1$, though the proof given by Sobolev in \cite{sobolev} 
via a pointwise estimate for a weakly differentiable function in terms of a Riesz potential of its $k$th order weak derivatives and the boundedness properties of this potential is insufficient to deduce this endpoint case without further refinement.  The validity of the inequality for $p=1$ was first obtained by Gagliardo  \cite{gagliardo} and Nirenberg \cite{nirenberg} based on one-dimensional integration and H\"older's inequality and by Maz'ya \cite{mazya} and Federer and Fleming \cite{federer} via an isoperimetric inequality and the coarea formula.  This endpoint inequality is in some sense fundamental, as the remaining range $1<p<\frac nk$ of \eqref{may1} can be deduced from it.

Versions of the inequality \eqref{may1} with the operator $\nabla ^k$ replaced with different $k$-th order homogeneous differential operators, possibly skipping some of the partial derivatives, are available in the literature. A classical instance, for $k=1$ and $\ell =n$, is provided by the Sobolev inequality for the symmetric gradient operator.
%.
%defined  as $\mathcal E u = \frac 12(\nabla u + (\nabla u)^T)$. 
The symmetric gradient is a distinguished member in the class of elliptic canceling operators, as defined in \cite[Definition 1.2]{VS3} and recalled in Section \ref{S:sobolev}, which render an inequality of the form \eqref{may1} true. Namely, one has that
%\textcolor{red}{we should say what $m$ is}
\begin{align}
    \label{sob-cancelp}
 \|u\|_{L^{\frac{np}{n-kp}}(\rn, \mathbb{R}^\ell)}\leq c \|\mathcal A_k(D) u\|_{L^p(\rn, \R^m)}
\end{align}
for every $k$th order homogeneous elliptic canceling operator $\mathcal A_k(D)$, every  $u$ decaying near infinity, and some 
some constant $c$ independent of $u$. Here, the dimension $m$ depends on $\mathcal A_k(D)$.
\\
For $p>1$, this is a straightforward consequence of the bound
\begin{align}
    \label{korn}
   \|\nabla ^k u\|_{L^p(\rn, \R^{\ell\times n^k})} \leq c \|\mathcal A_k(D)u\|_{L^p(\rn, \R^m)}
\end{align}
  for every function $u$ decaying to zero near infinity and some constant $c$ independent of $u$, which goes back to \cite{CZ}. Remarkably, while the inequality \eqref{korn} fails for $p=1$ (and also for $p=\infty$), 
the inequality \eqref{sob-cancelp} continues to hold, though one must argue by another method.  This discovery of surprising Sobolev inequalities in $L^1$ originated in the pioneering work of Bourgain and Brezis \cite{BourgainBrezis2004, BourgainBrezis2007} before Van Schaftingen's introduction of the canceling condition in \cite{VS3}, which is necessary and sufficient for the validity of the inequality \eqref{sob-cancelp} when $p=1$. This area was subsequently developed in a number of contributions including \cites{  BourgainBrezisMironescu, BourgainBrezis2004, BourgainBrezis2007, DG,DG2,GRV,HS, HRS, LanzaniStein, RaitaSpector, RSS, Spector-VanSchaftingen-2018,VS,VS2,VS2a,VS3,VS4,SSVS}.

Inequalities in the spirit of \eqref{may1} hold in a more general setting than Lebesgue spaces. In a general form, they read
\begin{equation}\label{sobolevnablak}
 \|u\|_{Y(\rn, \R^\ell)} \leq c \|\nabla ^ku\|_{X(\rn, \R^{\ell\times n^k})},
\end{equation}
where $X(\rn, \R^{\ell\times n^k})$ and $Y(\rn, \R^\ell)$ are normed spaces. Diverse techniques have been introduced over the years for the proof of inequalities as in \eqref{sobolevnablak}
for specific classes of function spaces $X(\rn, \R^{\ell\times n^k})$ and $Y(\rn, \R^\ell)$. In particular, for rearrangement-invariant spaces, the inequality  \eqref{sobolevnablak} is known to be equivalent 
to the considerably simpler one-dimensional Hardy type inequality:
\begin{equation}\label{bound1k}
\bigg\|\int_s^\infty r^{-1+\frac k n}f(r)\, dr \bigg\|_{Y(0,\infty)} \leq c \|f\|_{X(0, \infty)}
\end{equation}
 for  $f \in X(0,\infty)$. 
Equivalence principles of this kind, for $k=1$, rest upon properties of symmetrization. Their use in the detection of optimal constants in Sobolev inequalities can be traced back to such contributions as \cite{Aubin, Moser, Talenti}.
 The result for arbitrary $k$ was achieved in \cite{KermanPick} for functions defined on bounded Lipschitz domains and extended, via a different method, to more general frameworks in \cite{cianchi-pick-slavikova, CPS_Frostman}. The case of functions defined on $\rn$ can be found in \cite{mihula}. 

As an inequality of the same nature as \eqref{korn} does not hold if  $L^p$ is replaced with an arbitrary rearrangement invariant space $X$ -- this typically happens for spaces $X$ \lq\lq close" to $L^1$ or $L^\infty$ -- the Sobolev seminorm in $X$ built upon a generic elliptic canceling operator $\mathcal A_k(D)$ is weaker than that built upon $\nabla^k$. 
 Nonetheless, we demonstrate in this paper that
any Sobolev type inequality for elliptic canceling operators $\mathcal A_k(D)$ of the form
\begin{equation}\label{sobolev1intro}
 \|u\|_{Y(\rn, \R^\ell)} \leq c \|\mathcal A_k(D)u\|_{X(\rn, \R^m)},
\end{equation}
with $1\leq k<n$, is  equivalent to \eqref{bound1k}, and hence also to
\eqref{sobolevnablak}.  
\iffalse

This shows that the full equivalence of Sobolev inequalities for elliptic canceling operators and the classical Sobolev inequality for the standard gradient operator of the same order, which follows for Lebesgue spaces from the results mentioned above by Bourgain-Brezis and by Van Schaftingen, holds, in fact, for any rearrangement-invariant space.

gap between inequalities of the form \eqref{korn} and Sobolev type inequalities, which was exhibited for $p=1$ in the case of Lebesgue spaces by the results mentioned above by Bourgain-Brezis and by Van Schaftingen, holds, in fact, for any rearrangement-invariant space.
\fi

In summary, we have that,
if $\mathcal A_k(D)$ is an elliptic canceling operator, with $1\leq k<n$, and
  $X(\rn, \R^m)$ and  $Y(\rn, \R^\ell)$ are rearrangement-invariant spaces,  
then
\begin{verse}
  \emph{\lq\lq The Sobolev inequality   \eqref{sobolevnablak} for  $\nabla ^k$, the Sobolev inequality \eqref{sobolev1intro}  for $\mathcal A_k(D)$,  and  the one-dimensional inequality \eqref{bound1k} are equivalent."}
\end{verse}
This is the content of Theorem \ref{characterization}, which, in a sense, can be regarded as a version for arbitrary rearrangement-invariant spaces of the Bourgain-Brezis and Van Schaftingen $L^1$-results. In this theorem, as well as in all  Sobolev type inequalities  considered in the present paper, the differential operators are assumed to have order $k<n$. The regime $k\geq n$ typically pertains to the realm of Sobolev embeddings into spaces of continuous functions and functions endowed with a certain degree of smoothness, depending on $k$. This is well-known for the standard operator
$\nabla ^k$. An analysis of this kind of embeddings for more general homogeneous elliptic canceling operators is an interesting question, which, however, does not fall within the scope of this work.
Let us mention that results in the same spirit in the special case of the symmetric gradient operator were obtained in \cite{Breit-Cianchi}, via different ad hoc techniques.

Owing to Theorem \ref{characterization}, in Theorem \ref{sobolev-opt-canc} the optimal rearrangement-invariant target space  $Y(\rn, \R^\ell)$ in the inequality \eqref{sobolev1intro} for a given domain space $X(\rn, \R^m)$ is characterized.  Via this result, sharp Sobolev embeddings for elliptic canceling operators for diverse classes of rearrangement-invariant spaces  are offered in Section \ref{S:sobolev}. 
It is  clear from the discussion above that the conclusions are most relevant for borderline spaces $X(\mathbb{R}^n, \mathbb R^m)$
 which are \lq\lq close" to $L^1(\mathbb{R}^n, \mathbb R^m)$.
  Otherwise,   the inequality   \eqref{sobolev1intro}  can be deduced 
 from \eqref{sobolevnablak}
via a version of \eqref{korn} in $X(\mathbb{R}^n, \mathbb R^m)$.  Parallel results for Sobolev type spaces defined on open subsets $\Omega \subset \rn$ with finite Lebesgue measure and consisting of functions vanishing on $\partial \Omega$ are also established.

One illustrative implementation of our general result involves a logarithmic perturbation of  the space $L^1(\mathbb{R}^n, \mathbb R^m)$. 
Given $1\leq k <n$, any $r\geq 0$, and any elliptic canceling operator $\mathcal A_k(D)$, we have that
\begin{align}
    \label{exlog-sob}
    \|u\|_{L^{\frac n{n-k}}(\log L)^{\frac {nr}{n-k}}(\rn, \R^\ell)} \leq c \|\mathcal A_k(D)u\|_{L^1(\log L)^{r}(\rn, \rM)}
\end{align}
 for some constant $c$ and every function $u$ decaying  near infinity, where $L^p(\log L)^\alpha(\rn, \R^\ell)$ denotes the Zygmund space, which agrees (up to equivalent norms) with the Orlicz space associated with a Young function of the form $t^p\log^\alpha (b+t)$, for a suitable constant $b>0$. Moreover, $L^{\frac n{n-k}}(\log L)^{\frac {nr}{n-k}}(\rn, \R^\ell)$ is the  optimal target in \eqref{exlog-sob} among all Orlicz spaces. 
This is a special instance of Example \ref{ex1}. 
The inequality \eqref{exlog-sob} with
$A_k(D)=\nabla ^k$
%, namely
%\begin{align}
 %   \label{exlog-sob-class}
  %  \|u\|_{L^{\frac n{n-k}}(\log L)^{\frac {nr}{n-k}}(\rn, \R^\ell)} \leq c \|\nabla^k u\|_{L^1(\log L)^{r}(\rn, \R^{\ell\times n^k})}
%\end{align}
bears some analogy with the inequality \eqref{may1} for $p=1$, as the argument of Sobolev which dominates a function by a Riesz potential of its $k$th order weak derivatives is insufficient to obtain this result.   Such an inequality can be deduced
via the equivalence to \eqref{bound1k} with $X(0,\infty)=L^1(\log L)^{r}(0,\infty)$ and $Y(0,\infty)=L^{\frac n{n-k}}(\log L)^{\frac {nr}{n-k}}(0, \infty)$.  By contrast, none of the  methods  available until now  was   applicable to obtain \eqref{exlog-sob} for a general $k$th order homogeneous elliptic canceling operator $\mathcal A_k(D)$. 

The equivalence of the inequalities  \eqref{bound1k} and \eqref{sobolev1intro} also enables one to recover, via a unified argument which applies for every $p\in [1, \frac nk)$, the inequality 
\begin{align}
    \label{sob-cancelp'}
 \|u\|_{L^{\frac{np}{n-kp},p}(\rn, \mathbb{R}^\ell)}\leq c \|\mathcal A_k(D) u\|_{L^p(\rn, \R^m)}
\end{align}
 for any elliptic canceling operator $\mathcal A_k(D)$, 
for some constant $c$,  and for every $u$ decaying near infinity. A noticeable feature of \eqref{sob-cancelp'} is in that the Lorentz space $L^{\frac{np}{n-kp},p}(\rn, \mathbb{R}^\ell)$ is optimal (smallest possible) among all rearrangement-invariant spaces.
 This inequality for $\mathcal A_k(D)=\nabla ^k$ is classical \cite{oneil, peetre}, and hence, when $1<p<\frac nk$, it follows for general $\mathcal A_k(D)$ via \eqref{korn}. 
For $p=1$ it
 seems not 
 to appear explicitly in the literature, but it can be deduced as a consequence of \cite[Theorem 3]{Stolyarov}, the relationship between Besov-Lorentz and Lorentz spaces discussed in \cite[Paragraph after Theorem 2]{Stolyarov},
and standard bounds for singular integral operators on Lorentz spaces.

Our approach combines a representation formula for $k$-times weakly differentiable functions in terms of  linear operators, enjoying special properties, applied to $\mathcal A_k(D)u$ with an estimate in rearrangement form, for the relevant operators. The representation formula consists of subsequent compositions of linear combinations of $\mathcal A_k(D)u$ with ad hoc finite-dimensional linear operators,  singular integral operators, and  $k$-th order Riesz potentials. The main feature of such a formula is that the finite-dimensional operators can be chosen in such a way that their composition with $\mathcal A_k(D)u$ results in $k'$-th order divergence free vector fields, for suitable $k'$. The rearrangement estimate amounts to an inequality between (integrals of) the decreasing rearrangement of  divergence free vector fields and the rearrangement of their composition with singular integral operators and Riesz potentials.
This rearrangement estimate is the content of Theorem \ref{K-CZ}. Its proof rests upon the computation of 
the  $K$-functional for
 the couple $(L^1(\rn, \rM),L^{p,q}(\rn, \rM))$, restricted to  divergence-free vector fields. This piece  of information, of independent interest,  is provided by Theorem \ref{lemma2-higherorder}, which   complements results from \cite{Bourgain,Pisier}. Its proof in turn requires a precise analysis of   mapping properties of an arbitrary order   Helmholz projection singular integral operator,
  %\todo{I would call it Helmholz or Leray projection, well-knwon in fluid mechanics}
%  \todo[inline]{A: done}, 
accomplished in Lemma \ref{lemma1-higherorder}.

\section{Background}\label{back}

  In this section we collect basic definitions and properties concerning functions and function spaces playing a role in the paper.
  
Throughout, the relation $\lq\lq \lesssim "$ between two positive expressions means that the former is bounded by the latter, up to a positive multiplicative constant depending on quantities to be specified.
The relations  $\lq\lq \gtrsim "$ and $\lq\lq \approx "$ are defined accordingly. 
%have  between two expressions means that   they are bounded by each
%other  up to multiplicative constants depending on quantities to be specified.

We denote by $|E|$ the Lebesgue measure of a set $E\subset \rn$. Assume that $\Omega $ is a measurable subset of $\rn$, with $n \in \mathbb N$, and let $m \in \mathbb N$.  The notation 
$\M(\Omega, \mathbb R^m)$ is adopted for
 the space of all Lebesgue-measurable functions
$F : \Omega \to \mathbb R^m$. If $m=1$, we shall simply write $\M(\Omega)$. An analogous convention will be adopted for other function spaces. 
%Usually, vector-valued functions will be denoted by capital letters, whereas lower case letters will be adopted in the scalar-valued case.
 We  also define $\Mpl(\Omega)=\{f\in\M(\Omega)\colon f\geq 0 \
\textup{a.e. in}\ \Omega\}$.

The \emph{decreasing rearrangement}  $F^{\ast}:[0, \infty )\to
[0,\infty]$ of a function $F \in \M(\Omega, \mathbb R^m)$ is   defined as
\begin{equation*}
F^{\ast}(s) = \inf \{t\geq 0: |\{x\in \Omega : |F(x)|>t \}|\leq s \}
\qquad \hbox{for $s \in [0,\infty)$}.
\end{equation*}
The function  $F^{**}: (0, \infty ) \to [0, \infty )$ is given by
\begin{equation}\label{c15}
F^{**}(s)=\frac{1}{s}\int_0^s F^*(r)\,dr \qquad \hbox{for $s>0$}.
\end{equation}
Notice that
\begin{align}
    \label{aug11}
    F^{**}(s)= \frac 1s \sup\bigg\{ \int_E|F|\,dx: E\subset \Omega, |E|=s\bigg\}.
\end{align}
The Hardy-Littlewood inequality tells us that
\begin{equation}\label{HL}
\int_{\Omega}|F(x)| |G(x)|dx\leq \int_0^\infty F^*(s) G^*(s)\, ds
\end{equation}
for $F, G\in \mathcal M (\Omega, \rM)$.
\\ Let $L\in (0,\infty]$. A functional
 $\|\cdot\|_{X(0,L)}{:\Mpl(0,L)\to[0,\infty]}$ is called a
\textit{function norm} if, for all functions $f, g\in \Mpl(0,L)$, all sequences
$\{f_k\} \subset {\Mpl(0,L)}$, and every $\lambda \geq 0$:
\begin{itemize}
\item[(P1)]\quad $\|f\|_{X(0,L)}=0$ if and only if $f=0$ a.e.;
$\|\lambda f\|_{X(0,L)}= \lambda \|f\|_{X(0,L)}$; \par\noindent \quad
$\|f+g\|_{X(0,L)}\leq \|f\|_{X(0,L)}+ \|g\|_{X(0,L)}$;
\item[(P2)]\quad $ f \le g$ a.e.\  implies $\|f\|_{X(0,L)}
\le \|g\|_{X(0,L)}$;
\item[(P3)]\quad $f_k \nearrow f$ a.e.\
implies $\|f_k\|_{X(0,L)} \nearrow \|f\|_{X(0,L)}$;
\item[(P4)]\quad $\|\chi _E\|_{X(0,L)}<\infty$ if $|E| < \infty$;
\item[(P5)]\quad if $|E|< \infty$, then there exists a constant
 $c$, depending on $E$ and $X(0,L)$, such that \\   $\int_E f(s)\,ds \le c
\|f\|_{X(0,L)}$.
\end{itemize}
Here, $E$ denotes a measurable set in $(0,L)$, and  $\chi_E$ stands for its characteristic function.
If, in addition,
\begin{itemize}
\item[(P6)]\quad $\|f\|_{X(0,L)} = \|g\|_{X(0,L)}$ whenever $f\sp* = g\sp *$,
\end{itemize}
we say that $\|\cdot\|_{X(0,L)}$ is a
\textit{rearrangement-invariant function norm}.
\\
The \textit{associate function norm}  $\|\cdot\|_{X'(0,L)}$ of a function norm $\|\cdot\|_{X(0,L)}$ is  defined as
$$
\|f\|_{X'(0,L)}=\sup_{\begin{tiny}
                        \begin{array}{c}
                       {g\in{\Mpl(0,L)}}\\
                        \|g\|_{X(0,L)}\leq 1
                        \end{array}
                      \end{tiny}}
\int_0^{L}f(s)g(s)ds
$$
for $ f\in\Mpl(0,L)$.

Let $\Omega$ be a measurable set in $\rn$, and let
 $\|\cdot\|_{X(0,|\Omega|)}$ be   a rearrangement-invariant function norm.  Then the space $X(\Omega, \rM)$ is
defined as the collection of all  functions  $F \in\M(\Omega, \rM)$
such that the quantity
\begin{equation}\label{norm}
\|F\|_{X(\Omega, \rM)}= \|F^*\|_{X(0,|\Omega|)} 
%
%begin{cases}  \|u\sp*_\nu\|_{X(0,\infty)} & \quad \hbox{if $\nu (\mathcal R)= \infty$}
%\\
%\|\phi\sp*_\nu(\nu(\mathcal R)t)\|_{X(0,1)} & \quad \hbox{if $\nu (\mathcal R)< \infty$,}
%\end{cases}
\end{equation}
is finite. The space $X(\Omega, \rM)$ is a Banach space, endowed
with the norm given by \eqref{norm}. 
 The space $X(0,|\Omega|)$ is called the
\textit{representation space} of $X(\Omega, \rM)$.
\\
The \textit{associate
space}  $X'(\Omega, \rM)$
of   a~rearrangement-invariant~space $X(\Omega, \rM)$ is
 the
rearrangement-invariant space   built upon the
function norm $\|\cdot\|_{X'(0,|\Omega|)}$.
%By property \eqref{X''}, $X''(\Omega)=X(\Omega)$.  
%$Hence, any rearrangement-invariant
%space $X(\Omega)$ is always the associate space of another
%rearrangement-invariant space, namely $X'(\Omega)$.
\\
The \textit{H\"older type inequality}
%\[
%\int_{0}\sp1f(t)g(t)\,dt\leq\|f\|_{X(0,L)}\|g\|_{X'(0,L)},
%\]
%holds for every $f,g\in\Mpl(0,L)$, and hence
\begin{equation}\label{holder}
\int_{\Omega}|F| |G|dx\leq\|F\|_{X(\Omega, \rM)}\|G\|_{X'(\Omega, \rM)}
\end{equation}
holds  for every $F\in X(\Omega, \rM)$ and $G \in X'(\Omega, \rM)$.
\\ The norm $\|\cdot\|_{X(\Omega, \rM)}$ is said to be absolutely continuous if for every $F\in X(\Omega, \rM)$ and every sequence of sets $\{E_k\}$, with $E_k\subset \Omega$ and $E_k \to \emptyset$, one has that $$\|F\chi_{E_k}\|_{X(\Omega, \rM)}\to 0.$$
 Given $F, G \in \mathcal M(\Omega, \rM)$, Hardy's lemma ensures that
\begin{align}
    \label{hardy}
    \text{if\,\, $F^{**}(s)\leq G^{**}(s)$,\,\, then \,\,$\|F\|_{X(\Omega, \rM)}\leq \|G\|_{X(\Omega, \rM)}$}
\end{align}
for every rearrangement-invariant space $X(\Omega, \rM)$.
\\
Let $X(\Omega, \rM)$ and $Y(\Omega, \rM)$ be rearrangement-invariant\
spaces. We write $X(\Omega, \rM) \to Y(\Omega, \rM)$ to denote that
$X(\Omega, \rM)$ is continuously embedded into $Y(\Omega, \rM)$, in the sense that
there exists a  constant $c$ such that
$\|F\|_{Y(\Omega, \rM)}\leq c\|F\|_{ X(\Omega, \rM)}$ for every $F\in X(\Omega, \rM)$.
\\
Assume that $X(\rn, \rM)$ is a rearrangement-invariant space. Then
\begin{equation}\label{dec2}
{L^1(\rn, \rM)\cap L^\infty(\rn, \rM)} \to
X(\rn, \rM) \to {L^1(\rn, \rM)+L^\infty(\rn, \rM)}.
\end{equation}
If  $|\Omega|< \infty$, then
\begin{equation}\label{l1linf}
L^\infty (\Omega, \rM) \to X(\Omega, \rM) \to L^1(\Omega, \rM)
\end{equation}
for every rearrangement-invariant space
$X(\Omega, \rM)$.
\\ 
Rearrangement-invariant spaces $X(\Omega, \rM)$ defined on a measurable set $\Omega \subset \rn$ can be extended to rearrangement-invariant spaces
defined on the whole of $\rn$ in a   canonical way as follows. Define the  function norm 
 $\|\cdot\|_{X^e(0, \infty)}$ as 
\begin{equation}\label{sep25}
\|f\|_{X^e(0, \infty)} = \|f^*\|_{X(0, |\Omega|)}
\end{equation}
for $f \in \mathcal M_+(0,\infty)$.
We denote by $X^e(\R^n, \rM)$ the rearrangement-invariant space associated with the function norm $\|\cdot\|_{X^e(0, \infty)}$. One has that 
\begin{equation}\label{feb83}
\|F\|_{X^e(\R^n, \rM)}=  \|F^*\|_{X(0, |\Omega|)}
\end{equation}
for every $F \in \mathcal M(\R^n, \rM)$. In particular, if $F =0$ a.e. in $\rn \setminus \Omega$, then 
\begin{equation}\label{extbis}
\|F\|_{X^e(\R^n, \rM)}=  \|F\|_{X(\Omega, \rM)}.
\end{equation}

Let $\{\rho_h\}$ be a family of smooth mollifiers, namely $\rho_h \in C^\infty_c(B_{1/h})$,   $\rho_h \geq 0$, and $\int_{\mathbb R^n}\rho_h (x)\,dx =1$ for $h \in \N$. Here, $B_R$ denotes the ball, centered at $0$, with radius $R$.
Given $F \in L^1_{\rm loc}(\mathbb R^n, \rM)$, set
$F_h = F *\rho_h$. Then, \begin{equation}\label{nov1}
\int_0^s F_h^* (\tau) \, d\tau \leq \int_0^s F^* (\tau) \, d\tau \qquad \text{for $s \geq 0$,}
\end{equation}
 for every $h \in \N$, and 
hence
\begin{equation}\label{nov2}
\|F_h\|_{X(\mathbb R^n, \rM)} \leq \|F\|_{X(\mathbb R^n, \rM)}
\end{equation}
for every  rearrangement-invariant space $X(\mathbb R^n, \rM)$ and every  $h\in \N$.

\iffalse
\begin{theoremalph}\label{convolution} Let $\rho_h \in C^\infty_c(B_{\frac 1k} (0))$ be such that $\rho_h \geq 0$ and $\int_{\mathbb R^n}\rho_h (x)\,dx =1$ for $h \in \N$. Given $F \in L^1_{\rm loc}(\mathbb R^n, \rM)$, set
$$F_k = F *\rho_h.$$
Then
\begin{equation}\label{nov1}
\int_0^t F_k^* (s) \, ds \leq \int_0^t F^* (s) \, ds \qquad \text{for $t \geq 0$,}
\end{equation}
for every $k \in N$.
Hence,
\begin{equation}\label{nov2}
\|F_\epsilon\|_{X(\mathbb R^n)} \leq \|F\|_{X(\mathbb R^n)}
\end{equation}
for every  rearrangement-invariant space $X(\mathbb R^n)$ and every  $k\in \nabla$.
\end{theoremalph}
\fi
\iffalse
\begin{proof} By O'Neil's convolution inequality,
\begin{equation}\label{nov3}
F_\epsilon ^{**}(t)\leq t F^{**}(t) \rho_\epsilon^{**}(t) + \int_t^\infty F^*(s)  \rho_\epsilon^{*}(s)\, ds \quad \text{for $t >0$.}
\end{equation}
Thus
    \begin{align}\label{approx3}
      F_\epsilon^{**} (t) & \leq F^{**}(t)\int _0^t \rho_\epsilon^{*}(s)\, ds
      + \int _t ^\infty F^*(s) \rho_\epsilon^{*}(s)\, ds
    \\ \nonumber & \leq F^{**}(t) \int _0^t  \rho_\epsilon^{*}(s)\, ds + F^*(t ) \int _t ^\infty   \rho_\epsilon^{*}(s)\, ds
   \\ \nonumber & \leq F^{**}(t) \int _0^\infty  \rho_\epsilon^{*}(s)\, ds
    \\ \nonumber & = F^{**}(t) \int _{\rn} \rho_\epsilon (x) dx =
   F^{**}(t) \quad \hbox{for $t >0,$}
      \end{align}
     namely inequality \eqref{nov1}.
\end{proof}
\fi

Given a couple of normed spaces $(Z_0, Z_1)$, which are both continuously embedded into some Hausdorff vector space, the $K$-functional is defined, for each $\zeta \in Z_0 + Z_1$,  as 
\begin{align}\label{Kfunct}
K(\zeta, t; Z_0, Z_1) = \inf \big\{  \|\zeta_0\|_{Z_0} + t \|\zeta_1\|_{Z_1}: \zeta = \zeta_0 + \zeta_1, \, \zeta_0 \in Z_0, \, \zeta_1 \in Z_1\big\} \quad \text{for $t>0$.}
%_{\begin{tiny} \begin{array}{c}
                %       {\zeta = \zeta_0 + \zeta_1}\\
                 %       {\zeta_0 \in Z_0, \, \zeta_1 \in Z_1}
                  %    \end{array}
                  %    \end{tiny}} \big(\|\zeta_0\|_{Z_0} + t \|\zeta_1\|_{Z_1}\big) \qquad \text{for $t>0$.}
\end{align}
\iffalse
 If $0<\theta <1$ and $1\leq q <\infty$, the interpolation space $(Z_0,Z_1)_{\theta,q}$ consists of all
 $\zeta \in Z_0 +Z_1$ for which the functional
 \begin{equation}\label{dec38}
 \|\zeta\|_{(Z_0,Z_1)_{\theta,q}} = \bigg(\int_0^\infty \big(t^{-\theta} K(\zeta, t; Z_0, Z_1)\big)^q\frac {dt}t\bigg)^{\frac 1q}
 \end{equation}
 is finite.\fi
 \iffalse
 \\
 A basic property of the $K$-functional 
 %-- see e.g. \cite[Chapter 5, Theorem 1.11]{BennettSharpley} --  
 tells us that, if $(Z_0, Z_1)$ and $(U_0, U_1)$ are normed spaces as above and $T$ is a bounded linear operator
 $$T : Z_0 \to U_0 \quad \text{and} \quad T: Z_1 \to U_1,$$
 with norms $N_0$ and $N_1$, then
\begin{equation}\label{dec37}
K(T\zeta , t; U_0, U_1)
\leq  N_0K(\zeta, N_1t/N_0; Z_0,  Z_1) \quad\text{for $t>0$,}
\end{equation}
for every $\zeta \in Z_0 + Z_1$.
\fi
Let $n, m\in \mathbb N$ and let $\|\cdot \|_{X(0,\infty)}$ and  $\|\cdot \|_{Y(0,\infty)}$ be rearrangement-invariant function norms.    
%Set
%$$X^m(\Omega)= \{\bfU=(U_1, \dots U_m): \Omega \to \R^m \, \text{s.t.} \, U_i \in X(\Omega), \, i=1, \dots m\},$$
%equipped with the norm
%$$\|\bfU\|_{X^m(\Omega)}= \sum_{i=1}^m \|U_i\|_{X(\Omega)},$$
%and define $Y^m(\Omega)$ analogously.
Then 
\begin{equation}\label{K-v1}
K(|F|, t; X(\rn), Y(\rn)) =K(F, t; X(\rn, \mathbb R^m), Y(\rn, \mathbb R^m)) 
%K(|\bfU|, t; X(\Omega), Y(\Omega))\leq  K(\bfU, t; X(\Omega, \mathbb R^m), Y(\Omega, \mathbb R^m)) 
%\leq m K(|\bfU|, t; X(\Omega), Y(\Omega))
\quad \text{for $t>0$,}
\end{equation}
for $F\in X(\rn, \mathbb R^m) +Y(\rn, \mathbb R^m)$. See \cite[Lemma 7.3]{Breit-Cianchi} for a proof.

Given $p\in [1,\infty)$ and $q\in [1,\infty]$, the Lorentz space $L^{(p,q)}(\Omega, \rM)$ is the rearrangement invariant space associated with the function norm given by
\begin{align}
    \label{lorentz}
    \|f\|_{L^{(p,q)}(0,|\Omega|)}= 
    \big\|s^{1/p-1/q}f^{**}(s)\|_{L^q(0,|\Omega|)}
%    
   % \bigg(\int_0^L\big(f^{**}(s)s^{1/p}\big)^q\frac {ds}s\bigg)^{\frac 1q}
\end{align}
for $f\in \Mpl (0,|\Omega|)$. 
%If $p \in (1, \infty)$, then the norm \eqref{lorentz} is equivalent, up to multiplicative constants depending on $p$ and $q$, to the functional defined as 
The functional $\|\cdot\|_{L^{p,q}(\Omega, \rM)}$ and the space $L^{p,q}(\Omega, \rM)$ are defined analogously, by replacing $f^{**}$ with $f^*$ in \eqref{lorentz}. 
%
%By $L^{p,q}(\Omega, \rM)$ we denote the space of those functions such that
%\begin{align}
%    \label{nov105}
    %\|f\|_{L^{p,q}(0,|\Omega|)}=
    %\big\|s^{1/p-1/q}f^{*}(s)\|_{L^q(0,|\Omega|)}<\infty,
%\end{align}
%and by $\|F\|_{L^{p,q}(\Omega, \rM)(0,|\Omega|)}$ the functional defined as in \eqref{nov105}, with $f=|F|$.
    %for $f\in \Mpl (0,|\Omega|)$. 
    This functional is a norm if $1\leq q\leq p$, and  is equivalent to the norm $\|\cdot\|_{L^{(p,q)}(\Omega, \rM)}$ if $p\in (1,\infty)$.
    \\
    The family of Lorentz spaces extends that of Lebesgue spaces, since $L^{(p,p)}(\Omega, \rM)= L^p(\Omega, \rM)$, up to equivalent norms, for $p\in (1,\infty)$, and $L^{p,p}(\Omega, \rM)= L^p(\Omega, \rM)$ for $p\in [1,\infty)$.
    \\ If $q<\infty$, then the norm $\|\cdot\|_{L^{(p,q)}(\Omega, \rM)}$ is absolutely continuous.
    \\ 
    %Since $(L^{p,q})'(\Omega, \rM)= L^{p',q'}(\Omega, \rM)$, up to equivalent norms, for $p\in (1,\infty)$,the inequality \eqref{holder} yields:\todo[inline]{Daniel:  I believe this also requires $q<+\infty$ since the dual of $L^{p,\infty}$ is not $L^{p',1}$ }
 For $p\in (1,\infty)$, the H\"older inequality in Lorentz spaces reads
\begin{align}
    \label{holderlor}
    \int_{\Omega} |F||G|  \, dx \leq  \|F\|_{L^{p,q}(\Omega, \rM)} \|G\|_{L^{p',q'}(\Omega, \rM)}
\end{align}
 for  $F\in L^{p,q}(\Omega, \rM)$ and $G\in L^{p',q'}(\Omega, \rM)$. Here, $p'$ and $q'$ denote the H\"older conjugates of $p$ and $q$.
\\ 
Moreover, if $p\in (1,\infty)$, then there exists a constant $c$ such that
\begin{align}
    \label{aug3}
\|F\|_{L^{p,q}(\rn, \rM)} \leq c \|F\|_{L^1(\rn, \rM)}^{\frac 1p} \|F\|_{L^\infty(\rn, \rM)}^{\frac 1{p'}}
\end{align}
for  $F\in {L^1(\rn, \rM)\cap L^\infty(\rn, \rM)}$.
\iffalse
\todo[inline]{A: is there a reference for \eqref{aug3}?}
Indeed, the first embedding in \eqref{dec2} ensures that there exists a constant $c$ such that
\begin{align}
    \label{aug4}
    \|F\|_{L^{p,q}(\rn, \rM)} \leq c \big(\|F\|_{L^1(\rn, \rM)}+\|F\|_{L^\infty(\rn, \rM)} \big)
\end{align}
for  $F\in {L^1(\rn, \rM)\cap L^\infty(\rn, \rM)}$. Given $\lambda >0$, an application of the inequality \eqref{aug4} with $F$ replaced with the function $F_\lambda$, given by $F_\lambda(x)=F(\lambda x)$ for  $x\in \rn$, enables us to deduce that
\begin{align}
    \label{aug5}
    \|F\|_{L^{p,q}(\rn, \rM)} \leq c \big(\lambda ^{-\frac n{p'}}\|F\|_{L^1(\rn, \rM)}+\lambda ^{\frac np}\|F\|_{L^\infty(\rn, \rM)} \big)
\end{align}
for  $F\in {L^1(\rn, \rM)\cap L^\infty(\rn, \rM)}$. Here, we have made use of the fact that $F_\lambda ^*(s)=F^*(\lambda ^ns)$ for $s \geq 0$.
Choosing $\lambda$ in such a way that the addends in brackets on the right-hand side of the inequality  \eqref{aug5} agree yields \eqref{aug3}.
\fi
\\
 The Lorentz-Zygmund spaces $L^{(p,q, r)}(\Omega, \rM)$ provide a generalization of the Lorentz spaces. Their norm is built upon the function norm given, for $p\in [1, \infty)$, $q\in [1, \infty]$, and $r \in \mathbb R$,   by 
\begin{equation}\label{LZ}
\|f\|_{L\sp{(p,q,r)}(0,|\Omega|)}=
\left\|s\sp{\frac{1}{p}-\frac{1}{q}}(1+\log_+ (\tfrac{1}{s}))^r f^{**}(s)\right\|_{L\sp q(0,|\Omega|)}
\end{equation}
for  $f \in {\Mpl(0,|\Omega|)}$. Here, $\log_+$ stands for the positive part of $\log$.  
A replacement of $f^{**}$ with $f^*$ in \eqref{LZ} leads to the functional  $\|\cdot\|_{L^{p,q,r}(\Omega, \rM)}$ and a corresponding space $L^{p,q,r}(\Omega, \rM)$. The new functional is equivalent to $\|\cdot\|_{L^{(p,q,r)}(\Omega, \rM)}$ if $p\in (1,\infty)$.
\\ In a few borderline inequalities, the 
generalized Lorentz-Zygmund spaces will also emerge. They are built upon the functional defined as 
\begin{equation}\label{GLZ}
\|f\|_{L^{p,q,r,\varrho}(0,|\Omega|)}=
\left\|s\sp{\frac{1}{p}-\frac 1q}(1+\log_+ (\tfrac{1}{s}))^{r} (1+\log_+ (1 + \log_+ \tfrac{1}{s})))^{\varrho}f^{*}(s)\right\|_{L\sp q(0,|\Omega|)}
\end{equation}
for  $f \in {\Mpl(0,|\Omega|)}$, where $p,q,r$ are as above and $\varrho \in \mathbb R$.
%
%Like for the plain Lorentz spaces, if $p\in (1, \infty)$, replacing $f^{**}$ with $f^*$
%in \eqref{LZ} results in the functional
%\begin{equation}\label{LZ*}
%\left\|s\sp{\frac{1}{p}-\frac{1}{q}}(1+\log_+ (\tfrac{1}{s}))^r f^{*}(s)\right\|_{L\sp q(0,|\Omega|)}.
%\end{equation}
%The space $L\sp{p,q,r}(\Omega, \rM)$. The 
\\
The  Orlicz spaces extend the Lebesgue spaces in a different direction. Their definition
rests on that of a \textit{Young
function}, namely a left-continuous convex function  from  $[0, \infty )$ into  $[0, \infty ]$ that vanishes
at $0$ and is not constant in $(0, \infty)$. Any Young function $A$ can be represented as
\begin{equation}\label{A}
A(t) = \int _0^t a(s)\, ds \quad \quad \textup{for}\ t\in[0,\infty),
\end{equation}
 for a non-decreasing, left-continuous function $a: [0, \infty )\to
[0, \infty ]$, which is neither identically equal to $0$, nor to infinity. 
\\
Two Young functions $A$ and $B$ are said to be equivalent   globally/near infinity/near zero if there exists a positive constant $c$ such that
\begin{align}
    \label{2024-210}
    A(t/c) \leq B(t) \leq A(ct)
\end{align}
%\todo[inline]{Grammatically I think we do not need the following two "and"s.}
%\todo[inline]{A: is this better?}
%\todo[inline]{What about the following?}
for $t\geq 0$/ $t\geq t_0$ for some $t_0>0$/$0\leq t\leq t_0$ for some $t_0>0$, respectively. We shall write
\begin{align}
\label{2024-211} 
A \simeq B
\end{align}
to denote the  equivalence between $A$ and $B$ in the sense of \eqref{2024-210}. Note that, for Young functions $A$ and $B$, the relation \eqref{2024-211} implies  that $A \approx B$, but the converse is not true.
\\ The \textit{Orlicz space}
$L\sp A(\Omega, \rM)$ is the rearrangement-invariant  space defined via
the  
\textit{Luxemburg function norm} given
by
\begin{equation}\label{lux}
\|f\|_{L^A(0,|\Omega|)}= \inf \left\{ \lambda >0 :  \int_{0}^{|\Omega|}A \left(
\frac{f(t)}{\lambda} \right) dt \leq 1 \right\}
\end{equation}
for  $f \in {\Mpl(0,|\Omega)}$. The alternate notation $A(L) 
(\Omega, \rM)$ will also be employed when convenient for explicit choices of the function $A$.
%The equi-measurability of $f$ and $f^*$ ensures that  $\|\cdot \|_{L^A (0,L)}$ is actually a rearrangement-invariant function norm.
%\note[inline]{Lubo\v s: Is it clear that with our definition of a Young function the corresponding space is an r.i. space?}
%Lebesgue function norms are easily recovered via suitable choices of the function $A$.
%In particular,
%$L^A (0,1)= L^p (0,1)$ if $A(t)= t^p$ for some $p \in [1, \infty )$,
%and $L^A (0,1)= L^\infty (0,1)$ if $A(t)=0$ for $t\in [0, 1]$ and
%$A(t) = \infty$ for $t>1$.
\\
The  norms $\|\cdot \|_{L^A
(\Omega, \rM)}$ and $\|\cdot \|_{L^B (\Omega, \rM)}$ are equivalent if and only if either $|\Omega|<\infty$ and
$A$ and $B$ are equivalent near infinity, or $|\Omega|=\infty$ and $A$ and $B$ are equivalent globally.
\\ Besides the Lebesgue spaces $L^p(\Omega, \rM)$, corresponding to the choice $A(t)=t^p$ if $p \in [1, \infty)$ and $A(t)=\chi_{(1, \infty)}\infty$ if $p=\infty$, the Zygmund spaces and the exponential type spaces are customary instances of Orlicz spaces. The Zygmund spaces $L^p(\log L)^r(\Omega, \rM)$ are associated with Young functions of the form $A(t)= t^p(\log (c+t))^r$, where either $p>1$ and $r \in \mathbb R$, or $p=1$ and $r\geq 0$, and $c$ is large enough for $A$ to be convex. If $|\Omega|<\infty$,  one has that  \begin{align}
    \label{eqZYg}
    \|f\|_{L^p(\log L)^r(0, |\Omega|)} \approx \big\|(1+\log_+ (\tfrac{1}{s}))^\frac{r}{p}f^*(s)\big\|_{L^p(0, |\Omega|)}
\end{align}
for $f \in \Mpl (0, |\Omega|)$, up to multiplicative constants independent of $f$ -- see \cite[Lemma 6.12, Chapter 4]{BennettSharpley}. Hence, if $p\in (1,\infty)$, then $L^p(\log L)^r(\Omega, \rM)= L^{p,p, \frac rp}(\Omega, \rM)$, up to equivalent norms.
The exponential spaces $\exp L^r (\Omega, \rM)$, for $r>0$, are built upon Young functions  $A(t)\simeq   e^{t^r}-1$ near infinity.

%\\ The \textit{Young conjugate} of $A$ is  denoted by  $\widetilde A$.
%and defined as
%$$
%\tilde{A}(t)=\sup\{st - A(s): ~s \geq 0\} \quad \text{for $t\in [0,\infty)$}.
%$$
%The associate function norm  of   $\|\cdot \|_{L^A
%(0,1)}$ is equivalent to $\|\cdot \|_{L^{\widetilde A}
%(0,1)}$, up to absolute multiplicative constants.
%\todo[inline]{Lenka: This is needed in the proof of Theorem~\ref{EX:trace_orlicz-lorentz}.}

\par  The
family of Orlicz-Lorentz spaces encompasses various instances of Orlicz,  Lorentz, and Lorentz-Zygmund spaces. A special 
class of Orlicz-Lorentz spaces, which comes into play in our applications, is defined as follows.
Given  a Young function $A$ and a number $q\in \R$, we denote by
 $L(A,q)(\Omega, \rM)$ the \emph{Orlicz-Lorentz space}  defined via the functional
\begin{equation}\label{sep35}
	\|f\|_{L(A, q)(0,|\Omega|)}
		= \big\|r^{-\frac{1}{q}}f^{*}(r)\big\|_{L^A(0,|\Omega|)}
\end{equation}
for $f\in \Mpl (0,|\Omega|)$.
 Under proper assumptions on $A$ and $q$, this functional is actually a function norm. This is the case, for instance, if  $q>1$ and
\begin{equation}\label{sep36}
\int^\infty \frac{A(t)}{t^{1+q}}\, dt < \infty\,,
\end{equation}
see \cite[Proposition 2.1]{cianchi_ibero}.  

  \subsection{Sobolev type spaces}
Let $\Omega$ be an open set in $\rn$, with $n \geq 2$,  and let $\|\cdot \|_{X(0, |\Omega|)}$ be a rearrangement-invariant function norm. Given $k, m \in \N$ and a $k$-th order homogeneous differential operator,  with constant coefficients, $\mathcal A_k(D)$ mapping $\mathbb{R}^\ell$-valued functions to $\mathbb{R}^m$-valued functions, the homogeneous Sobolev space associated with $\|\cdot \|_{X(0, |\Omega|)}$ is defined as 
\begin{equation}\label{XA}
V^{\mathcal A_k}X(\Omega, \R^\ell)= \big\{ u: \Omega \to \mathbb{R}^\ell:  \,\,\text{$u$ is $(k-1)$-times weakly differentiable and }\,   \mathcal A_k(D) u \in X(\Omega, \rM)\big\}.
\end{equation}

Plainly, polynomials of degree not exceeding $k-1$ have to be excluded when considering   Sobolev type inequalities  for functions in $V^{\mathcal A_k}X(\Omega, \mathbb{R}^\ell)$. 
When $\Omega =\rn$,
this is accomplished by restricting the relevant inequalities to 
 a subspace $V^{\mathcal A_k}_{\circ} X(\rn, \mathbb{R}^\ell)$ consisting of those functions which decay to zero, together with their derivatives up to the order $k-1$, in a suitable weak  sense. Specifically, this subspace is defined as:
\begin{equation}\label{XAD}
V^{\mathcal A_k}_{\circ} X(\rn, \mathbb{R}^\ell) = \Bigg\{ u\in V^{\mathcal A_k}X(\rn, \mathbb{R}^\ell):\, \lim_{R\to \infty} R^j \dashint_{B_{2R} \setminus B_R} |\nabla ^ju|\, dx =0, \,\, j=0, \dots ,  k-1\Bigg\},
\end{equation}
%\todo[inline]{Question:  Is $C^\infty_c$'s closure (in $\|\mathcal A_k \cdot \|_X$) contained in this space?}
 where 
 $\dashint$ stands for an averaged integral and $B_R$ denotes the ball centered at $0$ with radius $R$. The space $C^\infty_\circ (\rn, \mathbb{R}^\ell)$ is defined accordingly.
 \\
 Of course, any function in $V^{\mathcal A_k}X(\rn, \mathbb{R}^\ell)$ with bounded support belongs to $V^{\mathcal A_k}_{\circ} X(\rn, \mathbb{R}^\ell)$. On the other hand, if $u\in V^{\mathcal A_k}_{\circ} X(\rn, \mathbb{R}^\ell)$, then 
 \begin{align}
     \label{levels}
     |\{ |\nabla ^ju|>t\}|<\infty 
 \quad \text{for $t>0$,}
 \end{align}
 for  $j=0, \dots , k-1$.
 \\ Sobolev inequalities on open sets $\Omega$ such that $|\Omega|<\infty$ will also be considered. Admissible functions in the relevant $k$-th order inequalities  will be required to vanish, together with their derivatives up to the order $k-1$, on $\partial \Omega$. This means that the functions in question belong to the space
\begin{equation}\label{X0}
V^{\mathcal A_k}_0 X(\Omega, \mathbb{R}^\ell) = \big\{ u\in V^{\mathcal A_k}X(\Omega, \mathbb{R}^\ell): \text{the extension of $u$ by $0$ outside $\Omega$ belongs to $V^{\mathcal A_k} X(\rn, \mathbb{R}^\ell)$}\big\}.
\end{equation}
 In the special case when $\mathcal A_k(D)=\nabla ^k$, we simply denote the space $V^{\mathcal A_k}X(\Omega, \mathbb{R}^\ell)$ by $V^{k}X(\Omega, \mathbb{R}^\ell)$. A parallel convention is adopted for the spaces  $V^{\mathcal A_k}_{\circ} X(\rn, \mathbb{R}^\ell)$ and $V^{\mathcal A_k}_0X(\Omega, \mathbb{R}^\ell)$.

\section{Sobolev inequalities for elliptic canceling operators}\label{S:sobolev}

The characterization of rearrangement-invariant norms supporting Sobolev inequalities for elliptic canceling   operators in terms of a one-dimensional Hardy type inequality is the main result of this paper and is offered below. As mentioned above, such a characterization only depends on the order of the differential operator.  It is  hence the same for all the operators in this class, including  the classical full $k$-th order gradient. As a consequence, any embedding of Sobolev type for the standard $k$-th order gradient, involving rearrangement-invariant norms, directly translates into an embedding with the same norms for any elliptic canceling  operator of the same order.

 \smallskip
 
Elliptic differential operators can be defined as follows.

\begin{definitionalph}[{\bf Elliptic operator}]
    \label{ell-op} {\rm  Let $n, \ell, m \in \mathbb N$.
     A linear homogeneous $k$-th order constant coefficient differential operator
$\mathcal A_k(D)$ mapping $\mathbb{R}^\ell$-valued functions to $\mathbb{R}^m$-valued functions
is said to be  elliptic if the linear  map $\mathcal A_k(\xi):\mathbb R^{\ell}\rightarrow\mathbb R^m$ is injective for every $\xi\in\mathbb R^n\setminus \{0\}$. Here,   $\mathcal A_k(\xi)$ denotes the symbol map of $\mathcal A_k(D)$ in terms of Fourier transforms.}
\end{definitionalph}

The  notion of canceling differential operator adopted in this paper is patterned on
 \cite[Definition 1.2]{VS3}. 

 \begin{definitionalph}[{\bf Canceling operator}]
    \label{canc-op}{\rm
     Let $n, \ell, m \in \mathbb N$. A linear homogeneous $k$-th order constant coefficient  differential operator $\mathcal A_k(D)$ mapping $\mathbb{R}^\ell$-valued functions to $\mathbb{R}^m$-valued functions
  is said to be  canceling if 
\begin{align*}
 \bigcap_{\xi\in\mathbb{R}^n \setminus \{0\}}\ \mathcal A_k(\xi)[\mathbb{R}^\ell]=\{0\}.
\end{align*}}
\end{definitionalph}

The symmetric gradient is a classical instance of first-order elliptic canceling operator. It is defined as
$$\mathcal E u = \frac{\nabla u + (\nabla u)^T}2$$
at functions $u: \rn \to \rn$, and
has a role in  modeling diverse physical 
phenomena, such as the theory 
of (generalized) Newtonian 
fluids and the  theories of plasticity and nonlinear
elasticity. Another important
example of an operator of this class is the trace-free version of $\mathcal E$ for $n\geq 3$, also 
called the deviatoric part of the symmetric gradient. It is given by 
$$\mathcal E^Du = \mathcal E u - \frac{{\rm tr}(\mathcal Eu)}{n}I.$$
This operator comes into play   in the analysis of mathematical models for compressible 
fluids,  in the
Cauchy formulation of the Einstein gravitational field equations, in the Cosserat theory of elasticity. \\ Further samples of less conventional elliptic canceling operators can be found in \cite[Section 6]{VS3}.

%\todo[inline]{A: shall we mention a few more examples of canceling operators? Now we are just dealing with this topic, and the results for the symmetric gradient are already contained in my paper with Dominic.}

\begin{theorem}[{\bf Reduction principle for Sobolev inequalities for canceling operators in $\rn$}]\label{characterization} 
Let $n, m, \ell, k \in \N$, with $n \geq 2$     and $1 \leq k <n$.  Assume that $\mathcal A_k(D)$ is a linear homogeneous
$k$-th order  elliptic canceling operator.
Let   $\|\cdot\|_{X(0,\infty)}$ and $\|\cdot\|_{Y(0,\infty)}$ be rearrangement-invariant function norms.
%$X(\rn, \mathbb R^m)$ and {\color{blue}   $Y(\mathbb R^n, \mathbb R^\ell)$ be rearrangement-invariant spaces.
  The following facts are equivalent:
\\ (i) The embedding 
$V^{\mathcal A_k}_{\circ} X(\rn, \mathbb{R}^\ell) \to Y(\rn, \R^\ell)$ holds, i.e. 
there exists a constant $c_1$ such that
\begin{equation}\label{sobolev1}
\|u\|_{Y(\rn, \mathbb R^\ell)} \leq c_1 \|\mathcal A_k(D)u\|_{X(\rn, \rM)}
\end{equation}
 for every $u \in V^{\mathcal A_k}_{\circ} X(\rn, \mathbb{R}^\ell)$. \\ (ii) The embedding 
 $V^{k}_\circ X(\rn, \R^{\ell}) \to Y(\rn, \R^\ell)$ holds, i.e. 
 there exists a constant $c_2$ such that
\begin{equation}\label{sobolev2}
\|u\|_{Y(\rn, \R^\ell)} \leq c_2 \|\nabla ^ku\|_{X(\rn, \R^{\ell \times n^k})}
\end{equation}
 for every $u \in V^{k}_\circ X(\rn, \R^{\ell})$. 
 \\
 (iii)
There exists a constant $c_3$ such that
\begin{equation}\label{sobolev3}
\bigg\|\int_s^\infty r^{-1+\frac {k}{n}}f(r)\, dr \bigg\|_{Y(0,\infty)} \leq c_3 \|f\|_{X(0, \infty)}
\end{equation}
for every $f \in X(0,\infty)$.
\\ Moreover, the constant $c_1$ depends only on  $c_3$ and $\mathcal A_k(D)$,   while $c_3$ depends only on $c_2$,  $k$, $n$ and $m$.
\end{theorem}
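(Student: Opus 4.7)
The plan is to close the cycle via $\mathrm{(iii)}\Rightarrow\mathrm{(i)}\Rightarrow\mathrm{(ii)}\Rightarrow\mathrm{(iii)}$. The implication $\mathrm{(i)}\Rightarrow\mathrm{(ii)}$ would be immediate: since $\mathcal A_k(D)$ is a constant-coefficient linear combination of partial derivatives of order $k$, one has the pointwise bound $|\mathcal A_k(D)u|\leq c|\nabla^k u|$, whence $\|\mathcal A_k(D)u\|_X\leq c\|\nabla^k u\|_X$ by rearrangement invariance, and any $u\in V^k_\circ X(\rn,\R^\ell)$ lies automatically in $V^{\mathcal A_k}_\circ X(\rn,\R^\ell)$. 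I would then take the equivalence $\mathrm{(ii)}\Leftrightarrow\mathrm{(iii)}$ from the standard reduction principle for $k$-th order Sobolev embeddings into rearrangement-invariant spaces on the whole of $\rn$, as in \cite{mihula}. Hence the genuinely new content is the implication $\mathrm{(iii)}\Rightarrow\mathrm{(i)}$, and this is where the ellipticity and canceling assumptions must be used in full.

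For that direction the plan is to set up a representation of the form
\[
u \;=\; I_k \ast \bigl(S\bigl[L\,\mathcal A_k(D)u\bigr]\bigr),
\]
where $L:\rM\to\R^M$ is a fixed finite-dimensional linear operator, $S$ is a matrix-valued Calder\'on--Zygmund singular integral, and $I_k$ is the $k$-th order Riesz potential. The crucial design criterion is that $L$ be chosen, using ellipticity and the canceling condition at the symbol level, so that the intermediate field $V:=L\,\mathcal A_k(D)u$ satisfies a $k'$-th order divergence-free identity for a suitable $k'$. Injectivity of $\mathcal A_k(\xi)$ yields a left inverse (hence the existence of $S$), while the canceling property $\bigcap_\xi \mathcal A_k(\xi)[\R^\ell]=\{0\}$ is precisely what allows one to place $\mathcal A_k(D)u$ into the kernel of a non-trivial homogeneous polynomial map, which translates into the divergence-free structure of $V$.

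Once such a representation is in hand, I would invoke Theorem \ref{K-CZ}, whose content is an improved rearrangement bound of Hardy type for $k'$-th order divergence-free vector fields, schematically
\[
(I_k\ast SV)^{**}(s) \;\lesssim\; \int_s^\infty r^{-1+k/n} V^{*}(r)\,\frac{dr}{r}\quad\text{for $s>0$,}
\]
strictly better than what one gets from the boundedness of $I_k$ and $S$ alone, and in the Lebesgue setting going back to Bourgain--Brezis. Combined with Hardy's lemma \eqref{hardy} and the one-dimensional inequality \eqref{sobolev3} of (iii), this would give
\[
\|u\|_{Y(\rn,\R^\ell)} \;\lesssim\; \Bigl\|\int_s^\infty r^{-1+k/n}V^{*}(r)\,\tfrac{dr}{r}\Bigr\|_{Y(0,\infty)} \;\leq\; c_3 \|V\|_{X(\rn,\R^M)} \;\lesssim\; c_3\,\|\mathcal A_k(D)u\|_{X(\rn,\rM)},
\]
first for sufficiently regular $u$ and then, by a standard mollification using \eqref{nov1}--\eqref{nov2} together with the decay condition built into the definition of $V^{\mathcal A_k}_\circ X$, for arbitrary admissible $u$. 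The tracking of constants then matches the claimed dependencies.

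The hard parts will be twofold. First, the symbol-level construction of $L$ (and of the reconstructing singular integral $S$) must be arranged so that $V=L\,\mathcal A_k(D)u$ is genuinely divergence-free of the right order, that $S$ is genuinely of Calder\'on--Zygmund type, and that the whole representation is well-defined on the class $V^{\mathcal A_k}_\circ X$; this is the place where ellipticity and cancellation enter jointly and cannot be separated. Second, the rearrangement estimate of Theorem \ref{K-CZ} rests on the delicate identification of the $K$-functional for the couple $(L^1,L^{p,q})$ restricted to divergence-free fields provided by Theorem \ref{lemma2-higherorder}, itself building on the higher-order Helmholtz projection analysis of Lemma \ref{lemma1-higherorder} and on Bourgain--Brezis--Pisier-type cancellation phenomena. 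Lifting the resulting inequality from the Lebesgue setting to arbitrary rearrangement-invariant domain and target norms via the $K$-functional identity \eqref{K-v1} will be the technical heart of the proof.
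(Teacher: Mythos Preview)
Your overall architecture matches the paper's: the cycle $\mathrm{(i)}\Rightarrow\mathrm{(ii)}\Rightarrow\mathrm{(iii)}\Rightarrow\mathrm{(i)}$, with the first implication via the pointwise bound $|\mathcal A_k(D)u|\lesssim|\nabla^ku|$, the second via \cite{mihula}, and the third via a representation $u=\sum_\beta I_kT^\beta L_\beta\mathcal A_k(D)u$ in which $L_\beta\mathcal A_k(D)u$ is $\operatorname{div}_{k'}$-free (coming from the co-canceling annihilator of \cite[Prop.~4.2]{VS3}), followed by Theorem~\ref{K-CZ}. That is exactly what the paper does.

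There is, however, one genuine gap in your passage from Theorem~\ref{K-CZ} to the norm inequality. You state the content of Theorem~\ref{K-CZ} as a pointwise bound
\[
u^{**}(s)\;\lesssim\;\int_s^\infty r^{-1+k/n}\,V^*(r)\,dr,
\]
and then plan to invoke Hardy's lemma~\eqref{hardy} directly. But Theorem~\ref{K-CZ} does \emph{not} yield such a pointwise estimate; what it gives is the weighted integral comparison~\eqref{nov100},
\[
\int_0^t s^{-k/n}\,u^*(s)\,ds\;\lesssim\;\int_0^t s^{-k/n}\!\int_s^\infty r^{-1+k/n}\,V^*(r)\,dr\,ds,
\]
which is strictly weaker (it is equivalent to the $K$-functional inequality~\eqref{dec32second} for the couple $(L^{n/(n-k),1},L^\infty)$, not to a $(\cdot)^{**}$ bound). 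Hardy's lemma~\eqref{hardy} is therefore not the right tool; the paper instead invokes Theorem~A (the Kerman--Pick lemma), which is precisely the statement that an inequality of the form~\eqref{nov100} together with hypothesis~(iii) yields $\|u\|_Y\lesssim\|V\|_X$. This is a nontrivial additional ingredient that you should name explicitly. (As a minor point, your displayed inequality also carries an extra $dr/r$; the integrand in \eqref{sobolev3} is $r^{-1+k/n}f(r)\,dr$.)

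A second, smaller remark: the paper does not pass to general $u$ by mollifying at the end of the argument. It first establishes the representation formula~\eqref{representation} for all $u\in V_\circ^{\mathcal A_k}(L^1+L^{n/k,1})$ (mollification is used only to reduce the representation formula to the smooth case, followed by a cutoff argument exploiting the decay built into the $\circ$-space), and the rearrangement inequality~\eqref{nov140} then holds directly for such $u$; the norm estimate follows from Theorem~A without further approximation.
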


The following description of the optimal rearrangement-invariant target space in Sobolev inequalities for elliptic canceling operators can be derived from Theorem \ref{characterization}, via \cite[Theorem~2.1]{mihula}.
 The optimal space in question, $X_k(\mathbb R^n, \mathbb R^\ell)$, is defined via its associate   space $X_k'(\mathbb R^n, \mathbb R^m)$.  The latter  is built upon the function norm defining   obeying
\begin{align}
    \label{may7}
    \|f\|_{X_
    k'(0, \infty)}= \|s^{\frac k n} f^{**}(s)\|_{X'(0,\infty)}
\end{align}
for $f\in \Mpl (0, \infty)$. Here, $\|\,\cdot \,\|_{X'(0,\infty)}$ denotes the function norm which defines the   associate space of $X(\mathbb R^n, \mathbb R^m)$. The right-hand side of \eqref{may7} is a rearrangement-invariant function norm provided that
\begin{equation}\label{may8}
    \|(1+r)^{-1+\frac{k}{n}}\|_{X'(0,\infty)}< \infty,
\end{equation}
see \cite[Theorem 4.4]{EMMP}.   Notice that the condition \eqref{may8} is necessary for an inequality of the form \eqref{sobolev3} to hold whatever the rearrangement-invariant space $Y(\rn, \rM)$ is. This follows analogously to \cite[Equation (2.2)]{EGP}.
%\\
%Thanks to \cite[Theorem~4.4]{EMMP}, the inequality \eqref{sobolev3} holds with $Y(0,\infty)= X_k (0, \infty)$. 

\begin{theorem}[{\bf Optimal target space in Sobolev inequalities  in $\rn$}]\label{sobolev-opt-canc}   Let $n, m, \ell, k$, $\mathcal A_k(D)$ and \,  $\|\cdot\|_{X(0,\infty)}$ be as in Theorem \ref{characterization}.  
Assume, in addition, that   $\|\cdot\|_{X(0,\infty)}$ satisfies the assumption \eqref{may8}.
Then,   
 $$V^{\mathcal A_k}_{\circ} X(\rn, \mathbb R^\ell) \to X_k(\mathbb R^n, \mathbb R^\ell)$$ and
there exists a constant $c=c(n, m, k, \mathcal A_k(D))$ such that
\begin{equation}\label{sobolev-bound-opt}
\|u\|_{X_k(\mathbb R^n, \mathbb R^\ell)} \leq c \|\mathcal A_k(D)u\|_{X(\rn, \mathbb R^m)}
\end{equation} 
for every  $u \in V^{\mathcal A_k}_{\circ} X(\rn, \mathbb R^\ell)$.
\\ The assumption \eqref{may8} is necessary for an inequality of the form \eqref{sobolev1} to hold, whatever $Y(\mathbb R^n, \mathbb R^\ell)$ is. Moreover,
 the target space $X_k(\mathbb R^n, \mathbb R^\ell)$ is optimal (smallest possible) in \eqref{sobolev1} among all rearrangement-invariant spaces.
\end{theorem}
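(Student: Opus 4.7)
The plan is to derive all three assertions (validity of the inequality, necessity of \eqref{may8}, and optimality of the target) from the reduction principle of Theorem \ref{characterization}, combined with the sharp one-dimensional Hardy-type result of \cite{mihula}. The key observation is that once the $n$-dimensional Sobolev inequality for $\mathcal A_k(D)$ has been equivalently recast as the one-dimensional inequality \eqref{sobolev3}, the problem of pinpointing the smallest rearrangement-invariant target in \eqref{sobolev1} becomes the (already solved) problem of pinpointing the smallest rearrangement-invariant target for the one-dimensional Hardy operator $f\mapsto \int_s^\infty r^{-1+k/n}f(r)\,dr$.

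First, under the assumption \eqref{may8}, the functional defined by \eqref{may7} is a rearrangement-invariant function norm by \cite[Theorem 4.4]{EMMP}, so $X_k(\rn,\R^\ell)$ is a genuine rearrangement-invariant space. By \cite[Theorem 2.1]{mihula}, the Hardy-type inequality \eqref{sobolev3} holds with $Y(0,\infty)=X_k(0,\infty)$, with a constant depending only on $n$ and $k$. Feeding this into the implication (iii)$\Rightarrow$(i) of Theorem \ref{characterization} yields the embedding $V^{\mathcal A_k}_{\circ} X(\rn, \mathbb R^\ell) \to X_k(\rn, \mathbb R^\ell)$ together with the bound \eqref{sobolev-bound-opt}; tracking the constants through Theorem \ref{characterization} shows that the final constant depends only on $n,m,k$ and $\mathcal A_k(D)$.

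For the necessity of \eqref{may8} and the optimality of $X_k(\rn,\R^\ell)$, assume that $Y(\rn,\R^\ell)$ is any rearrangement-invariant space such that \eqref{sobolev1} holds for every $u\in V^{\mathcal A_k}_{\circ}X(\rn,\R^\ell)$. The implication (i)$\Rightarrow$(iii) of Theorem \ref{characterization} then provides the one-dimensional Hardy inequality \eqref{sobolev3} for the pair $(X,Y)$. On the one hand, testing \eqref{sobolev3} with $f=\chi_{(0,\tau)}$ and letting $\tau\to\infty$, or equivalently duality against the kernel $r\mapsto (1+r)^{-1+k/n}$ as done in \cite[Equation (2.2)]{EGP}, forces $\|(1+r)^{-1+k/n}\|_{X'(0,\infty)}<\infty$, which is precisely \eqref{may8}. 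On the other hand, by the sharpness part of \cite[Theorem 2.1]{mihula}, $X_k(0,\infty)$ is the smallest rearrangement-invariant target in \eqref{sobolev3}, hence $X_k(0,\infty)\to Y(0,\infty)$, and consequently $X_k(\rn,\R^\ell)\to Y(\rn,\R^\ell)$.

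I do not expect a substantive obstacle in this argument, since the essential work has already been performed in Theorem \ref{characterization} and in the cited one-dimensional results. The only point deserving care is the bookkeeping of constants under the reduction principle, to ensure that $c$ in \eqref{sobolev-bound-opt} depends solely on the declared parameters $n,m,k,\mathcal A_k(D)$ and not on the particular norm $X$; this follows from the statement of Theorem \ref{characterization} itself, which asserts that the constant $c_1$ therein depends only on $c_3$ and $\mathcal A_k(D)$, combined with the fact that the Hardy constant furnished by \cite[Theorem 2.1]{mihula} is an absolute function of $n$ and $k$.
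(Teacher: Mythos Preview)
Your proposal is correct and follows essentially the same route as the paper: obtain the Hardy inequality \eqref{sobolev3} with $Y=X_k$ from the cited literature and then invoke the implication (iii)$\Rightarrow$(i) of Theorem \ref{characterization}. The paper's proof is terser---it cites \cite[Theorem~4.4]{EMMP} rather than \cite[Theorem~2.1]{mihula} for the Hardy inequality and does not spell out the necessity and optimality arguments in the proof itself---but your more detailed treatment of these two points via (i)$\Rightarrow$(iii) and the sharpness statement in \cite{mihula} is exactly what the paper intends, as indicated in the text preceding the theorem.
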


A version of Theorem \ref{characterization} for embeddings of the spaces $V^{\mathcal A_k}_0X(\Omega, \mathbb{R}^\ell)$ for domains $\Omega$ with finite measure reads as follows.

\begin{corollary}[{\bf Reduction principle for Sobolev inequalities  in domains}]
\label{characterization-omega} 
Let $n, m, \ell, k$ and $\mathcal A_k(D)$  be as in Theorem \ref{characterization}. 
Assume that $\Omega$ is an open set in $\rn$ with $|\Omega|<\infty$ and let
 $\|\cdot\|_{X(0,|\Omega|)}$ and $\|\cdot\|_{Y(0,|\Omega|)}$ be rearrangement-invariant function norms.
%
%$X(\Omega, \mathbb R^m)$ and {\color{blue}$Y(\Omega, \mathbb R^\ell)$ are rearrangement-invariant spaces.
  The following facts are equivalent:
\\ (i) The embedding $V^{\mathcal A_k}_0 X(\Omega, \mathbb{R}^\ell) \to Y(\Omega, \R^\ell)$ holds, i.e. there exists a constant $c_1$ such that
\begin{equation}\label{sobolev1om}
\|u\|_{Y(\Omega, \R^\ell)} \leq c_1 \|\mathcal A_k(D)u\|_{X(\Omega, \rM)}
\end{equation}
 for every $u \in V^{\mathcal A_k}_0 X(\Omega, \mathbb{R}^\ell)$. 
 %\todo[inline]{A:  a sufficient additional condition  for the inequality \eqref{sobolev1}  to hold in the classical case when $A(D)u= \nabla ^ku$ is that $u$ have compact support. Assuming that the level sets of $|\nabla ^{j}u|$ have finite measure for $j=0, 1, \dots , k-1$ is still sufficient. We need some assumption also here, to rule out the addition of polynomials of degree not exceeding $k-1$. This is related to the validity of the representation formula \eqref{representation}, which should require some decay condition on $u$ }
\\ (ii) The embedding 
$V^{k}_0 X(\Omega, \R^{\ell}) \to Y(\Omega, \R^\ell)$ holds, i.e.
there exists a constant $c_2$ such that
\begin{equation}\label{sobolev2om}
\|u\|_{Y(\Omega, \R^\ell)} \leq c_2 \|\nabla ^ku\|_{X(\Omega, \R^{\ell \times n^k})}
\end{equation}
 for every $u \in V^{k}_0 X(\Omega, \R^{\ell})$.
 \\
 (iii)
There exists a constant $c_3$ such that
\begin{equation}\label{sobolev3om}
\bigg\|\int_s^{|\Omega|} r^{-1+\frac {k}{n}}f(r)\, dr \bigg\|_{Y(0,|\Omega|)} \leq c_3 \|f\|_{X(0, |\Omega|)}
\end{equation}
for every $f \in X(0,|\Omega|)$.
\\
%\todo[inline]{A: dependence of the constants on $|\Omega|$ to be checked}
Moreover, the constant $c_1$ depends only on $c_3$ and $\mathcal A_k(D)$,    while the constant $c_3$ depends only on $c_2$, $k$, $n$ and $m$.
\end{corollary}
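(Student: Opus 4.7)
The plan is to reduce Corollary \ref{characterization-omega} to Theorem \ref{characterization} through the two extension devices introduced in Section \ref{back}: the zero-extension of functions from $\Omega$ to $\rn$ and the canonical extension $X^e$, $Y^e$ of rearrangement-invariant function norms from $(0,|\Omega|)$ to $(0,\infty)$ defined via \eqref{sep25}--\eqref{extbis}. First, I would record the basic extension identities. For $u\in V^{\mathcal A_k}_0X(\Omega,\R^\ell)$, the zero extension $\tilde u$ to $\rn$ belongs to $V^{\mathcal A_k}_\circ X^e(\rn,\R^\ell)$: the $(k-1)$-times weak differentiability on $\rn$ is built into the definition \eqref{X0}, the decay requirement of \eqref{XAD} is trivial since $\tilde u$ has bounded support, and $\mathcal A_k(D)\tilde u$ coincides distributionally with the zero extension of $\mathcal A_k(D)u$. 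Invoking \eqref{extbis}, one has
\[
\|\tilde u\|_{Y^e(\rn,\R^\ell)}=\|u\|_{Y(\Omega,\R^\ell)}\quad\text{and}\quad \|\mathcal A_k(D)\tilde u\|_{X^e(\rn,\R^m)}=\|\mathcal A_k(D)u\|_{X(\Omega,\R^m)}.
\]

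The easy implications come first. (i) $\Rightarrow$ (ii) holds because the components of $\mathcal A_k(D)u$ are linear combinations of partial derivatives of order $k$, so there is a constant $C=C(\mathcal A_k(D))$ with $|\mathcal A_k(D)u(x)|\le C|\nabla^k u(x)|$ pointwise; then $\|\mathcal A_k(D)u\|_{X(\Omega,\R^m)}\le C\|\nabla^k u\|_{X(\Omega,\R^{\ell\times n^k})}$, and \eqref{sobolev1om} yields \eqref{sobolev2om} with constant $Cc_1$. The remaining classical equivalence (ii) $\Leftrightarrow$ (iii) for the standard operator $\nabla^k$ on bounded domains is the Kerman--Pick reduction principle of \cite{KermanPick}, valid in the present generality by \cite{cianchi-pick-slavikova}.

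The principal task is therefore (iii) $\Rightarrow$ (i), for which I would apply the proof (not merely the statement) of Theorem \ref{characterization} to $\tilde u$. That proof, via the representation formula outlined in the Introduction, produces a pointwise rearrangement estimate for $\tilde u$ whose right-hand side is an integral operator of the form $\int_s^\infty r^{-1+k/n}(\mathcal A_k(D)\tilde u)^*(r)\,\tfrac{dr}{r}$, and then concludes by invoking the $\rn$-Hardy inequality \eqref{sobolev3} with $f=(\mathcal A_k(D)\tilde u)^*$. Crucially, $\mathcal A_k(D)\tilde u$ is supported in $\Omega$, so $(\mathcal A_k(D)\tilde u)^*$ vanishes on $(|\Omega|,\infty)$: the integral on $(0,\infty)$ collapses to one on $(0,|\Omega|)$, and by \eqref{sep25} the $Y^e$-norm on $(0,\infty)$ of any non-negative decreasing function supported in $(0,|\Omega|)$ agrees with its $Y$-norm on $(0,|\Omega|)$. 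The hypothesis (iii) then supplies precisely the bound $\|\tilde u\|_{Y^e(\rn,\R^\ell)}\le c_1\|\mathcal A_k(D)\tilde u\|_{X^e(\rn,\R^m)}$ with $c_1$ depending on $c_3$ and $\mathcal A_k(D)$, and the extension identities above translate this to \eqref{sobolev1om}.

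The main obstacle is the bookkeeping verification that the proof of Theorem \ref{characterization} invokes \eqref{sobolev3} only for $f$ of the form $(\mathcal A_k(D)u)^*$, which in our setting is supported in $(0,|\Omega|)$ and hence within the reach of (iii). A cleaner alternative that avoids reopening the proof of Theorem \ref{characterization} would be to construct auxiliary rearrangement-invariant norms $\widehat X,\widehat Y$ on $(0,\infty)$ satisfying (a) the full Hardy inequality \eqref{sobolev3} with constant controlled by $c_3$, and (b) equality with $X^e$, $Y^e$ on non-negative decreasing functions supported in $(0,|\Omega|)$; Theorem \ref{characterization} applied to $\widehat X,\widehat Y$ would then deliver the desired bound on $\tilde u$ thanks to the support property of $(\mathcal A_k(D)\tilde u)^*$.
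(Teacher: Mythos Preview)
Your proposal is correct and follows essentially the same route as the paper: (i)$\Rightarrow$(ii) via the pointwise bound $|\mathcal A_k(D)u|\le C|\nabla^k u|$; (ii)$\Rightarrow$(iii) by the Kerman--Pick reduction principle; and (iii)$\Rightarrow$(i) by extending $u$ by zero to $\rn$, passing to the norms $X^e,Y^e$, and invoking Theorem \ref{characterization}. The paper carries out exactly this extension argument, verifying the Hardy inequality for $X^e,Y^e$ only for $f$ supported in $[0,|\Omega|]$ (its chain \eqref{sep28}) and then applying Theorem \ref{characterization} to $u^e$---so the technical point you flag, that the proof of Theorem \ref{characterization} is needed only for such $f$ (since $(\mathcal A_k(D)u^e)^*$ vanishes beyond $|\Omega|$), is present in the paper as well, just left implicit. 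Your explicit acknowledgment of this bookkeeping, and your alternative suggestion of building auxiliary norms $\widehat X,\widehat Y$ for which the full Hardy inequality holds, are welcome clarifications rather than a departure from the paper's strategy.
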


Combining Corollary \ref{characterization-omega} with \cite[Proposition 5.2]{KermanPick} provides us with
the characterization of an optimal rearrangement-target space  for embeddings of any Sobolev space $V^{\mathcal A_k}_0X(\Omega, \mathbb{R}^\ell)$.
The optimal space in question is denoted by $X_k (\Omega, \R^m)$ and defined, in analogy with \eqref{may7}, as the rearrangement invariant space whose associate norm is given via the function norm
\begin{align}
    \label{may7-domain}
    \|f\|_{X_
    k'(0, |\Omega|)}= \|s^{\frac k n} f^{**}(s)\|_{X'(0,|\Omega|)}
\end{align}
for $f\in \Mpl (0, |\Omega|)$.
Notice that no additional assumption like \eqref{may8} is required in this case.
%\\ Since,
% by \cite[Theorems 2.1 and 2.3]{mihula}, the inequality \eqref{bound1-domain} holds with $Y(0,|\Omega|)= X_\alpha (0, |\Omega|)$, the following result is a consequence of Corollary \ref{sobolev-domain}.

\begin{corollary}[{\bf Optimal target space in Sobolev inequalities  in domains}]\label{sobolev-opt-canc-domain}   Let $n, m, \ell, k$, $\mathcal A_k(D)$, $\Omega$,   $\|\cdot\|_{X(0,|\Omega|)}$, and $\|\cdot\|_{X_k(0,|\Omega|)}$ 
 be as in Corollary \ref{characterization-omega}.  
Then,    
$$V^{\mathcal A_k}_{0} X(\Omega, \mathbb{R}^\ell) \to X_k(\Omega, \mathbb R^\ell)$$  and
there exists a constant $c=c(n, m, k, \mathcal A_k(D))$ such that
\begin{equation}\label{sobolev-bound-opt-om}
\|u\|_{X_k(\Omega, \mathbb R^\ell)} \leq c \|\mathcal A_k(D)u\|_{X(\Omega, \mathbb R^m)}
\end{equation} 
for every  $u \in V^{\mathcal A_k}_{0} X(\Omega, \mathbb{R}^\ell)$.
\\ Moreover,
 the target space $X_k(\Omega, \mathbb R^\ell)$ is optimal (smallest possible) in \eqref{sobolev-bound-opt-om} among all rearrangement-invariant spaces.
\end{corollary}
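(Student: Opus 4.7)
The plan is to derive the corollary directly from the reduction principle in Corollary~\ref{characterization-omega} together with the known sharpness result for the Hardy type inequality \eqref{sobolev3om}. First I would invoke \cite[Proposition~5.2]{KermanPick}, which asserts that for the one-dimensional Hardy type inequality
\begin{equation*}
\bigg\|\int_s^{|\Omega|} r^{-1+\frac{k}{n}} f(r)\, dr\bigg\|_{Y(0,|\Omega|)} \leq c \,\|f\|_{X(0,|\Omega|)} \qquad \text{for every } f\in X(0,|\Omega|),
\end{equation*}
the smallest rearrangement-invariant function norm $\|\cdot\|_{Y(0,|\Omega|)}$ making it hold is precisely $\|\cdot\|_{X_k(0,|\Omega|)}$, defined by duality through \eqref{may7-domain}. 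In particular, the Hardy inequality is valid when $Y(0,|\Omega|) = X_k(0,|\Omega|)$.

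Next, I would apply the equivalence (i)$\Leftrightarrow$(iii) of Corollary~\ref{characterization-omega} with this choice $Y(0,|\Omega|) = X_k(0,|\Omega|)$. The Hardy inequality from the previous paragraph yields the embedding
\begin{equation*}
V^{\mathcal A_k}_{0} X(\Omega,\mathbb{R}^\ell) \to X_k(\Omega,\mathbb{R}^\ell),
\end{equation*}
together with the bound \eqref{sobolev-bound-opt-om} with a constant $c$ depending only on $n,m,k,\mathcal A_k(D)$, since the constant produced by Corollary~\ref{characterization-omega} depends only on the Hardy constant (which in turn depends only on $k,n$ and $|\Omega|$ scaling) and on the operator $\mathcal A_k(D)$.

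For optimality, I would argue by contradiction. Suppose $\widetilde Y(\Omega,\mathbb{R}^\ell)$ is a rearrangement-invariant space, strictly smaller than $X_k(\Omega,\mathbb{R}^\ell)$, for which the inequality $\|u\|_{\widetilde Y(\Omega,\mathbb{R}^\ell)} \leq c\|\mathcal A_k(D)u\|_{X(\Omega,\mathbb{R}^m)}$ holds for every $u \in V^{\mathcal A_k}_{0} X(\Omega,\mathbb{R}^\ell)$. By the implication (i)$\Rightarrow$(iii) of Corollary~\ref{characterization-omega}, the Hardy inequality \eqref{sobolev3om} would then hold with $Y(0,|\Omega|) = \widetilde Y(0,|\Omega|)$, contradicting the optimality of $X_k(0,|\Omega|)$ furnished by \cite[Proposition~5.2]{KermanPick}. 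Hence $X_k(\Omega,\mathbb{R}^\ell)$ is indeed the smallest admissible rearrangement-invariant target.

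The only subtlety I anticipate is bookkeeping: one has to be sure that the sharpness statement of \cite[Proposition~5.2]{KermanPick}, originally formulated for the Sobolev inequality for $\nabla^k$, transfers to the Hardy inequality \eqref{sobolev3om} at the level of rearrangement-invariant function norms, and that the equivalence in Corollary~\ref{characterization-omega} preserves this sharpness in both directions. Both are immediate from the reduction principle, so no essential new obstacle arises.
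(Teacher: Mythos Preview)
Your proposal is correct and follows essentially the same route as the paper: establish the Hardy inequality \eqref{sobolev3om} with $Y(0,|\Omega|)=X_k(0,|\Omega|)$ by citing the known optimality result for that one-dimensional inequality, then invoke the reduction principle of Corollary~\ref{characterization-omega} in both directions to obtain the embedding and its sharpness. The paper's written proof is terser (and cites \cite[Theorem~4.5]{EKP} rather than \cite[Proposition~5.2]{KermanPick}, though the text preceding the corollary does point to the latter), and it does not spell out the optimality argument; your explicit contrapositive for sharpness via (i)$\Rightarrow$(iii) is exactly what is implicit there.
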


The  theorems  above can be used  to derive a  number of new inequalities in families of rearrangement-invariant spaces. They include, for instance, Zygmund spaces and, more generally, Orlicz spaces and  Lorentz-Zygmund spaces.

\par We begin with Orlicz domain and target spaces. Given 
  a~Young function $A$ such that
\begin{equation}\label{conv0}
	\int_{0}\left(\frac{t}{A(t)}\right)^{\frac{k}{n-k}}\,dt < \infty,
\end{equation}	
let 
 $A_{\frac{n}{k}}$ be its Sobolev conjugate defined as
\begin{equation}\label{An}
A_{\frac{n}{k}} (t) = A(H^{-1}(t)) \quad \text{for $t\geq 0$,}
\end{equation}
where
\begin{equation}\label{H}
H(t) = \bigg(\int _0^t \bigg(\frac \tau{A(\tau)}\bigg)^{\frac
{k}{n-k}} d\tau\bigg)^{\frac {n-k}n} \quad \text{for $t \geq0$.}
\end{equation}
The Young function $A_{\frac{n}{k}}$  defines the optimal Orlicz target space for embeddings of  $k$-th order Orlicz-Sobolev spaces defined via the full $k$-th order gradient operator $\nabla ^k$.
This is shown in  \cite{cianchi_CPDE} (see also \cite{cianchi_IUMJ} for an alternate equivalent formulation) for $k=1$ and in \cite{cianchi_forum} for arbitrary $k$. Analogous results for fractional-order Orlicz-Sobolev spaces are established in \cite{ACPS, ACPS_NA}. The next result ensures that the same conclusion holds for any $k$-order elliptic canceling operator.
\iffalse
From Theorem \ref{characterization} and Corollary \ref{characterization-omega} one can deduce, via \cite[Inequality (2.7)]{cianchi_CPDE}, that the same target space is admissible for elliptic canceling operator.
\fi

%The next  two results deal with optimal target spaces for embeddings of Orlicz-Sobolev spaces associated with elliptic canceling operators. The first one concerns optimal Orlicz targets.

\begin{theorem}[{\bf Optimal Orlicz-Sobolev inequalities}]\label{sobolev-orlicz-canc} 
  Let $n,m,\ell,k$ and $\mathcal A_k(D)$ be as in Theorem \ref{characterization}.
\\ (i) Assume that $A$ is a Young function fulfilling the condition \eqref{conv0} with $\alpha =k$. Let   $A_{\frac nk}$ be the Young function defined as in \eqref{An}.
 Then, $$V^{\mathcal A_k}_\circ L^A(\rn, \mathbb{R}^\ell) \to L^{A_{\frac nk}}(\mathbb R^n, \mathbb R^\ell)$$ and
there exists a constant $c=c(\mathcal A_k(D))$ such that
\begin{equation}\label{bound-orlicz-canc}
\|u \|_{L^{A_{\frac nk}}(\mathbb R^n, \mathbb R^\ell)} \leq c \|\mathcal A_k(D)u\|_{L^A(\rn, \mathbb R^m)}
\end{equation}
for  every  $u \in V^{\mathcal A_k}_\circ L^A(\rn, \mathbb{R}^\ell)$.
\\ (ii) Assume that $\Omega$ is an open set in $\rn$ such that $|\Omega|<\infty$. Let   $A_{\frac nk}$ be a Young function defined as in \eqref{An} with $A$ modified, if necessary, near $0$ in such a way that the condition \eqref{conv0} is fulfilled. Then,
$$V^{\mathcal A_k}_0 L^A(\Omega, \mathbb{R}^\ell) \to L^{A_{\frac nk}}(\Omega, \mathbb R^\ell)$$
and
there exists a constant $c=c(A, \mathcal A_k(D), |\Omega|)$ such that
\begin{equation}\label{bound-orlicz-domain-canc}
\|u\|_{L^{A_{\frac nk}}(\Omega, \mathbb R^\ell)} \leq c \|\mathcal A_k(D)u\|_{L^A(\Omega, \mathbb R^m)}
\end{equation}
for  every  $u \in V^{\mathcal A_k}_0 L^A(\Omega, \mathbb{R}^\ell)$.
\\ In both inequalities \eqref{bound-orlicz-canc} and \eqref{bound-orlicz-domain-canc}, the target space is optimal (smallest possible) among all Orlicz spaces.
\end{theorem}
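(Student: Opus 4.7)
The plan is to derive Theorem \ref{sobolev-orlicz-canc} as a direct corollary of the reduction principle in Theorem \ref{characterization} (and its finite-measure counterpart Corollary \ref{characterization-omega}), combined with the classical Orlicz-Sobolev embedding theorem for the full gradient operator $\nabla^k$. In view of the equivalences established in Theorem \ref{characterization}, for part (i) it suffices to verify either the one-dimensional Hardy inequality \eqref{sobolev3} or the Sobolev inequality \eqref{sobolev2} with $X(\rn, \R^{\ell \times n^k})=L^A(\rn, \R^{\ell \times n^k})$ and $Y(\rn, \R^\ell) = L^{A_{\frac nk}}(\rn, \R^\ell)$. The latter is exactly the Orlicz-Sobolev embedding for $\nabla^k$ proved in \cite{cianchi_CPDE} for $k=1$ and in \cite{cianchi_forum} for arbitrary $k$. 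Invoking the implication (ii)$\Rightarrow$(i) of Theorem \ref{characterization} then transfers the inequality to any $k$-th order elliptic canceling operator $\mathcal A_k(D)$, with a constant depending only on the operator, yielding \eqref{bound-orlicz-canc}.

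For the optimality of $L^{A_{\frac nk}}$ among Orlicz targets, the argument is by contradiction. Suppose there exists a Young function $B$ such that $L^B(\rn, \R^\ell) \subsetneq L^{A_{\frac nk}}(\rn, \R^\ell)$ and the embedding $V^{\mathcal A_k}_{\circ} L^A(\rn, \R^\ell) \to L^B(\rn, \R^\ell)$ holds. Applying the implication (i)$\Rightarrow$(ii) of Theorem \ref{characterization} in the reverse direction would produce the embedding $V^k_\circ L^A(\rn, \R^\ell) \to L^B(\rn, \R^\ell)$ for the standard operator $\nabla^k$, contradicting the optimality of $L^{A_{\frac nk}}$ among Orlicz targets proved in \cite{cianchi_CPDE, cianchi_forum}. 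Thus $L^{A_{\frac nk}}$ is the smallest Orlicz space making \eqref{bound-orlicz-canc} true.

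Part (ii), concerning open sets $\Omega$ with $|\Omega|<\infty$, follows by exactly the same scheme, using Corollary \ref{characterization-omega} in place of Theorem \ref{characterization} and the finite-measure version of the Orlicz-Sobolev embedding from the same references. The modification of $A$ near $0$, which is permissible since only the behaviour of $A$ near infinity is relevant for the norm on sets of finite measure (by the equivalence of Orlicz norms described in Section \ref{back}), ensures that the condition \eqref{conv0} required in the definition of $A_{\frac nk}$ is fulfilled, so that the Sobolev conjugate is well-defined. The optimality statement in this setting again follows by the same contradiction argument, transferring optimality from $\nabla^k$ to $\mathcal A_k(D)$ through Corollary \ref{characterization-omega}.

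The main conceptual work has already been carried out in establishing Theorem \ref{characterization}; the present theorem is essentially a translation of known optimal Orlicz-Sobolev embeddings into the canceling-operator framework. The only step requiring attention is verifying that the modification of $A$ near $0$ in part (ii) does not alter the Orlicz space on $\Omega$ (by the global/near-infinity equivalence of Young functions recalled around \eqref{2024-211}) and that the constants $c$ in the theorem indeed depend only on the stated parameters, which is automatic from the explicit dependencies tracked in Theorem \ref{characterization} and Corollary \ref{characterization-omega}.
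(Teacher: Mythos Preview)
Your proposal is correct and follows essentially the same route as the paper: both derive the theorem from the reduction principle (Theorem~\ref{characterization} and Corollary~\ref{characterization-omega}) together with the known optimal Orlicz--Sobolev results from \cite{cianchi_CPDE, cianchi_IUMJ, cianchi_forum}. The only cosmetic difference is that the paper verifies condition~(iii) of Theorem~\ref{characterization} directly, citing the one-dimensional Hardy-type inequality from \cite[Inequality~(2.7)]{cianchi_CPDE} (with the $A$-independent constant when $L=\infty$) and the Orlicz optimality at that level from \cite{cianchi_IUMJ}, whereas you pass through condition~(ii) and transfer optimality via (i)$\Rightarrow$(ii); since the three conditions are equivalent with explicitly tracked constants, the two arguments are interchangeable.
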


\begin{remark}
    \label{linfinity}
   {\rm  If $A$ grows  so fast   near infinity that
\begin{equation}\label{convinf}
	\int^\infty\left(\frac{t}{A(t)}\right)^{\frac{k}{n-k}}\,dt < \infty,
\end{equation}	
then $A_{\frac nk}(t)=\infty$ for large $t$. Hence, $L^{A_{\frac nk}}(\rn, \mathbb R^\ell)\to L^\infty (\rn, \mathbb R^\ell)$ and, if $|\Omega|<\infty$, then $L^{A_{\frac nk}}(\Omega, \mathbb R^\ell)= L^\infty (\Omega, \mathbb R^\ell)$, up to equivalent norms. Therefore, the inequality \eqref{bound-orlicz-canc}
implies that
\begin{equation}\label{bound-orlicz-inf}
\|u \|_{L^{\infty}(\mathbb R^n, \mathbb R^\ell)} \leq c \|\mathcal A_k(D)u\|_{L^A(\rn, \mathbb R^m)}
\end{equation}
for  every  $u \in V^{\mathcal A_k}_\circ L^A(\rn, \mathbb{R}^\ell)$. A parallel inequality for functions in $V^{\mathcal A_k}_\circ L^A(\Omega, \mathbb{R}^\ell)$ follows from \eqref{bound-orlicz-domain-canc}.}
\end{remark}

\begin{example}\label{ex1}{\rm
Consider a Young function $A$ such that
\begin{equation}\label{june3}
A(t) \,\,\simeq\,\, \begin{cases} t^{p_0} (\log \frac 1t)^{r_0} & \quad \text{near zero}
\\
t^p  (\log t)^r & \quad \text{near infinity,}
\end{cases}
\end{equation}
where either $p_0>1$ and $r_0 \in \R$, or $p_0=1$ and $r_0 \leq 0$, and either $p>1$ and $r \in \R$, or $p=1$ and $r \geq 0$.  
\\  The function $A$ satisfies the assumption \eqref{conv0} if
\begin{equation}\label{june4}
\text{either $1\leq p_0< \frac nk$ and $r_0$ is as above, or $p_0=\frac nk$ and $r_0 > \frac nk -1$.}
\end{equation}
\\ Theorem \ref{sobolev-orlicz-canc}  tells us that the inequality \eqref{bound-orlicz-canc} holds,  where
\begin{equation}\label{june5}
A_{\frac nk}(t)\,\,  \simeq\,\, \begin{cases} t^{\frac {n{p_0}}{n-k{p_0}}} (\log \frac 1t)^{\frac {nr_0}{n-k{p_0}}} & \quad \text{ if $1\leq {p_0}< \frac nk$ }
\\
e^{-t^{-\frac{n}{k(r_0 +1)-n}}} & \quad  \text{if ${p_0}=\frac nk$ and $r_0 > \frac nk -1$}
\end{cases} \quad \text{near zero,}
\end{equation}
and
\begin{equation}\label{dec256}
A_{\frac nk}(t) \,\, \simeq \,\, \begin{cases} t^{\frac {np}{n-k p}} (\log t)^{{\color{black}\frac {n r  }{n-k p}}} & \quad  \text{ if $1\leq p< \frac nk$ }
\\
e^{t^{\frac{n}{n-(r +1)k}}}&  \quad \text{if  $p=\frac nk$ and $r < \frac nk -1$}
\\
e^{e^{t^{\frac n{n-k}}}} &  \quad \text{if  $p=\frac nk$ and $r = \frac nk -1$}
\\
\infty &  \quad \text{otherwise}
\end{cases} \quad \text{near infinity.}
\end{equation} 
In particular, the choice $p_0=p=1$ and $r_0=0$ yields the inequality \eqref{exlog-sob}.  
\\ Analogous conclusions hold with regard to
the Sobolev inequality \eqref{bound-orlicz-domain-canc} on domains. However,  since  $|\Omega|<\infty$,
only the behaviors near infinity of $A$ and $A_{\frac nk}$ displayed above are relevant in this case. In particular, the assumption \eqref{june4} can be dropped.}
\end{example}

\begin{example}\label{ex2}{\rm
Let $A$ be a Young function such that
\begin{equation}\label{sep50}
A(t) \,\,\simeq\,\, \begin{cases} t^{p_0} (\log (\log \frac 1t))^{r_0} & \quad \text{near zero}
\\
t^p  (\log (\log t))^r & \quad \text{near infinity,}
\end{cases}
\end{equation}
where either $p_0>1$ and $r_0 \in \R$, or $p_0=1$ and $r_0 \leq 0$, and either $p>1$ and $r \in \R$ or $p=1$ and $r \geq 0$.  
\\  This function satisfies the assumption \eqref{conv0} if
\begin{equation}\label{sep51}
\text{$1\leq p_0< \frac nk$ and $r_0$ is as above.}
\end{equation}
\\ From Theorem \ref{sobolev-orlicz-canc}  we infer that the inequality \eqref{bound-orlicz-canc} holds,  with
\begin{equation}\label{sep52}
A_{\frac nk}(t)\, \simeq\, t^{\frac {n{p_0}}{n-k{p_0}}} (\log (\log \tfrac 1t))^{\frac {nr_0}{n-k{p_0}}} \quad \quad\text{near zero,}
\end{equation}
and
\begin{equation}\label{sep53}
A_{\frac nk}(t) \,\, \simeq \,\, \begin{cases} t^{\frac {np}{n-k p}} (\log (\log t))^{{\color{black}\frac {n r  }{n-k p}}} & \quad  \text{ if $1\leq p< \frac nk$ }
\\ \\
e^{t^{\frac{n}{n-k}}(\log t)^{\frac {rk}{n-k}}} &  \quad \,\,\text{if  $p=\frac nk$}
\end{cases} \quad \text{near infinity.}
\end{equation} 
For $p_0=p=1$ and $r_0=0$ this results in  the inequality 
\begin{align}
    \label{exloglog}
    \|u\|_{L^{\frac n{n-k}}(\log \log L)^{\frac {nr}{n-k}}(\rn,  \mathbb R^\ell)} \leq c \|\mathcal A_k(D)u\|_{L^1(\log \log L)^{r}(\rn, \rM)}
\end{align}
for some constant $c$ and  every $u \in V^{\mathcal A_k}_\circ L^1(\log \log L)^{r}(\rn, \mathbb{R}^\ell)$.
\\ Conclusions in the same spirit hold for the inequality \eqref{bound-orlicz-domain-canc}, where $|\Omega|<\infty$, with  simplifications analogous to those described in Example \ref{ex1}.}
\end{example}

\medskip

The conclusions of Theorem \ref{sobolev-orlicz-canc}
can still be improved if  Orlicz-Lorentz target spaces, defined as follows, are allowed.
 Let $A$ be  a Young function fulfilling the condition \eqref{conv0} and let $a: [0, \infty) \to [0, \infty)$ be the left-continuous function such that
\begin{align}
    \label{a}
    A(t) = \int_0^ta(\tau)\, d\tau \qquad \text{for $t\geq 0$.}
\end{align}
Denote by  $\widehat A$ the Young function given by
\begin{equation}\label{E:1}
	\widehat A (t)=\int_0^t\widehat a (\tau)\,d\tau\quad\text{for $t\geq 0$},
\end{equation}
where
\begin{equation}\label{E:2}
	{\widehat a\,}^{-1}(r) = \left(\int_{a^{-1}(r)}^{\infty}
		\left(\int_0^t\left(\frac{1}{a(\varrho)}\right)^{\frac{k}{n-k}}\,d\varrho\right)^{-\frac{n}{k}}\frac{dt}{a(t)^{\frac{n}{n-k}}}
				\right)^{\frac{k}{k-n}}
					\quad\text{for $r\ge0$}.
\end{equation}
Given an open set $\Omega \subset  \rn$, 
let $L(\widehat A,\frac{n}{k})(\Omega, \mathbb R^\ell)$ be the  Orlicz-Lorentz space  defined as in \eqref{sep35}. Namely, $L(\widehat A,\frac{n}{k})(\Omega, \mathbb R^\ell)$ is the rearrangement-invariant space associated with the function norm given by
\begin{equation}\label{E:29}
	\|f\|_{L(\widehat A,\frac{n}{k})(0,|\Omega|)}
		= \|r^{-\frac{k}{n}}f^{*}(r)\|_{L^{\widehat A}(0, {|\Omega|})}
\end{equation}
for $f \in \Mpl (0, |\Omega|)$.
%\\ The conclusions of our result about Riesz potential inequalities with Orlicz-Lorentz target spaces are best stated by distinguishing into the cases when the function $A$ fulfils \eqref{convinf} or the complementary condition
%\begin{equation}\label{divinf}
%	\int^\infty\left(\frac{t}{A(t)}\right)^{\frac{k}{n-k}}\,dt =\infty.
%\end{equation}	
The function $A$  always dominates $\widehat A$, and the two functions are equivalent if $A$ grows less than the critical power $t^{\frac ns}$ in the sense of the Matuszewska-Orlicz upper index (see e.g. \cite{cianchi_ibero} for details). In the latter case, $\widehat A$ can thus be simply replaced with $A$ in \eqref{E:29}.
\\
The space $L(\widehat A,\frac{n}{k})(\rn, \mathbb R^\ell)$ characterizes
the optimal  rearrangement-invariant  target space for embeddings of the Sobolev space $V^{\mathcal A_k}_\circ L^A(\rn, \mathbb{R}^\ell)$. This is the content of the next result, which follows from Theorem \ref{characterization}, via \cite[Inequalities (3.1) and (3.2)]{cianchi_ibero}. Its 
 statement takes a different form according to whether the assumption \eqref{convinf} or the complementary assumption 
\begin{equation}\label{divinf}
	\int^\infty\left(\frac{t}{A(t)}\right)^{\frac{k}{n-k}}\,dt = \infty
\end{equation}	
is in force.

\begin{theorem} [{\bf Orlicz-Sobolev inequalities with optimal rearrangement-invariant  target}]\label{sobolev-orliczlorentz-canc} 
  Assume that $n,m, \ell, k$ and $\mathcal A_k(D)$ be as in Theorem \ref{characterization}.
Let $A$ be a Young function fulfilling the condition \eqref{conv0}.
\\ (i) Assume that \eqref{divinf} holds.
Then, 
 $$V^{\mathcal A_k}_{{\circ}} L^A(\rn, \mathbb{R}^\ell) \to L(\widehat A,\tfrac{n}{k})(\rn, \R^\ell)$$
and
there exists a constant $c=c(A, \mathcal A_k(D))$ such that
\begin{equation}\label{bound-orliczlorentz1-canc}
\|u\|_{L(\widehat A,\frac{n}{k})(\rn, \R^\ell)} \leq c \|\mathcal A_k(D)u\|_{L^A(\rn, \mathbb R^m)}
\end{equation}
for every
$u \in V^{\mathcal A_k}_{{\circ}} L^A(\rn, \mathbb{R}^\ell)$.
\\ (ii) Assume that \eqref{convinf} holds. Then, 
$$V^{\mathcal A_k}_{{\circ}} L^A(\rn, \mathbb{R}^\ell) \to {(L^\infty \cap L(\widehat A,\tfrac{n}{k}))(\rn, \mathbb{R}^\ell)}$$
and
there exists a~constant    $c=c(A, \mathcal A_k(D))$ such that
\begin{equation}\label{bound-orliczlorentz2-canc}
\|u\|_{{(L^\infty \cap L(\widehat A,\frac{n}{k}))(\rn, \mathbb{R}^\ell)}} \leq c \|\mathcal A_k(D)u\|_{L^A(\rn, \mathbb R^m)}
\end{equation}
for every
$u \in V^{\mathcal A_k}_{{\circ}} L^A(\rn, \mathbb{R}^\ell)$.
\\ In both the inequalities \eqref{bound-orliczlorentz1-canc} and \eqref{bound-orliczlorentz2-canc} the target space is optimal (smallest possible) among all rearrangement-invariant spaces.
\end{theorem}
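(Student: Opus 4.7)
The plan is to derive Theorem \ref{sobolev-orliczlorentz-canc} as an essentially direct corollary of Theorem \ref{characterization}, turning both Sobolev estimates into one-dimensional Hardy inequalities in $L^A$-norms with Orlicz-Lorentz targets, which are already known from the literature. Specifically, applying the equivalence (i) $\Leftrightarrow$ (iii) of Theorem \ref{characterization} with the domain norm $\|\cdot\|_{X(0,\infty)}=\|\cdot\|_{L^A(0,\infty)}$ reduces the Sobolev inequalities \eqref{bound-orliczlorentz1-canc} and \eqref{bound-orliczlorentz2-canc} to verifying that
\begin{equation*}
\bigg\|\int_s^\infty r^{-1+\frac{k}{n}}f(r)\,dr\bigg\|_{Y(0,\infty)} \leq c\,\|f\|_{L^A(0,\infty)}
\end{equation*}
holds with $Y(0,\infty)=L(\widehat A,\tfrac{n}{k})(0,\infty)$ in case (i) and with $Y(0,\infty)=(L^\infty\cap L(\widehat A,\tfrac{n}{k}))(0,\infty)$ in case (ii). Note that the assumption \eqref{conv0} guarantees that the Orlicz-Lorentz functional \eqref{E:29} is indeed a rearrangement-invariant function norm, so that the Hardy inequality is meaningful.

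These one-dimensional Hardy estimates are precisely the content of \cite[Inequalities (3.1) and (3.2)]{cianchi_ibero}, where they are established for the standard gradient Sobolev embedding of $V^k L^A(\rn,\mathbb R^\ell)$; the bifurcation into the cases \eqref{divinf} and \eqref{convinf} in the statement originates from the different behavior of the Orlicz-Lorentz target as $A$ grows below or past the critical power. Thus the validity of \eqref{bound-orliczlorentz1-canc} and \eqref{bound-orliczlorentz2-canc} follows with constants depending only on $A$ and $\mathcal A_k(D)$, the latter dependence entering through the constant $c_1$ in Theorem \ref{characterization}.

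For the optimality assertion, the plan is to invoke Theorem \ref{sobolev-opt-canc}, which ensures that the smallest rearrangement-invariant target in \eqref{sobolev1} for the Orlicz domain $L^A(\rn,\mathbb R^m)$ is the space $(L^A)_k(\rn,\mathbb R^\ell)$ built from the associate functional in \eqref{may7}. Under assumption \eqref{conv0}, this space is known from \cite{cianchi_ibero} to coincide, up to equivalent norms, with $L(\widehat A,\tfrac{n}{k})(\rn,\mathbb R^\ell)$ under \eqref{divinf} and with $(L^\infty\cap L(\widehat A,\tfrac{n}{k}))(\rn,\mathbb R^\ell)$ under \eqref{convinf}. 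Consequently the targets appearing in \eqref{bound-orliczlorentz1-canc} and \eqref{bound-orliczlorentz2-canc} are optimal among all rearrangement-invariant spaces.

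No genuine obstacle arises beyond a careful bookkeeping of which hypotheses on $A$ translate into which properties of $\widehat A$, and a verification that the Orlicz-Lorentz functional used in \cite{cianchi_ibero} matches \eqref{E:29} (after a change of variable). The deeper technical work has already been absorbed into Theorem \ref{characterization}; once that reduction principle is available, Theorem \ref{sobolev-orliczlorentz-canc} is obtained by assembling pre-existing one-dimensional estimates and the known identification of $(L^A)_k$, without any further analysis of the operator $\mathcal A_k(D)$.
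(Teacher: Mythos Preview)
Your proposal is correct and follows essentially the same approach as the paper: reduce via Theorem \ref{characterization} to the one-dimensional Hardy inequality with Orlicz domain and Orlicz-Lorentz target, then cite \cite[Inequalities (3.1) and (3.2)]{cianchi_ibero} for both the inequality and its optimality. The only cosmetic difference is that you route optimality through Theorem \ref{sobolev-opt-canc} and the identification of $(L^A)_k$, whereas the paper extracts optimality directly from the optimality of the target in the Hardy inequality proved in \cite{cianchi_ibero}; these are equivalent.
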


    \begin{remark}
        \label{sob-omega}
{\rm        A version of Theorem \ref{sobolev-orliczlorentz-canc} on open sets $\Omega$  in $\rn$ such that $|\Omega|<\infty$. In this case, the a priori condition \eqref{conv0} can be dropped, and 
          $A_{\frac nk}$ can be  defined as in \eqref{An} with $A$ modified, if necessary, near $0$ in such a way that this condition is fulfilled.
          Moreover, in Part (ii), the space $(L^\infty \cap L(\widehat A,\frac{n}{k}))(\Omega, \mathbb R^\ell)$ can just be replaced with  $L^\infty(\Omega, \mathbb R^\ell)$, as the two spaces agree under the current assumption on $\Omega$, up to equivalent norms.}
    \end{remark}

\begin{example}\label{ex orlicz-lorentz}{\rm
Consider a Young function $A$ as in \eqref{june3}--\eqref{june4}. Theorem \ref{sobolev-orliczlorentz-canc}
 tells us that, if 
\begin{equation}\label{june7}
\text{either $1\leq p< \frac nk$,  or $p=\frac nk$ and $r \leq \frac nk -1$,}
\end{equation}
then the inequality \eqref{bound-orliczlorentz1-canc} holds with
\begin{equation}\label{june8}
\widehat A(t)\,\,  \simeq\,\, \begin{cases} t^{p_0}(\log \frac 1t)^{r_0} & \quad \text{ if $1\leq {p_0}< \frac nk$ }
\\
t^{\frac nk}  (\log \frac 1t)^{r_0 - \frac nk} & \quad  \text{if ${p_0}=\frac nk$ and $r_0> \frac nk -1$}
\end{cases} \quad \text{near zero,}
\end{equation}
and
\begin{equation}\label{dec254}
\widehat A(t) \,\, \simeq \,\, \begin{cases} t^p (\log t)^r & \quad  \text{ if $1\leq p< \frac nk$ }
\\
t^{\frac nk}  (\log t)^{r - \frac nk} &  \quad \text{if  $p=\frac nk$ and $r < \frac nk -1$}
\\
t^{\frac nk}  (\log t)^{-1} (\log (\log t))^{-\frac nk}&  \quad \text{if  $p=\frac nk$ and $r = \frac nk -1$}
\end{cases} \quad \text{near infinity.}
\end{equation}
In particular, the   choice ${p_0}=p<\tfrac nk$ and $r_0 =r=0$ yields $\widehat A(t) =t^p$.
\\ From an application of \cite[Lemma 6.12, Chapter 4]{BennettSharpley}, one can deduce that, if $1\leq p=p_0<\frac nk$ and $r=0$, then 
\begin{align}\label{sep40}
    L(\widehat A,\tfrac{n}{k})(\rn, \mathbb R^\ell)= L^{\frac {np}{n-k p},p}(\log L)^{\frac r p}(\rn,  \mathbb R^\ell),
\end{align}
up to equivalent norms. Hence, the choice $p_0=p=1$ and $r_0=0$ yields the inequality
\begin{align}
    \label{exLZ}
    \|u\|_{L^{\frac n{n-\alpha},1,r}(\rn, \mathbb R^\ell)} \leq c \|\mathcal A_k(D)u\|_{L^1(\log L)^{r}(\rn, \rM)}
\end{align}
for every $u\in V^{\mathcal A_k}_{{\circ}}L^1(\log L)^{r}(\rn, \mathbb R^\ell)$.
Characterizations of the space $L(\widehat A,\tfrac{n}{k})(\rn,  \mathbb R^\ell)$, analogous to \eqref{sep40}, for ${p_0}=p=\tfrac nk$, in terms of 
 Lorentz-Zygmund or generalized  Lorentz-Zygmund spaces are also available -- see e.g. \cite[Example 1.2]{cianchi_ibero}.  Let us mention that the Sobolev counterpart 
of \eqref{exLZ} for the standard first order gradient was obtained in \cite[Theorem 1]{Greco-Moscariello}.
\\ In the light of Remark \ref{sob-omega},  inequalities parallel to \eqref{bound-orliczlorentz1-canc} and \eqref{bound-orliczlorentz2-canc} for Sobolev functions $u$ supported in open sets $\Omega$, with $|\Omega|<\infty$, hold even if the functions $A$ considered in this example do not satisfy the assumption \eqref{june4}.  The only relevant piece of information is indeed the behavior near $\infty$ of $A$ and $\widehat A$ described in \eqref{june3} and \eqref{dec254}.
}
\end{example}

\begin{example}\label{lorentzzygmund} {\rm We conclude with an application of Corollary \ref{sobolev-opt-canc-domain} to Lorentz-Zygmund spaces. For brevity, we limit ourselves to domain spaces whose  first index equals $1$, namely to spaces of the form $L^{(1,q,r)}(\Omega, \mathbb R^\ell)$, with $q\in [1, \infty)$.
 As explained in Section \ref{intro}, these are the most relevant in the present setting. 
In order to avoid introducing new classes of functions spaces, we also assume, for simplicity, that $r>-\frac 1q$. 
\\ Let $\Omega$ be an open set in $\rn$ such that $|\Omega|<\infty$. 

An application of Corollary \ref{sobolev-opt-canc-domain}, combined with a result of \cite{cavaliere-new} where information on the norm \eqref{may7-domain} is provided for the space 
$L^{(1,q,r)}(\Omega, \rM)$, tells us that 
%
%
%Combining  Corollary \ref{sobolev-opt-canc-domain}  with a result of \cite{cavaliere-new} providing information on the norm \eqref{may7-domain} is  for the space 
%$X(\Omega, \rM)=L^{(1,q,r)}(\Omega, \rM)$ tells us that
    \begin{equation*}
 \|u\|_{L^{\frac n{n-k},q,r+1}(\Omega, \mathbb R^\ell)} \leq c \|\mathcal A_k(D)u\|_{L^{(1,q,r)}(\Omega, \rM)}
\end{equation*}
for some constant $c$ and   every $u \in V^{\mathcal A_k}_0L^{(1,q,r)}(\Omega, \mathbb R^\ell)$. 
}
\end{example}

\section{K-functionals for spaces of $k$-th order divergence-free vector fields} \label{sec:kfuncthigher}

%The main result of this section is an analogue of Theorem \ref{lemma2} for higher order divergence-free vector fields.
Given $n\geq 2$ and $k \in\N$, the $k$-th order divergence operator acting on functions on $\rn$ which take values in $\R^{n^k}$ will be denoted by 
 ${\operatorname*{div}_k}$.  For $F \in C^\infty(\mathbb{R}^n,\mathbb{R}^{n^k})$, it is defined as
\begin{align}\label{smooth_div_k}
{\operatorname*{div_k}} F = \sum_{\beta \in \mathbb N^n, |\beta|=k} \partial^\beta F_\beta.
\end{align} 
 For $F\in L^1_{\rm loc}(\rn, \R^{n^k})$,  the  equality $\operatorname*{div}_kF=0$ is understood in the sense of distributions, namely:
\begin{align}\label{div_free_higherorder}
\int_{\mathbb{R}^n} F \cdot \nabla^k \varphi\;dx =0
\end{align}
for every $\varphi \in C^\infty_c(\rn)$.  \\
The   definitions above differ slightly from the convention of Van Schaftingen in \cite{VS3}, a point we now clarify.  The subspace of symmetric functions on $\rn$ with values in $\mathbb{R}^{n^k}$ has dimension
\begin{align}
    \label{N}
    N= {n+k-1 \choose k}.
\end{align}
For smooth functions on $\R^n$ with values in $\R^N$, the formula \eqref{smooth_div_k} is Van Schaftingen's definition of ${\operatorname*{div}_k}$.    With an abuse of notation we utilize this symbol to denote both of these differentiations. 
To mediate between the two, one identifies
%\footnote{If one prefers to lift $F$ to a symmetric tensor, one needs to introduce combinatorial constants in the formula that follows.} 
a function $F$ with values in $\R^N$ and a tensor $\overline{F}$ with values in $\R^{n^k}$ that has $N$ non-zero components:
\begin{align*}
{\operatorname*{div_k}} F = {\operatorname*{div_k}}  \overline{F}.
\end{align*}
Alternatively, if one prefers to lift $F$ to a symmetric tensor, one needs to introduce combinatorial constants in the formula above.
\\
We write $X_{\operatorname*{div}_k}(\rn, \om)$ for the subspace of functions in a rearrangement-invariant space $X(\rn, \om)$ which satisfy $\operatorname*{div}_kF=0$ in the sense of distributions.

\medskip
\noindent
This section is devoted to the computation of the $K$-functional for the couple ($L^1_{\operatorname*{div}_k}(\rn, \rn),L^{p,q}_{\operatorname*{div}_k}(\rn, \rn)$). This is the content of the following result.

% Given a rearrangement-invariant space $X(\rn, \rn)$, we denote by  $X_{\operatorname*{div}_k}(\rn, \rn)$ its (closed) subspace of those functions $F$ such that $\operatorname*{div}_kF=0$ in the sense of distributions.

\begin{theorem}[{\bf $K$-functional for $k$-th order divergence--free vector fields}]\label{lemma2-higherorder}
Let $p \in (1,\infty)$, $q\in [1, \infty]$, $ k \in \mathbb{N}$, and let $N$ be defined by \eqref{N}.  Then,
\begin{align}\label{sep1higher}
K(F,t,L^1_{\operatorname*{div}_k}(\rn, \om),L^{p,q}_{\operatorname*{div}_k}(\rn, \om)) & \approx K(F,t,L^1(\rn, \om),L^{p,q}(\rn, \om))
\\ \nonumber & \approx
\int_0^{t^{p'}}F^*(s)\, ds + t \bigg(\int_{t^{p'}}^\infty s^{-1+\frac qp} F^*(s)\, ds\bigg)^{\frac 1q}  \qquad \text{for $t>0$,}
\end{align}
for every $F \in L^1 (\rn, \om) +L^{p,q}(\rn, \om)$ such that $\operatorname*{div}_kF=0$, with equivalence constants depending on $n, k, p, q$. %\todo{Do: should we specify the dependence of the hidden constant?}
%\todo[inline]{A: done}
\end{theorem}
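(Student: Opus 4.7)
The plan is to mimic the first-order strategy (of Bourgain's type), with the Helmholtz projection $\mathcal H$ replaced by a $k$-th order projection whose required mapping properties are supplied by Lemma \ref{lemma1-higherorder}. One inequality is immediate: any decomposition admissible for $K(F,t;L^1_{\operatorname*{div}_k},L^{p,q}_{\operatorname*{div}_k})$ is also admissible for $K(F,t;L^1,L^{p,q})$, so
\begin{equation*}
K(F,t;L^1(\rn,\om), L^{p,q}(\rn,\om)) \;\leq\; K(F,t;L^1_{\operatorname*{div}_k}(\rn,\om), L^{p,q}_{\operatorname*{div}_k}(\rn,\om)).
\end{equation*}
The real work is the reverse inequality up to a multiplicative constant depending only on $n,k,p,q$. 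By scaling one may assume $K(F,t;L^1,L^{p,q}) = 1$ and fix a decomposition $F = F_1 + F_{p,q}$ with $\|F_1\|_{L^1} + t\|F_{p,q}\|_{L^{p,q}} \leq 2$.

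Next I would apply the Calder\'on--Zygmund decomposition to $F_1$ at the height $\lambda = t^{-1/p'}$, writing $F_1 = H + K$ with $|H|\leq \lambda$, $\|H\|_{L^1}\lesssim 1$, and $K = \sum_i K_i$ with $\operatorname*{supp} K_i \subset B_i$, $\sum_i|B_i|\lesssim \lambda^{-1}$, $\sum_i \|K_i\|_{L^1}\lesssim 1$, and with $K_i$ having the higher-order vanishing moments $\int_{B_i} K_i\cdot P\,dx = 0$ for every polynomial $P$ of degree less than $k$ (which is what is needed to pair with a kernel satisfying a $k$-th order H\"ormander-type cancellation). I would then invoke Lemma \ref{lemma1-higherorder} to obtain a projection operator $P_k$ such that (a) $P_k F = F$ whenever $\operatorname*{div}_k F = 0$; (b) $\operatorname*{div}_k (P_k G) = 0$ whenever the operations are justified on $G$; (c) $P_k - \operatorname{Id}$ is a Calder\'on--Zygmund operator, bounded on $L^{p,q}(\rn,\om)$ and with kernel satisfying a H\"ormander-type condition that couples with the higher-order moment vanishing of the $K_i$. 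Writing
\begin{equation*}
F \;=\; P_k F \;=\; P_k(H + F_{p,q}) \;+\; P_k K,
\end{equation*}
the two summands are candidates for a decomposition in $L^1_{\operatorname*{div}_k} \oplus L^{p,q}_{\operatorname*{div}_k}$ once the norm bounds are verified.

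For the $L^{p,q}$-summand, boundedness of $P_k$ on $L^{p,q}$, the interpolation inequality \eqref{aug3} applied to $H$, and $|H|\leq \lambda$ yield
\begin{equation*}
\|P_k(H+F_{p,q})\|_{L^{p,q}} \;\lesssim\; \|H\|_{L^1}^{1/p}\|H\|_{L^\infty}^{1/p'} + \|F_{p,q}\|_{L^{p,q}} \;\lesssim\; \lambda^{1/p'} + t^{-1} \;\approx\; t^{-1}.
\end{equation*}
For the $L^1$-summand, set $\Omega = \bigcup_i B_i^*$ with $B_i^*$ the concentric doubled ball. On $\Omega^c$, the H\"ormander-type bound on the kernel of $P_k$ together with the cancellation of $K_i$ gives, as in the first-order case,
\begin{equation*}
\|P_k K\,\chi_{\Omega^c}\|_{L^1} \;\leq\; \sum_i \|P_k K_i\, \chi_{(B_i^*)^c}\|_{L^1} \;\lesssim\; \sum_i \|K_i\|_{L^1} \;\lesssim\; 1.
\end{equation*}
On $\Omega$, using the identity $P_k K = K + F_{p,q} + H - P_k(F_{p,q}+H)$, the H\"older inequality in Lorentz spaces, and $\|\chi_\Omega\|_{L^{p',q'}} \lesssim |\Omega|^{1/p'} \lesssim \lambda^{-1/p'}$, one obtains $\|P_k K\,\chi_\Omega\|_{L^1} \lesssim 1 + \lambda^{-1/p'}t^{-1} \approx 1$. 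Combining these bounds shows that the decomposition $F = P_k(H+F_{p,q}) + P_k K$ is admissible for $K(F,t;L^1_{\operatorname*{div}_k},L^{p,q}_{\operatorname*{div}_k})$ with the desired control, establishing the first equivalence in \eqref{sep1higher}. The second equivalence is Holmstedt's classical formula for $K(\,\cdot\,,t;L^1,L^{p,q})$ (see \cite[Theorem 4.1]{Hol}).

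The main obstacle is the $k$-th order projection $P_k$: one needs simultaneously the structural identities on $\operatorname*{div}_k$-free fields, $L^{p,q}$-boundedness, and a H\"ormander condition of order $k$ that absorbs the higher-order vanishing moments of the Calder\'on--Zygmund atoms. Once these ingredients are packaged as in Lemma \ref{lemma1-higherorder} and Lemma \ref{lemma1} (the $k=1$ prototype), the remainder of the argument is a direct transcription of the first-order scheme, with careful attention only to the fact that the identity $\operatorname*{div}_k(P_k G)=0$ must be justified for the compound object $H+F_{p,q}$ as well as for $K$, in both $L^1$ and $L^{p,q}$.
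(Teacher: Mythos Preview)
Your approach is essentially identical to the paper's proof: the trivial direction, scaling, the near-optimal unconstrained decomposition, the Calder\'on--Zygmund splitting of $F_1$, the identity $F=P_kF$ from Lemma \ref{lemma1-higherorder}, the decomposition $F=P_k(H+F_{p,q})+P_kK$, the $L^{p,q}$ bound via \eqref{aug3}, the $L^1$ bound via the $\Omega/\Omega^c$ splitting, and Holmstedt's formula for the second equivalence are all exactly what the paper does.

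Two points worth correcting. First, your choice $\lambda=t^{-1/p'}$ does not make the arithmetic close: with that $\lambda$ one has $\lambda^{1/p'}=t^{-1/(p')^2}\not\approx t^{-1}$ and $\lambda^{-1/p'}t^{-1}\not\approx 1$. The correct height is $\lambda=t^{-p'}$, so that $\lambda^{1/p'}=t^{-1}$ and $\lambda^{-1/p'}t^{-1}=1$. Second, and more importantly, you over-engineer the cancellation step. You propose atoms $K_i$ with vanishing moments up to order $k-1$ paired with a ``$k$-th order H\"ormander condition'' on the kernel of $P_k-\operatorname{Id}=\mathcal H_k$. This is unnecessary: the paper observes (just before Lemma \ref{lemma1-higherorder}) that the Fourier multiplier $\xi^{\otimes k}|\xi|^{-k}\,(\xi^{\otimes k}|\xi|^{-k}\cdot\,)$ is a Mihlin multiplier, so the kernel of $\mathcal H_k$ satisfies the \emph{standard} H\"ormander condition \eqref{hormander}. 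Consequently only the mean-zero property $\int_{B_i}K_i\,dx=0$ of the ordinary Calder\'on--Zygmund decomposition is needed, and Bourgain's inequality \cite[Inequality (2.13)]{Bourgain} applies verbatim to give $\|\mathcal H_k K_i\,\chi_{(B_i^*)^c}\|_{L^1}\lesssim\|K_i\|_{L^1}$. No modified Calder\'on--Zygmund decomposition with higher-order moments is required.
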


\begin{remark}
    \label{rem-sep}
      {\rm Let us stress that 
    equation \eqref{sep1higher} does hold for every
$F \in L^1 (\rn, \mathbb R^N) +L^{p,q}(\rn, \mathbb R^N)$ such that $\operatorname*{div}_kF=0$, namely for $F\in  (L^1 (\rn, \mathbb R^N) +L^{p,q}(\rn, \mathbb R^N))_{\operatorname*{div}_k}$, and not just for $F\in  L^1_{\operatorname*{div}_k}(\rn, \mathbb R^N)+L^{p,q}_{\operatorname*{div}_k}(\rn, \mathbb R^N)$, as would be sufficient according to the definition of $K(F,t,L^1_{\operatorname*{div}_k}(\rn, \mathbb R^N),L^{p,q}_{\operatorname*{div}_k}(\rn, \mathbb R^N))$. In particular, this encodes the piece of information  that any function $F\in  (L^1 (\rn, \mathbb R^N) +L^{p,q}(\rn, \mathbb R^N))_{\operatorname*{div}_k}$ admits an essentially optimal decomposition of the form $F=F_1+F_{p,q}$ with $F_1 \in  L^1_{\operatorname*{div}_k}(\rn, \mathbb R^N)$ and $F_{p,q} \in L^{p,q}_{\operatorname*{div}_k}(\rn, \mathbb R^N)$.
This   is crucial in view of our applications of Theorem \ref{lemma2-higherorder}
.}
\end{remark}

%A remark analogous to Remark \ref{rem-sep} holds about the class of functions $F$ allowed in the formula \eqref{sep1higher}. 

% \begin{remark}
%     \label{rem-sep}
%     {\rm Equation \eqref{sep1} will be in fact shown to hold for the larger class of functions
% $F \in L^1 (\rn, \rM) +L^{p,q}(\rn, \rM)$ such that $\operatorname*{div}F=0$ in the sense of distributions.}
% \end{remark}

The proof of Theorem \ref{lemma2-higherorder} is reminiscent of that of \cite[Lemma 2.4 on p.~170]{Bourgain}, of which our result is an extension. It 
makes critical use of the invariance of vector-valued functions composed with the $k$-th  order Helmholz projection operator under the constraint $\operatorname*{div}_kF=0$. This is the content of 
Lemma \ref{lemma1-higherorder} below.
 The relevant operator  is formally defined as 
\begin{align}
    \label{aug17prime} \mathcal H_kF = (-1)^{k-1}\nabla^k \operatorname*{div_k} (-\Delta)^{-k} F
\end{align}
 for   $F \in L^1(\rn, \om) +L^{p,q}(\rn, \om)$, where  
 $F$ is canonically identified with a symmetric tensor with $n^k$ components.  
 \\
Now, the Fourier calculus yields
\begin{align*}
\mathcal H_k \Phi = \left( -\frac{\xi^{\otimes k}}{|\xi|^k} \left(\frac{\xi^{\otimes k}}{|\xi|^k}\cdot \widehat{\Phi}(\xi)\right) \right)\widecheck{\phantom{x}}
\end{align*}
 if $\Phi \in C^\infty_c(\mathbb{R}^n,   \om)$. 
Since
\begin{align*}
\left|\frac{\partial^\beta}{\partial \xi^\beta} \frac{\xi^{\otimes k}}{|\xi|^k} \frac{\xi^{\otimes k}}{|\xi|^k} \right| \lesssim |\xi|^{-|\beta|} \qquad \text{for $\xi \neq 0$,}
\end{align*}
for every multi-index $\beta \in \mathbb{N}_0^n$,
 the operator $\mathcal H_k$ is bounded on $L^p(\mathbb{R}^n,\om)$ for $1<p<\infty$ and therefore also on $L^{p,q}(\mathbb{R}^n,  \om)$ for $1<p<\infty$ and $1\leq q \leq \infty$. 
 Indeed,
 one can invoke Mihlin's multiplier theorem \cite[Theorem 6.2.7 on p.~446]{grafakos} to prove   boundedness in $L^p$, while an interpolation argument \cite[Theorem 1.4.19 on p.~61]{grafakos} yields $L^{p,q}$ boundedness.
 \\
The kernel $\kappa : \rn \setminus \{0\} \to \R$ of the operator $\mathcal H_k$ fulfills the so called H\"ormander condition:
\begin{align}
    \label{hormander}
    \sup_{x\neq 0}\int_{\{|y|\geq 2|x|\}} |\kappa (y-x)- \kappa (y)|\, dy<\infty,
\end{align}
 see \cite[Proof of Theorem 6.2.7]{grafakos}.
\iffalse
$\kappa : \rn \setminus \{0\} \to \R$ of the operator $\mathcal H_k$ fulfills the so called H\"ormander condition:
\begin{align}
    \label{hormander}
    \sup_{x\neq 0}\int_{\{|y|\geq 2|x|\}} |\kappa (y-x)- \kappa (y)|\, dy<\infty,
\end{align}
see \cite[Proof of Theorem 6.2.7]{grafakos}.
\fi
\\ Notice that  
\begin{align}
    \label{aug18-higher}
    \mathcal H_k \,\nabla^k \varphi = -\nabla^k \varphi
\end{align}
for every $\varphi \in C^\infty_c(\mathbb{R}^n)$.

\begin{lemma}\label{lemma1-higherorder}
Let  $p\in (1,\infty)$, $q\in [1, \infty]$, $ k \in \mathbb{N}$, and let $N$ be defined by \eqref{N}.   Define the operator $P_k$ as
\begin{align*}
P_kF = F+\mathcal H_k F
\end{align*}
for  $F \in L^1(\rn, \om) +L^{p,q}(\rn, \om)$. 
\\ (i) If $\operatorname*{div}_kF=0$ in the sense of distributions, then
\begin{align}\label{aug20-higher}
P_kF=F.
\end{align}
 (ii) If $F, P_kF \in L^1(\rn, \om)$ or $F, P_kF \in L^{p,q}(\rn, \om)$, then  
\begin{align}\label{aug21-higher}
\operatorname*{div_k} P_kF = 0
\end{align}
in the sense of distributions.
\end{lemma}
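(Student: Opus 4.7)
The plan is to imitate the argument of Lemma \ref{lemma1} for the first-order case, with $\mathcal{H}$ replaced by the higher-order Helmholz projector $\mathcal{H}_k$ and the gradient by $\nabla^k$. Two structural facts will do nearly all the work: the identity $\mathcal{H}_k \nabla^k \varphi = -\nabla^k \varphi$ from \eqref{aug18-higher}, and the symmetry of the Fourier multiplier defining $\mathcal{H}_k$, which yields the duality $\int \mathcal{H}_k F \cdot \Phi \, dx = \int F \cdot \mathcal{H}_k \Phi \, dx$ whenever joint integrability legitimates Fubini's theorem.

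For Part (i), I would proceed by duality. For arbitrary $\Phi \in C^\infty_c(\rn,\om)$, setting $\varphi := \operatorname*{div_k}(-\Delta)^{-k}\Phi$ gives $\mathcal{H}_k \Phi = (-1)^{k-1}\nabla^k \varphi$ by the Fourier calculus, and the explicit Riesz-kernel representation of $(-\Delta)^{-k}$ yields the decay bounds $|\nabla^j \varphi(x)| \lesssim (1+|x|)^{-(n-1+j)}$ for $j=0,1,\ldots,k$; in particular $\varphi \in L^\infty(\rn) \cap L^{n'p',n'q'}(\rn)$ and $\nabla^j \varphi \in L^\infty \cap L^{p',q'}$ for $1 \leq j \leq k$. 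The crux is then to extend the distributional identity $\int F \cdot \nabla^k \psi \, dx = 0$ from $\psi \in C^\infty_c(\rn)$ to all test functions enjoying such decay. This I would do exactly as in Lemma \ref{lemma1}, via mollifiers $\{\rho_h\}$ supported in $B_{1/h}$ and a cutoff $\eta \in C_c^k(\rn)$ equal to $1$ on $B_R$, supported in $B_{2R}$ and with $|\nabla^j \eta| \lesssim R^{-j}$ for $0 \leq j \leq k$: writing
\begin{align*}
\int F \cdot \nabla^k \varphi \, dx = \lim_{h \to \infty}\Big(\int F \cdot \nabla^k\big((\varphi * \rho_h)\eta\big)\,dx + \int F \cdot \nabla^k\big((\varphi * \rho_h)(1-\eta)\big)\,dx\Big),
\end{align*}
the first summand vanishes by the distributional hypothesis (its integrand is $C^\infty_c$), while a Leibniz expansion of the second produces a sum of products $\nabla^{k-j}(\varphi * \rho_h)\cdot \nabla^j(1-\eta)$, each pairing against the $L^1$ and $L^{p,q}$ parts of $F$ via the H\"older inequality \eqref{holderlor}; the decay of $\nabla^{k-j}\varphi$, the factor $R^{-j}$, the rearrangement-invariant boundedness of convolutions (cf.\ \cite[Theorem 2.12, Chapter 3]{BennettSharpley}), and absolute continuity of $L^{p,q}$ when $q<\infty$ (or the pointwise rearrangement bound employed in Lemma \ref{lemma1} when $q=\infty$) combine to force every contribution to zero as $R \to \infty$ uniformly in $h$. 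With this extended identity in hand,
\begin{align*}
\int P_kF \cdot \Phi \, dx = \int F \cdot P_k \Phi \, dx = \int F \cdot \Phi \, dx + (-1)^{k-1}\int F \cdot \nabla^k \varphi \, dx = \int F \cdot \Phi \, dx
\end{align*}
for every $\Phi \in C^\infty_c(\rn,\om)$, which gives \eqref{aug20-higher}.

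Part (ii) is more direct. The hypothesis implies that $\mathcal{H}_k F = P_k F - F$ lies in the same $L^1$ or $L^{p,q}$ space as $F$. For any $\varphi \in C^\infty_c(\rn)$, the membership $\nabla^k \varphi \in L^\infty \cap L^{p',q'}$ legitimates Fubini in
\begin{align*}
\int \mathcal{H}_k F \cdot \nabla^k \varphi \, dx = \int F \cdot \mathcal{H}_k (\nabla^k \varphi) \, dx = -\int F \cdot \nabla^k \varphi \, dx,
\end{align*}
using \eqref{aug18-higher}, so that $\int P_k F \cdot \nabla^k \varphi \, dx = 0$, i.e.\ \eqref{aug21-higher}. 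The hard part throughout is the extension step of Part (i): verifying that the Leibniz remainders decay uniformly in $h$ demands a careful accounting of the Lorentz exponents in the H\"older pairings, but the estimate is structurally identical to its first-order counterpart and should go through without surprises.
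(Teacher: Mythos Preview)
Your Part (ii) is correct and matches the paper's argument verbatim. The overall architecture of your Part (i) is also the right one --- mollify, cut off, use the distributional hypothesis on the compactly supported piece, and show the remainder terms vanish as $R\to\infty$. The gap is in the decay rate you claim for $\varphi = \operatorname*{div_k}(-\Delta)^{-k}\Phi$.

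You assert $|\nabla^j\varphi(x)|\lesssim (1+|x|)^{-(n-1+j)}$. That is the first-order rate; for general $k$ the multiplier $\xi^\beta/|\xi|^{2k}$ (with $|\beta|=k$) is $(-k)$-homogeneous, and the correct bound is
\[
|\nabla^j\varphi(x)|\lesssim \frac{|\log(2+|x|)|}{(1+|x|)^{n-k+j}}, \qquad j=0,\dots,k,
\]
as the paper records in \eqref{phi-higherorder} (citing \cite{Mizuta}). With this weaker decay your global membership claims collapse: for instance, $\nabla\varphi$ decays only like $(1+|x|)^{-(n-k+1)}$, so $\nabla\varphi\in L^{p',q'}(\rn)$ would require $p'> n/(n-k+1)$, which fails for large $p$; likewise $\varphi\in L^{n'p',n'q'}(\rn)$ is not available for $k>1$ in general. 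Hence you cannot simply invoke H\"older's inequality against a fixed global Lorentz norm of $\nabla^j\varphi$ as in the $k=1$ case.

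The paper repairs this by never asking for global $L^{p',q'}$ membership of the lower-order derivatives. Instead, in the Leibniz expansion of \eqref{aug8-higherorder} the terms with $j<k$ live on the annulus $B_{2R}\setminus B_R$, and one computes explicitly (see \eqref{oct1bis}) that
\[
\frac{1}{R^{k-j}}\bigl\||\nabla^j\varphi|\chi_{B_{3R}\setminus B_{R/2}}\bigr\|_{L^{p',q'}(\rn)}\lesssim R^{-n/p}\log R,
\]
the factor $R^{-(k-j)}$ from the cutoff exactly compensating the deficit $k-j$ in the decay exponent. The top-order term $j=k$ is handled separately on $\rn\setminus B_{R/2}$ by absorbing the logarithm into an $\varepsilon$-loss in the exponent (see \eqref{oct2}). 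Your outline would go through once you correct the decay rate and replace the global Lorentz membership claims by these annular computations; as written, the ``should go through without surprises'' step does not survive the passage from $k=1$ to $k>1$.
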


\begin{proof} 
Throughout this proof, the constants in the relations $\lq\lq \lesssim"$ and $\lq\lq \approx"$ only depend on $n, k, p, q$.
\\
Part (i).
Let $F \in L^1(\rn, \om) + L^{p,q}(\rn, \om)$ be such that $\operatorname*{div}_kF=0$
 in the sense of distributions, i.e. it satisfies \eqref{div_free_higherorder}  for all $\varphi \in C^\infty_c(\mathbb{R}^n)$.   
We begin by showing that
\begin{align}\label{higher-order-claim}
\int_{\rn} F \cdot \nabla^k\operatorname*{div_k} (-\Delta)^{-k} \Phi\;dx = 0
 \end{align}
 for every $\Phi \in C^\infty_c(\mathbb{R}^n,\om)$, despite the fact that the test function $\varphi =\operatorname*{div}_k (-\Delta)^{-k} \Phi$ is not a priori admissible  in \eqref{div_free_higherorder}  for an arbitrary $\Phi \in C^\infty_c(\mathbb{R}^n,\om)$.
 \iffalse
 When $k<n$, the argument would only need minor modification, as one has
 \begin{align}\label{phi-higherorder-spaces}
|\nabla^j \varphi| \in L^\infty(\mathbb{R}^n)\cap L^{(n/(k-j))'p',(n/(k-j))'q'}(\mathbb{R}^n) \quad \text{ for all } j=0,\ldots, k.
\end{align}
\fi
One has that
\begin{align}\label{phi-higherorder}
|\nabla^j \varphi(x)| \lesssim \frac{|\log (2+ |x|) |}{(1+|x|)^{n-k+j}} \quad \text{for $x\in \rn$,}
\end{align}
for $j=0\ldots k$, as follows  e.g., from  \cite[Theorem 2.1 on p.~209]{Mizuta}. 
We claim that 
 equation \eqref{higher-order-claim} will follow if we prove that
\begin{align}
    \label{dec24-12}
    \int_{\mathbb{R}^n} F \cdot \nabla^k ((\varphi\ast \rho_h)(1-\eta))\;dx=0,
\end{align}
where
$\{\rho_h\}$ is a standard mollifier supported in the ball $B_{1/h}$ for $h>0$,  and $\eta \in C^\infty_c(\mathbb{R}^n)$ is a cut-off function such that $\eta =1$ on $B_R$, $\eta =0$ outside $B(2R)$ and $|\nabla^j\eta|\lesssim 1/R^j$, $j=0,1,\dots,k$ for some $R>0$. Indeed, 
\begin{align}\label{aug15}
\int_{\mathbb{R}^n} F \cdot \nabla^k \varphi\;dx &= 
\lim_{h \to \infty} \bigg(\int_{\mathbb{R}^n} F \cdot \nabla^k ((\varphi\ast \rho_h) \eta)\;dx +\int_{\mathbb{R}^n} F \cdot \nabla^k ((\varphi\ast \rho_h)(1-\eta))\;dx\bigg) \\
\nonumber &= \lim_{h \to \infty} \int_{\mathbb{R}^n} F \cdot \nabla^k ((\varphi\ast \rho_h)(1-\eta))\;dx,
\end{align}
%\todo[inline]{In the next section we use $l$ for the mollifiers because the order of the operator is $k$.  Maybe we should switch the $k$ to $l$ here.}
%\todo[inline]{A: $\ell$ was also taken for the dimension of some target spaces. I switched to $h$ for mollifiers everywhere}
where the first equality holds thanks to the dominated convergence theorem, since $|\nabla^k \varphi| \in L^\infty(\rn)$, and the second one by \eqref{div_free_higherorder}.
\\ To prove \eqref{dec24-12}, observe that, if $F=F_1+F_{p,q}$ with $F_1\in L^1(\mathbb{R}^n,\om)$ and $F_{p,q}\in L^{p,q}(\mathbb{R}^n,\om)$, then
\begin{align}\label{aug8-higherorder}
\left|\int_{\mathbb{R}^n} F \cdot \nabla^k ((\varphi\ast \rho_h)(1-\eta))\;dx\right| &\lesssim  \int_{\rn \setminus B_R} (|F_1| +|F_{p,q}|)(|\nabla^k \varphi|* \rho_h)  \;dx \\ \nonumber
&\quad+ \sum_{j=0}^{k-1} \frac 1{R^{k-j}} \int_{B_{2R} \setminus B_R} (|F_1| +|F_{p,q}|)|(|\nabla^j \varphi| *  \rho_h)\;dx.
\end{align}
By the 
the inequality \eqref{phi-higherorder}, one has that 
\begin{align*}
\sum_{j=0}^{k-1} \frac 1{R^{k-j}}  \||\nabla^j \varphi| *  \rho_h  \|_{L^\infty(B_{2R} \setminus B_R)} + \||\nabla^k \varphi| *  \rho_h \|_{L^\infty(\mathbb{R}^n \setminus B_R)} \lesssim \frac{|\log R|}{R^n}.
\end{align*}
Since the latter quotient tends to zero as 
 $R \to \infty$, the integrals depending on $F_1$ on the right-hand side of \eqref{aug8-higherorder}  also tend to zero as 
 $R \to \infty$.
 \\ It remains to show that the same happens for the integrals 
depending on $F_{p,q}$. 
As a preliminary step, fix any measurable set $E\subset \rn$ and observe that, since ${\rm \supp} \rho_h \subset B_1$ for $h\in \N$, an application of Fubini's theorem and the inequality 
$$\chi_{B_1}(y)\chi_{B_{2R}\setminus B_R}(x) \leq \chi_{B_{2R+1}\setminus B_{R-1}}(x-y) \quad \text{for $x,y \in \rn$}$$
imply that
\begin{align}
    \label{aug13}
    \int_E (|\nabla^j \varphi|\ast\rho_h)\chi_{\rn \setminus B_R}\, dx \leq \int_E (|\nabla^j \varphi|\chi_{\rn \setminus B_{R-1}})\ast \rho_h\, dx
\end{align}
for $j=0, \dots k$.
Hence, via equation \eqref{aug11} and the inequality \eqref{nov1}, one deduces that
\begin{align}
    \label{aug14}
    (|\nabla^j \varphi|\ast\rho_h)\chi_{\rn \setminus B_R})^{**} (s) \leq (|\nabla^j \varphi|\chi_{\rn \setminus B_{R-1}})^{**}(s) \quad \text{for $s>0$.}
\end{align}
  Analogously to \eqref{aug14}, one has that
\begin{align}
    \label{aug14bis}
    (|\nabla^j \varphi|\ast\rho_h)\chi_{B_{2R} \setminus B_R})^{**} (s) \leq (|\nabla^j \varphi|\chi_{B_{3R} \setminus B_{R/2}})^{**}(s) \quad \text{for $s>0$,}
\end{align}
provided that $R>2$. 
Owing to the  inequalities \eqref{aug14} and \eqref{aug14bis}, and the property \eqref{hardy},
\begin{align}\label{oct1}
    \sum_{j=0}^{k-1} \|(|\nabla^j \varphi| &*  \rho_h ) \chi_{B_{2R} \setminus B_R}\|_{L^{p',q'}(\rn)} +\|(|\nabla^k \varphi| *  \rho_h ) \chi_{\rn \setminus B_R}\|_{L^{p',q'}(\rn)}\\
\nonumber&\leq \sum_{j=0}^{k-1} \||\nabla^j \varphi|  \chi_{B_{3R} \setminus B_{R/2}}\|_{L^{p',q'}(\rn)} +\||\nabla^k \varphi|   \chi_{\rn \setminus B_{R/2}}\|_{L^{p',q'}(\rn)}
 \end{align}
 for $R>2$.  
 %In fact, we can show that the right-hand-side of the last inequality tends to zero as $R \to \infty$ also.  It suffices to show this convergence to zero of each term.  
 Assume first that $j<k$. 
The inequalities \eqref{phi-higherorder} and the property (P2) for Lorentz norms imply that
 \begin{align}\label{oct1bis}
     \frac {1}{R^{k-j}} &\||\nabla^j \varphi|  \chi_{B_{3R} \setminus B_{R/2}}\|_{L^{p',q'}(\rn)} \lesssim  \frac {1}{R^{k-j}} \bigg\|\frac{|\log({2+}|x|)|}{(1+|x|^{n-k+j})}\chi_{B_{3R} \setminus B_{R/2}}\bigg\|_{L^{p',q'}(\rn)}\\
\nonumber&\lesssim  \frac {\log(R)}{R^{k-j}} \bigg\|\frac{1}{(1+|x|^{n-k+j})}\chi_{B_{3R} \setminus B_{R/2}}\bigg\|_{L^{p',q'}(\rn)}
     \\ \nonumber & \lesssim
     \frac {\log R}{R^{k-j}}
  \left(\int_0^\infty \left(t\left|\left\{ x \in B_{3R} \setminus B_{R/2} : \frac{1}{(1+|x|)^{n-k+j}} >t\right\}\right|^{1/p'}\right)^{q'}\;\frac{dt}{t}\right)^{1/q'}
 \\ & \nonumber \leq
     \frac {\log R}{R^{k-j}}
  \left(\int_0^{cR^{k-n-j}} \left(t\left|\left\{ x \in B_{3R} \setminus B_{R/2} : \frac{1}{(1+|x|)^{n-k+j}} >t\right\}\right|^{1/p'}\right)^{q'}\;\frac{dt}{t}\right)^{1/q'}
\\ &  \nonumber\leq
     \frac {\log R}{R^{k-j}}
  \left(\int_0^{cR^{k-n-j}} \left(t\left|\left\{ x \in B_{3R} \setminus B_{R/2} \right\}\right|^{1/p'}\right)^{q'}\;\frac{dt}{t}\right)^{1/q'}
 \\ & \nonumber \lesssim
     \frac { R^{n/p'} \log R}{R^{k-j}}
  \left(\int_0^{cR^{k-n-j}} t^{q'-1}\;dt\right)^{1/q'} 
  \\ \nonumber & \approx R^{-n/p} \log R
\end{align}
for some constant $c$ and for $R>2$.
\iffalse
If $j<k$, then 
\begin{align}\label{oct2}
   \frac {1}{R^{k-j}} &\||\nabla^j \varphi|  \chi_{B_{3R} \setminus B_{R/2}}\|_{L^{p',q'}(\rn)}  
\\ &  \nonumber\leq
     \frac {\log R}{R^{k-j}}
  \left(\int_0^{cR^{k-n-j}} \left(t\left|\left\{ x \in B_{3R} \setminus B_{R/2} \right\}\right|^{1/p'}\right)^{q'}\;\frac{dt}{t}\right)^{1/q'}
 \\ & \nonumber \lesssim
     \frac { R^{n/p'} \log R}{R^{k-j}}
  \left(\int_0^{cR^{k-n-j}} t^{q'-1}\;dt\right)^{1/q'} 
  \\ \nonumber & \approx R^{-n/p} \log R.
\end{align}
A similar argument gives the desired bound for the corresponding norm with $j=k$.
%, as one observes that the small change in the set of integration in the quasi-norms does not affect the estimate.
\fi
\\ Consider next that case when $j=k$.
For every $\varepsilon >0$ there exists a positive constant $c=c(\varepsilon)$  such that $\log(\textcolor{red}{2+}|x|)\leq\,c(\varepsilon)(1+|x|)^{\varepsilon}$. 
If $\varepsilon$ is chosen so small that $1-\tfrac{n}{p'(n-\varepsilon)}>0$, then, by \eqref{phi-higherorder},
\begin{align}\label{oct2}
   &\||\nabla^k \varphi|  \chi_{\R^n \setminus B_{R/2}}\|_{L^{p',q'}(\rn)}\lesssim  \bigg\|\frac{1}{(1+|x|)^{n-\varepsilon}}\chi_{ \R^n \setminus B_{R/2}}\bigg\|_{L^{p',q'}(\rn)}\\
\nonumber & \lesssim
  \left(\int_0^\infty \left(t\left|\left\{ x \in \R^n \setminus B_{R/2} : \frac{1}{(1+|x|)^{n-\varepsilon}} >t\right\}\right|^{1/p'}\right)^{q'}\;\frac{dt}{t}\right)^{1/q'}
 \\ & \nonumber \leq
  \left(\int_0^{cR^{\varepsilon-n}} \left(t\left|\left\{ x \in \R^n \setminus B_{R/2} : \frac{1}{(1+|x|)^{n-\varepsilon}} >t\right\}\right|^{1/p'}\right)^{q'}\;\frac{dt}{t}\right)^{1/q'}\\
%&\nonumber\leq    \||\nabla^j \varphi|  \chi_{B_{3R} \setminus B_{R/2}}\|_{L^{p',q'}(\rn)}
%\\ 
%&  \nonumber\leq
%  \left(\int_0^{cR^{k-n-j}} \left(t\left|\left\{ x \in B_{3R} \setminus B_{R/2} \right\}\right|^{1/p'}\right)^{q'}\;\frac{dt}{t}\right)^{1/q'}
% \\
 & \nonumber \lesssim
\left(\int_0^{cR^{\varepsilon-n}}t^{-\frac{nq'}{p'(n-\varepsilon)}} t^{q'-1}\;dt\right)^{1/q'} 
  \\ \nonumber & \approx R^{-(n-\varepsilon)\gamma},
\end{align}
where we have set $\gamma=1-\tfrac{n}{p'(n-\varepsilon)}$.
\\ From equations \eqref{oct1}--\eqref{oct2} we deduce that also the integrals depending on $F_{p,q}$ on the right-hand side of the inequality \eqref{aug8-higherorder} tend to zero as $R\to \infty$. Equation \eqref{higher-order-claim} is thus established.
\iffalse
Since $p'\in (1,\infty)$ we can always find some $\varepsilon=\varepsilon(p,n)>0$ such that $\gamma>0$.
 This shows that the left-hand-side of \eqref{aug8-higherorder} tends to zero as one sends $R \to \infty$, and thus completes the demonstration of \eqref{higher-order-claim}. 
 \fi
{
%Now, 
%for $\Phi \in C^\infty_c(\mathbb{R}^n,\mathbb{R}^n)$, set $\varphi =\operatorname*{div} (-\Delta)^{-1} \Phi$.  Then $\varphi \in C^1(\mathbb{R}^n)$ and
%\begin{align*}
%|\varphi(x)| \lesssim \frac{1}{(1+|x|)^{n-1}} \quad \text{and} \quad
%|\nabla \varphi(x)| \lesssim \frac{1}{(1+|x|)^{n}} \quad \text{for $x\in \rn$,}
%\end{align*}
%whence $\varphi \in L^\infty(\mathbb{R}^n)\cap L^{n'p',n'q'}(\mathbb{R}^n)$ and $\nabla \varphi \in L^\infty(\mathbb{R}^n, \rn)\cap L^{p',q'}(\mathbb{R}^n, \rn)$.  Therefore, as shown above, equation \eqref{div_free} holds with this choice of $\varphi$, namely
%\begin{align*}
%\int_{\mathbb{R}^n} F \cdot \nabla \operatorname*{div} (-\Delta)^{-1} \Phi \;dx =0.
%\end{align*}
%However, as  $\Phi \in C^\infty_c(\mathbb{R}^n;\mathbb{R}^n)$,
%$\nabla \operatorname*{div} (-\Delta)^{-1} \Phi = T\Phi$ and so
\\ Hence, by equation \eqref{aug17prime}, we have
\begin{align*}
\int_{\mathbb{R}^n} F \cdot \Phi \;dx =\int_{\mathbb{R}^n} F \cdot (\Phi+\mathcal H_k\Phi) \;dx
=\int_{\mathbb{R}^n} F \cdot P_k\Phi \;dx =\int_{\mathbb{R}^n} P_kF \cdot \Phi \;dx,
\end{align*}
where the last equality follows via an integration by parts and Fubini's theorem. This proves equation \eqref{aug20-higher}.
\\ Part (ii). We have to show that, 
if $F,P_k{F} \in L^1(\rn, \rn)$ or $F,P_kF \in L^{p,q}(\rn, \rn)$, then
\begin{align}\label{aug23}
\int_{\mathbb{R}^n} P_kF \cdot \nabla^k \varphi \;dx =0
\end{align}
for every $\varphi \in C^\infty_c(\mathbb{R}^n)$.  One has that
\begin{align}\label{aug22}
\int_{\mathbb{R}^n} P_kF \cdot \nabla^k \varphi \;dx &= \int_{\mathbb{R}^n} F \cdot \nabla^k \varphi \;dx+ \int_{\mathbb{R}^n} \mathcal H_k F \cdot \nabla^k \varphi \;dx.
\end{align}
The assumptions on $F,P_kF$ imply $ \mathcal H_k F \in L^1(\rn, \rn)$ or $ \mathcal H_k F\in L^{p,q}(\rn, \rn)$. Either of these  integrability properties  suffices to ensure that 
\begin{align*}
    \int_{\mathbb{R}^n} \mathcal H_k F \cdot \nabla^k \varphi \;dx &= \int_{\mathbb{R}^n}   F \cdot \mathcal H_k\, \nabla^k \varphi \;dx.
    %\\
    %&=-\int_{\mathbb{R}^n}   F \cdot \nabla \varphi \;dx,
\end{align*}
Combining the latter equality with 
 equations \eqref{aug18-higher} and \eqref{aug22} yields \eqref{aug23}.
} 
%The remaining part of the proof parallels that of Lemma \ref{lemma1} and is omitted, for brevity.
\end{proof}

{
\begin{proof}
    [Proof of Theorem \ref{lemma2-higherorder}]
All functions appearing throughout this proof map $\rn$ into $\mathbb R^N$. Since there will be no ambiguity, we drop the notation of the domain and the target in the function spaces. The  constants in the relations $\lq\lq \lesssim"$ and $\lq\lq \approx"$ only depend on $n, k, p, q$.
\\
    Recall that, according to the definition \eqref{Kfunct}, 
    \begin{align*}
        K(F,t,L^1,L^{p,q}) &= \inf \{ \|F_1\|_{L^1}+ t\|F_{p,q}\|_{L^{p,q} } :F=F_1+F_{p,q}\}\quad \text{for $t>0$}
        \end{align*}
        for $F\in L^1 +L^{p,q}$,
        and 
           \begin{align*}
         K(F,t,L^1_{\operatorname*{div}_k} ,L^{p,q}_{\operatorname*{div}_k} )  = \inf \{ \|F_1\|_{L^1 }+ t\|F_{p,q}\|_{L^{p,q} } : F=F_1+F_{p,q}, {\rm div}_k F_1={\rm div}_kF_{p,q} =0 \}\quad \text{for $t>0$}
    \end{align*}
    for $F\in L^1_{\operatorname*{div}_k} +L^{p,q}_{\operatorname*{div}_k}$. 
    \\ Fix $t>0$ and $F \in L^1   +L^{p,q}$ such that $\operatorname*{div}_kF=0$. Thanks to a larger class of admissible decompositions in the computation of the $K$-functional on its left-hand side, 
 the inequality
    \begin{align*}
       K(F,t,L^1 ,L^{p,q} ) \leq K(F,t,L^1_{\operatorname*{div}_k},L^{p,q}_{\operatorname*{div}_k})
    \end{align*}
    is trivial.  To establish the first equivalence in \eqref{sep1higher}, it therefore remains to show that, up to a multiplicative constant, the reverse inequality also holds, namely:
    \begin{align}\label{sep5}
    K(F,t,L^1_{\operatorname*{div}_k},L^{p,q}_{\operatorname*{div}_k})  \lesssim K(F,t,L^1,L^{p,q}).
    \end{align}
By scaling, we may assume
\begin{align}\label{sep7}
    K(F,t,L^1,L^{p,q}) =1,
    \end{align}
and then we must show that
\begin{align}\label{sep6}
   K(F,t,L^1_{\operatorname*{div}_k},L^{p,q}_{\operatorname*{div}_k}) \lesssim 1.
    \end{align}

In order to prove the inequality \eqref{sep6} under \eqref{sep7}, 
consider any decomposition  $F=F_1+F_{p,q}$  of $F$   such that
\begin{align*}
   \|F_1\|_{L^1}+ t\|F_{p,q}\|_{L^{p,q}} \leq 2.
\end{align*}
The Calder\'on-Zygmund decomposition \cite[Theorem 5.3.1 on p.~355]{grafakos} of $F_1$, with $\lambda = t^{-p'}$, 
yields $$F_1=H+K$$ for some functions $H, K \in L^1$ such that:
\begin{align*}
|H| &\leq \lambda ,\\
\|H\|_{L^1} &\leq \|F_1\|_{L^1} \leq 2,
\end{align*}
and $$K=\sum_i K_i$$ for suitable functions $K_i\in L^1$ satisfying, for balls $B_i\subset\R^n$,
\begin{align*}
\operatorname*{supp} K_i &\subset B_i,
\\ \int_{B_i} K_i \,dx&= 0,\\
\sum_i |B_i| &\lesssim \|F_1\|_{L^1}
\lambda^{-1} \leq 2\lambda^{-1},\\
\sum_i \|K_i\|_{L^1} &\lesssim \|F_1\|_{L^1} \leq 2.
\end{align*}
By Lemma  \ref{lemma1-higherorder}, Part (i), we have that $F=P_kF$. Therefore,
\begin{align}\label{aug1}
F= P_kF = P_k(H+F_{p,q})+P_kK.
\end{align}
If we 
 show that $P_kK \in L^1, P_k
(H+F_{p,q})\in L^{p,q}$, and 
\begin{align}\label{final_claim}
\|P_kK\|_{L^1} + t\|P_k(H+F_{p,q})\|_{L^{p,q}} \lesssim 1,
\end{align}
then we can conclude that 
\eqref{aug1} is an admissible decomposition for the $K$-functional for the couple $(L^1_{\operatorname*{div}_k} ,L^{p,q}_{\operatorname*{div}_k})$, since, by  Lemma  \ref{lemma1-higherorder}, Part (ii),
$${{\rm div}_k}P_kK=0 \quad \text{and} \quad {\rm div}_k P_k
(H+F_{p,q})=0.$$
Hence \eqref{sep6} will follow via \eqref{final_claim}.
\\ To complete the proof, it thus only remains
to prove the bound \eqref{final_claim}.
%
%To this end, we observe that the invariance of $F$ under $P$ implies
%\begin{align*}
%F= F_1+F_{p,q} = (H+F_{p,q})+K = P(H+F_{p,q})+P(K).
%\end{align*}
Concerning the second addend on the left-hand side of  \eqref{final_claim}, by the boundedness of $\mathcal H_k$, and hence of $P_k$, on 
 $L^{p,q}$
%
 %bounds for the Calder\'on-Zygmund operator $P$ \cite[Theorem 1.4.19 on p.~61 and Theorem 5.2.7 on p.~339]{grafakos} 
 and the inequality \eqref{aug3},
one has
\begin{align}\label{aug2}
\|P_k(H+F_{p,q})\|_{L^{p,q}} &  \lesssim \|H\|_{L^{p,q}}+  \|F_{p,q}\|_{L^{p,q}}\\ \nonumber
&\lesssim \|H\|_{L^1}^{1/p}\|H\|_{L^\infty}^{1/p'}+\|F_{p,q}\|_{L^{p,q}}\\ \nonumber
&\lesssim \lambda^{1/p'} +t^{-1}\\ \nonumber
&\approx t^{-1}.
\end{align}
%This completes the proof of the desired bound for the second addend.
Turning our attention to the bound for the first addend on the right-hand side of \eqref{final_claim}, we define $\Omega = \cup_i B_i^*$, where $B_i^{*}$ is the ball with the same center as $B_i$ with twice the radius.  Then,
\begin{align*}
\|P_kK\|_{L^1} = \|P_kK \chi_\Omega\|_{L^1} + \|P_kK \chi_{\Omega^c}\|_{L^1}.
\end{align*}
Inasmuch as $\mathrm{supp} \,K_i\subset B_i$ and
 the kernel of the operator $\mathcal H_k$  satisfies H\"ormander's condition \eqref{hormander}, by \cite[Inequality (2.13)]{Bourgain} 
$$ \sum_i \|P_kK_i \chi_{(B_i^*)^c}\|_{L^1}  =\sum_i \|\mathcal H_k K_i \chi_{(B_i^*)^c}\|_{L^1}  \lesssim \sum_i \|K_i\|_{L^1}.$$
Hence,
\begin{align}\label{aug7}
\|P_kK \chi_{\Omega^c}\|_{L^1} &\leq \sum_i \|P_kK_i \chi_{(B_i^*)^c}\|_{L^1}  \lesssim \sum_i \|K_i\|_{L^1} \lesssim \|F\|_{L^1}.
\end{align}
Since $F=H+K+F_{p,q}$, from equation \eqref{aug1} we deduce that
\begin{align*}
P_kK = K+F_{p,q}+H - P_k(F_{p,q}+H).
\end{align*}
Therefore,   the boundedness of $P_k$ on $L^{p,q}(\rn)$, the  H\"older type  inequality in the Lorentz spaces \eqref{holderlor}, and the fact that $\|\chi_{\Omega}\|_{L^{p',q'}}$ is independent of $q$ yield
\begin{align*}
\|P_kK\chi_\Omega\|_{L^1} &\leq \|K\|_{L^1} + |\Omega|^{1/p'} \|F_{p,q}+H\|_{L^{p,q}}\\
&\lesssim \|F_1\|_{L^1} + |\Omega|^{1/p'} t^{-1} \lesssim 1 + \lambda^{-1/p'}t^{-1}  \approx 1
\end{align*}
by our choice of $\lambda$.  This completes the proof of the bound \eqref{final_claim} and thus also the proof of the first equivalence in \eqref{sep1higher}.
\\ The second equivalence holds thanks to Holmsted's formulas \cite[Theorem 4.1]{Hol}.
\iffalse

the well-known formula for the $K$-functional for the spaces $L^1(\rn, \rM)$ and $L^\infty(\rn, \rM)$ and of the
reiteration theorem for $K$-functionals  \cite[Corollary 3.6.2]{BerghLofstrom}
\fi
\end{proof}}

\section{A pivotal rearrangement inequality}\label{S:rearrangement}

A fundamental  tool in our proof of Theorem   \ref{characterization} is an estimate, in rearrangement form, for Riesz potentials,  composed with Calder\'on-Zygmund operators and linear operators on finite dimensional spaces, of $k$-th order divergence free vector fields. It relies upon the results of Section  \ref{sec:kfuncthigher} and 
 is the content of Theorem \ref{K-CZ} below,    the main technical advance of this contribution. 
\\ Recall that 
the Riesz potential operator 
$I_\alpha$, with $\alpha \in (0, n)$, is   defined on locally integrable functions $F: \rn \to \mathbb R^m$ as 
\begin{align*}
I_\alpha F(x) = \frac{1}{\gamma(\alpha)}\int_{\mathbb{R}^n} \frac{F(y)}{|x-y|^{n-\alpha}}\;dy \quad \text{for $x\in \rn$,}
\end{align*}
whenever the integral on the right-hand side is finite. Here, $n, m \in \N$, and 
 $\gamma(\alpha)$ is 
 a suitable normalization constant.

%Its proof in turn relies upon Theorem \ref{lemma2-higherorder},  which generalizes  Theorem \ref{lemma2} dealing with the case  $k=1$. 
%They provide us with the $K$-functional for couples of Lorentz spaces of $k$-th and first order divergence free vector fields, respectively,
%and
%are stated an proved in Sections \ref{sec:kfuncthigher} and \ref{sec:kfunct}.
  {\color{black}
 \begin{theorem}\label{K-CZ}
  Let $k \in \N$, $n, l, \ell \geq 2$, and let $N$ be as in \eqref{N}. Let $\alpha \in (0,n)$. Let   $\{T^\beta: \beta \in \N^n,|\beta|=k\}$ be a family of linear operators such that 
  $T^\beta: L^{p}(\rn,\R^l) \to  L^{p}(\rn,\mathbb{R}^\ell)$ for every $p\in (1,\infty)$.   Assume that 
 \begin{equation}
     \label{commute}
     I_\alpha T^\beta F^\beta = T^\beta I_\alpha F^\beta
 \end{equation}
 for every $F\in L^1 (\rn,\R^{N\times l})$, where $F=[F^\beta]$ with rows $(F^\beta)_i$ for $i = 1,\ldots,l$.  Set
 \begin{align*}
   G= \sum_{|\beta|=k} I_\alpha T^\beta F^\beta.
 \end{align*}
%\begin{align*}
  %  T&:  L^1(\rn,\om) \to L^{1,\infty}(\rn,\mathbb{R}^\ell)\\
  %  T&:L^{p,q}(\rn,\om) \to  L^{p,q}(\rn,\mathbb{R}^\ell)
%\end{align*}
%for $1<p<+\infty$, $1\leq q \leq +\infty$
Then, there exists a positive constant $c=c(\alpha, \{T^\beta\})$ such that
\begin{align}\label{nov100}
\int_0^t  s^{-\frac{\alpha}{n}} G^*(s)\, ds
\leq c\int_0^{t}  s^{-\frac{\alpha}{n}} \int_{s}^{\infty} F^*(r)r^{-1+\alpha/n}\,drds  \quad\text{for $t>0$,}
\end{align}

\iffalse

\begin{align*}
    K\big(I_\alpha T Q  F, t;   L^{\frac n{n-\alpha}, 1}(\rn, \mathbb{R}^\ell),  L^\infty(\rn, \om)\big)
\leq c K\big(F, t/c;   L^1_{\operatorname*{div_k}}(\rn, \mathbb{R}^N),   L^{\frac n\alpha, 1}_{\operatorname*{div_k}}(\rn, \mathbb{R}^N)\big)
\end{align*}
\fi
for every 
%\todo[inline]{A: we shall apply the inequality \eqref{nov100} to functions $F$ which are not required to belong to $L^1(\rn, \mathbb{R}^N)+ L^{\frac n\alpha, 1}(\rn, \mathbb{R}^N)$. For these functions the right-hand side is infinite and the inequality trivially holds. What do we have to assume on $F$  for $I_\alpha T Q  F$ to be defined? }
$F \in  L^1(\rn, \mathbb{R}^{N\times l})+ L^{\frac n\alpha, 1}(\rn, \mathbb{R}^{N\times l})$ such that ${\rm div_k}(F^\beta)_i=0$ for $i =1, \ldots,l$.

 \end{theorem}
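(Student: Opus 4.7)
The plan is to recast the inequality \eqref{nov100} as a K-functional estimate for the linear operator $SF := G = \sum_{|\beta|=k} I_\alpha T^\beta F^\beta$ between the couples $(L^1, L^{n/\alpha,1})$ and $(L^{n/(n-\alpha),1}, L^\infty)$, exploiting Theorem~\ref{lemma2-higherorder} to incorporate the divergence-free constraint. Setting $\tau := t^{(n-\alpha)/n}$ and applying Fubini's theorem to the right-hand side of \eqref{nov100}, one finds
\begin{equation*}
\int_0^t s^{-\alpha/n}\int_s^\infty F^*(r) r^{-1+\alpha/n}\,dr\,ds \approx \int_0^{\tau^{n/(n-\alpha)}} F^*(r)\,dr + \tau\int_{\tau^{n/(n-\alpha)}}^\infty F^*(r) r^{-1+\alpha/n}\,dr,
\end{equation*}
which by the second equivalence of \eqref{sep1higher} (with $p = n/\alpha$, $q=1$) agrees, up to multiplicative constants, with $K(F, \tau; L^1(\rn,\om), L^{n/\alpha, 1}(\rn,\om))$. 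Likewise, Holmstedt's formula for the couple $(L^{n/(n-\alpha),1}, L^\infty)$ together with the monotonicity of $G^*$ shows that the left-hand side of \eqref{nov100} is equivalent to $K(G, \tau; L^{n/(n-\alpha),1}, L^\infty)$. Hence \eqref{nov100} reduces to the K-functional inequality
\begin{equation}\label{pl:Kineq}
K(G, \tau; L^{n/(n-\alpha),1}, L^\infty) \leq c\, K(F, \tau; L^1, L^{n/\alpha,1}) \qquad\text{for all } \tau > 0.
\end{equation}

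Since every row of $F$ is $k$-th order divergence-free, Theorem~\ref{lemma2-higherorder} and Remark~\ref{rem-sep} supply, for each $\tau > 0$, a nearly optimal decomposition $F = F_1 + F_2$ in which $F_1 \in L^1_{\operatorname*{div}_k}(\rn, \mathbb{R}^{N\times l})$ and $F_2 \in L^{n/\alpha,1}_{\operatorname*{div}_k}(\rn, \mathbb{R}^{N\times l})$ are themselves $k$-th order divergence-free and satisfy $\|F_1\|_{L^1} + \tau\|F_2\|_{L^{n/\alpha,1}} \lesssim K(F, \tau; L^1, L^{n/\alpha,1})$. Writing $G = SF_1 + SF_2$, the inequality \eqref{pl:Kineq} will follow from the two endpoint bounds
\begin{equation}\label{pl:endpoints}
\|SF_2\|_{L^\infty(\rn,\mathbb{R}^\ell)} \lesssim \|F_2\|_{L^{n/\alpha,1}(\rn,\mathbb{R}^{N\times l})}, \qquad \|SF_1\|_{L^{n/(n-\alpha),1}(\rn,\mathbb{R}^\ell)} \lesssim \|F_1\|_{L^1(\rn,\mathbb{R}^{N\times l})}.
\end{equation}
The first bound in \eqref{pl:endpoints} is routine: the commutation \eqref{commute} gives $SF_2 = \sum_\beta T^\beta I_\alpha F_2^\beta$, each $T^\beta$ is bounded on $L^{n/\alpha,1}$ by real interpolation from its $L^p$-boundedness for $p\in(1,\infty)$, and $I_\alpha : L^{n/\alpha,1}(\rn) \to L^\infty(\rn)$ is classical.

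The main obstacle is the second bound in \eqref{pl:endpoints}, an endpoint $L^1$-estimate for $S$ on $k$-th order divergence-free fields with values in the Lorentz space $L^{n/(n-\alpha),1}$. Invoking \eqref{commute} once more and the $L^{n/(n-\alpha),1}$-boundedness of each $T^\beta$, the matter reduces to showing
\begin{equation*}
\|I_\alpha (F_1^\beta)_i\|_{L^{n/(n-\alpha),1}(\rn)} \lesssim \|F_1\|_{L^1(\rn, \mathbb{R}^{N\times l})}
\end{equation*}
for every $i=1,\dots,l$ and every $|\beta|=k$, where each $(F_1^\beta)_i$ is a $k$-th order divergence-free $L^1$ field with values in $\om$. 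This is the Lorentz-refined higher-order analog of the Bourgain--Brezis--Van Schaftingen endpoint $L^1$-Sobolev inequality, and will be obtained by combining a Calder\'on--Zygmund decomposition of $(F_1^\beta)_i$ adapted to the divergence-free constraint with the Hörmander-type control on the kernel of $I_\alpha\,\mathcal H_k$ exploited in the proof of Theorem~\ref{lemma2-higherorder}. Once this estimate is in hand, \eqref{pl:Kineq} follows from \eqref{pl:endpoints} by splitting $G = SF_1 + SF_2$, and the rearrangement inequality \eqref{nov100} is a consequence of the K-functional identifications established in the first paragraph.
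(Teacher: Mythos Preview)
Your overall architecture is exactly the one the paper uses: rewrite both sides of \eqref{nov100} as K-functionals via Fubini and Holmstedt, invoke Theorem~\ref{lemma2-higherorder} to obtain a divergence-free decomposition $F=F_1+F_2$, and reduce to the two endpoint bounds in your display \eqref{pl:endpoints}. The $L^\infty$ endpoint is handled identically.

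The one substantive gap is your treatment of the $L^1$ endpoint. The bound
\[
\|I_\alpha H\|_{L^{n/(n-\alpha),1}(\rn,\om)} \lesssim \|H\|_{L^1(\rn,\om)} \qquad\text{for } H\in L^1_{\operatorname*{div}_k}(\rn,\om)
\]
is not a consequence of the Calder\'on--Zygmund/H\"ormander machinery of Theorem~\ref{lemma2-higherorder}; the kernel of $I_\alpha\mathcal H_k$ behaves like $|x|^{-(n-\alpha)}$, not $|x|^{-n}$, so the usual H\"ormander condition \eqref{hormander} and the argument via \cite[Inequality (2.13)]{Bourgain} do not apply. This Lorentz-refined endpoint estimate is a deep result in its own right: it is exactly \cite[Theorem~1]{Stolyarov}, which the paper invokes directly (see \eqref{dec30}). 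Once you cite Stolyarov's theorem for this step, your proof coincides with the paper's. Also note a small imprecision in your reduction: the divergence-free condition lives on the $\mathbb R^N$-valued row $(F_1)_i=[(F_1^\beta)_i]_{|\beta|=k}$, not on a single component $(F_1^\beta)_i$, so the estimate should be stated for $I_\alpha$ acting on the full row, as the paper does in \eqref{nov110}.
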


\begin{remark}
    \label{rem-Riesz} {\rm Choosing the identity operators as $T^\beta$ in Theorem \ref{K-CZ} results in the estimate \eqref{nov100} for $I_\alpha F$.}
\end{remark}

\begin{remark}
    \label{rem-Kfunct}
    {\rm As will be clear from the proof, the 
    inequality \eqref{nov100} is equivalent to the $K$-functional inequality
\begin{equation}\label{dec32second}
K\big(G, t;   L^{\frac{n}{n-\alpha}, 1}(\rn, \R^l),  L^\infty(\rn, \R^l)\big)
\leq c K\big(F, t/c;   L^1_{\operatorname*{div_k}}(\rn, \R^{N\times l}),   L^{\frac n\alpha, 1}_{\operatorname*{div_k}}(\rn, \R^{N\times l})\big)
\end{equation}
for every $F \in  L^1(\rn, \mathbb{R}^{N\times l})+ L^{\frac n\alpha, 1}(\rn, \mathbb{R}^{N\times l})$ such that 
${\rm div_k}(F^\beta)_i=0$.}
\end{remark}

\begin{proof}[Proof of Theorem \ref{K-CZ}] 
%\todo[inline]{A: observe that $F \in  L^1 (\rn, \mathbb{R}^N)+ L^{\frac n\alpha, 1} (\rn, \mathbb{R}^N)$ implies that $I_\alpha T QF$ is well-defined a.e.}
%We begin by observing that the composition of operators $I_\alpha T Q$ is well defined a.e. in $\rn$ at any function $F \in  L^1(\rn, \mathbb{R}^N)+ L^{\frac n\alpha, 1}(\rn, \mathbb{R}^N)$. Indeed, if $F\in  L^1(\rn, \mathbb{R}^N)$, then, trivially, $QF\in L^1(\rn, \mathbb{R}^N)$ as well. Thus, owing to the assumption \eqref{commute}, it suffices to show that $TI_\alpha QF$ is well defined a.e..  The inequality \eqref{weaktype}
%implies that $I_\alpha QF \in L^{\frac n{n-\alpha},\infty}(\rn, \mathbb{R}^N)
%$. The boundedness properties of $T$ and a standard interpolation theorem for intermediate Lorentz spaces then ensures that  $TI_\alpha QF$ is well defined and finite a.e., and $TI_\alpha QF \in L^{\frac n{n-\alpha},\infty}(\rn, \mathbb{R}^N)
%$.
%\\
%Next, assume that $F\in  L^{\frac n\alpha, 1} (\rn, \mathbb{R}^N)$. Then $QF\in  L^{\frac n\alpha, 1} (\rn, \mathbb{R}^N)$ as well, and the same interpolation theorem mentioned above tells us that $TQF\in  L^{\frac n\alpha, 1} (\rn, \mathbb{R}^N)$. The Hardy-Littlewood inequality for rearrangements and this inclusion ensure that 
%\begin{equation}
%    \label{dec25-3} 
%\int_{\rn}\frac{|TQF(x)|}{(1+|x|)^{n-\alpha}}\, dx \lesssim
%\int_0^\infty %\frac{(TQF)^*(s)}{(1+s)^{\frac n\alpha -1}}\, ds <\infty.    
%\end{equation}
Throughout this proof, the constants in the relations \lq\lq $\approx$"
 and \lq\lq $\lesssim$" depend only on $\frac n\alpha$ and $k$.
Observe that   $F \in  L^1 (\rn, \mathbb{R}^N)+ L^{\frac n\alpha, 1} (\rn, \mathbb{R}^N)$ if and only if 
$$\int_0^{t} F^*(s)\;ds + t^{1-\alpha/n} \int_{t}^\infty  s^{-1+\frac \alpha n} F^*(s)\;ds<\infty \quad \text{for $t>0$.}$$
This is a consequence  Holmsted's formulas -- see the second equivalence in \eqref{sep1higher}.
On the other hand, an application of Fubini's theorem tells us that 
\begin{align}
    \label{nov130}
     \int_0^{t} F^*(s)\;ds + t^{1-\alpha/n} \int_{t}^\infty  s^{-1+\frac \alpha n} F^*(s)\;ds  =   \frac {n-\alpha}n \int_0^{t} s^{-\alpha/n}   \int_{s}^\infty r^{-1+\alpha/n}F^*(r)\;drds
\end{align}
for $t>0$.
%Hence, we may assume that $F \in  L^1 (\rn, \mathbb{R}^N)+ L^{\frac n\alpha, 1} (\rn, \mathbb{R}^N)$, otherwise the right-hand side of the inequality \eqref{nov100}  is infinite, and the inequality holds trivially.
\\ 
    From \cite[Theorem 1]{Stolyarov} one has that
 \begin{equation}\label{dec30}
 I_\alpha : L^1_{\operatorname*{div_k}}(\rn, \R^{N\times l}) \to L^{\frac n{n-\alpha}, 1}(\rn, \R^{N\times l}),
 \end{equation}
  with norm depending on $n$ and $\alpha$.  
  % As $T$ is a Calder\'on-Zygmund operator, Mihlin's multiplier theorem \cite[Theorem 6.2.7 on p.~446]{grafakos} and interpolation \cite[Theorem 1.4.19 on p.~61]{grafakos}, imply that $T$ is bounded on the Lorentz spaces $L^{p,q}$ for $1<p<+\infty$. 
  Moreover,
  thanks to the boundedness of $T^\beta$ in $L^p$ for every $p>1$, a standard interpolation theorem for intermediate Lorentz spaces ensures that $T^\beta: L^{\frac {n}{n-\alpha}, 1}(\mathbb{R}^n,\mathbb{R}^l) \to L^{\frac {n}{n-\alpha}, 1}(\mathbb{R}^n,\mathbb{R}^\ell)$.
The commuting property  \eqref{commute} and this boundedness property of $T^\beta$, in combination with \eqref{dec30}, yield
%\todo[inline]{A: the first inequality in \eqref{nov110} requires that $I_\alpha$ and $T$ commute. Is this guaranteed under our assumptions on $F$? Assume, for instance, that $F\in L^1$. Then we only know that $TQF \in L^{1,\infty}$. I am afraid that this does not ensure that the integral in the definition of $I_\alpha TQF$ is convergent a.e.}
 \begin{align}\label{nov110}
 \left\|\sum_{|\beta|=k} |I_\alpha T^\beta F^\beta| \right\|_{L^{\frac {n}{n-\alpha}, 1}(\rn)}&  =  \left\|\sum_{|\beta|=k} | T^\beta I_\alpha F^\beta| \right\|_{L^{\frac {n}{n-\alpha}, 1}(\rn)} \\ \nonumber 
 &\lesssim  \sum_{|\beta|=k} \left\| I_\alpha F^\beta \right\|_{L^{\frac {n}{n-\alpha}, 1}(\rn, \mathbb{R}^l)} \\ \nonumber
 &\lesssim \|I_\alpha F \|_{L^{\frac {n}{n-\alpha}, 1}(\rn, \R^{N\times l})}\\ \nonumber
 &\lesssim \|F \|_{L^{1}(\rn, \R^{N\times l})}.
\end{align}
for all $F \in  L^1_{\rm div_k}(\rn, \R^{N\times l})$. 
\\    The boundedness of $T^\beta$ in $L^p$ for every $p>1$ and the interpolation theorem for intermediate Lorentz spaces again ensure that $T^\beta: L^{\frac {n}{\alpha}, 1}(\rn,\R^\ell) \to L^{\frac {n}{\alpha}, 1}(\rn,  \R^\ell)$. This piece of information and  the boundedness of $I_\alpha : L^{\frac {n}{\alpha}, 1}(\rn) \to L^{\infty}(\rn)$  enable one to deduce that
\begin{align}\label{nov111}
 \left\|\sum_{|\beta|=k} |I_\alpha T^\beta F^\beta| \right\|_{L^{\infty}(\rn)} &\lesssim \sum_{|\beta|=k} \|T^\beta F^\beta \|_{L^{\frac {n}{\alpha}, 1}(\rn,  \R^\ell)} \\ \nonumber 
 &\lesssim  \sum_{|\beta|=k} \|F^\beta \|_{L^{\frac {n}{\alpha}, 1}(\rn, \mathbb{R}^l)} \\ \nonumber 
 &\lesssim  \| F \|_{L^{\frac {n}{\alpha}, 1}(\rn, \mathbb{R}^{N\times l})}
\end{align}
 for every $F \in  L^{\frac n\alpha, 1} (\rn, \mathbb{R}^{N\times l})$, and hence, a fortiori, for $F \in  L^{\frac n\alpha, 1}_{\rm div_k}(\rn, \mathbb{R}^{N\times l})$.
 \\
Let $F \in  L^1(\rn, \mathbb{R}^{N\times l})+ L^{\frac n\alpha, 1}(\rn, \mathbb{R}^{N\times l})$  be such that ${\rm div_k}F=0$ row-wise.   Here, for convenience of notation, we have suppressed the dependence in $i=1,\ldots,l$. By Theorem \ref{lemma2-higherorder}, such a function $F$ admits a decomposition $F=F_1 + F_{n/\alpha,1}$, with $F_1\in L^{1}_{\rm div_k}(\rn, \mathbb{R}^{N\times l})$ and $F_{n/\alpha,1}\in  L^{\frac n\alpha, 1}_{\rm div_k}(\rn, \mathbb{R}^{N\times l})$ fulfilling the estimate:
\begin{align}
    \label{nov134}
    \|F_1\|_{L^{1}(\rn, \mathbb{R}^{N\times l})}+ t \|F_{n/\alpha,1}\|_{L^{\frac n\alpha, 1}(\rn, \mathbb{R}^{N\times l})} \lesssim  \int_0^{t} F^*(s)\;ds + t^{1-\alpha/n} \int_{t}^\infty  s^{-1+\frac \alpha n} F^*(s)\;ds
\end{align}
for $t>0$.
\\ Equations \eqref{nov110} and \eqref{nov111} ensure that
$\sum_{|\beta|=k} I_\alpha T^\beta F^\beta_1 \in L^{\frac{n}{n-\alpha}, 1}(\rn,\R^l)$ and $\sum_{|\beta|=k}  I_\alpha T^\beta F^\beta_{n/\alpha,1} \in L^\infty(\rn,\R^l)$.
Therefore,
\begin{align}
   \nonumber
    K\big( G, t;   L^{\frac{n}{n-\alpha}, 1}(\rn,\R^l),  L^\infty(\rn, \R^l)\big) & \leq \left\|\sum_{|\beta|=k} I_\alpha T^\beta F^\beta_1 \right\|_{L^{\frac{n}{n-\alpha}, 1}(\rn, \R^l)} + t \left\|\sum_{|\beta|=k} I_\alpha T^\beta F_{n/\alpha,1}\right\|_{ L^\infty(\rn, \R^l)}
    \\ & \lesssim \|F_1\|_{L^{1}(\rn, \mathbb{R}^{N\times l})}+ t \|F_{n/\alpha}\|_{L^{\frac n\alpha, 1}(\rn, \mathbb{R}^{N\times l})}.
    \label{nov132}
\end{align}
 \iffalse
\todo[inline]{A: \eqref{dec32prime} 
it is not straightforward. We have to use the special decomposition from \eqref{aug1} for a function such that $div F=0$, where $F=F_1+F_pq$, with $div F_1=0$ and $div F_{pq} =0$.}
 
The property \eqref{dec37} of the $K$-functional gives
\begin{equation}\label{dec32prime}
K\big({\color{red}I_\alpha T QF}, t;   L^{\frac{n}{n-\alpha}, 1}(\rn, \om),  L^\infty(\rn, \om)\big)
\leq c K\big(F, t/c;   L^1_{\operatorname*{div_k}}(\rn, \om),   L^{\frac n\alpha, 1}_{\operatorname*{div_k}}(\rn, \om)\big),
\end{equation}
for some positive constant  $c=c(n,\alpha, k)$ and for every $F \in  L^1_{\operatorname*{div_k}}(\rn, \mathbb{R}^N)+ L^{\frac n\alpha, 1}_{\operatorname*{div_k}}(\rn, \mathbb{R}^N)$.  
\fi
Thanks to \cite[Corollary 2.3, Chapter 5]{BennettSharpley}, 
\begin{equation}\label{dec33}
K\big( G, t;   L^{\frac{n}{n-\alpha}, 1}(\rn,\R^l),  L^\infty(\rn, \R^l)\big) \approx \int_0^{t^{\frac n{n-\alpha}}} s^{-\frac{\alpha}{n}}G^*(s) \, ds \quad\text{for $t>0$,}
\end{equation} 
for $G\in L^{\frac n{n-\alpha}, 1}(\rn, \om)+  L^\infty(\rn, \om)$. 
\\ Combining equations \eqref{nov134}--\eqref{dec33}   with \eqref{nov130} yields the inequality \eqref{nov100}.
\end{proof}
 
 }

 \section{Proofs of the main results}\label{proof}

 The following 
 result from \cite[Proof of Theorem A]{KermanPick} (see also \cite[Proof of Theorem 4.1]{CPS_Frostman} for an alternative simpler argument) is needed in  our proof of Theorem \ref{characterization}. Thanks to a representation formula in terms of any elliptic canceling operator, it enables us to transfer the information contained in the inequalities \eqref{nov100} and \eqref{sobolev3} into the Sobolev inequality \eqref{characterization}.
 %
 %
 %
 %It enables one to transfer the information contained in a pointwise  inequality  between Hardy type operators applied to two functions satisfying the inequality \eqref{sobolev3} for some rearrangement-invariant function norms $\|\cdot\|_{X(0,\infty)}$ and $\|\cdot\|_{Y(0,\infty)}$ into 
 %an inequality between the same  norms of the two functions.

 \begin{theoremalph}\label{KermanPick} Let $n\in\mathbb N$ and $1 \leq k <n$. Let $\|\cdot\|_{X(0,\infty)}$ and $\|\cdot\|_{Y(0,\infty)}$ be  rearrangement-invariant function norms such that the inequality \eqref{sobolev3} holds. Suppose that the functions $f, g \in \mathcal M (0, \infty)$ are such that
\begin{align}\label{july21}
\int_0^t  s^{-\frac{k}{n}} g^*(s)\, ds
\leq c\int_0^{t}  s^{-\frac{k}{n}} \int_{s/c}^{\infty} f^*(r)r^{-1+k/n}\,drds  \quad\text{for $t>0$,}
\end{align}
for some positive constant   $c$.  Then
\begin{align}\label{july22}
\|g\|_{Y(0, \infty)} \leq c' \|f\|_{X(0, \infty)},
\end{align}
for a suitable constant $c'=c'(c, \frac nk)$. 
 \end{theoremalph}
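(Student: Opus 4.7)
The plan is to reduce \eqref{july21} to a Hardy--Littlewood--P\'olya comparison between two non-increasing functions, and then to couple Hardy's lemma \eqref{hardy} with \eqref{sobolev3} to conclude. The key observation is that \eqref{july21} is naturally a statement about monotone functions. Setting
\[
\phi_1(s):=s^{-k/n}g^*(s),\qquad
\phi_2(s):=s^{-k/n}\int_{s/c}^\infty r^{-1+k/n}f^*(r)\,dr,
\]
both $\phi_1$ and $\phi_2$ are non-increasing on $(0,\infty)$, as products of positive non-increasing factors ($s^{-k/n}$ is decreasing because $k<n$, $g^*$ is a rearrangement, and the tail integral shrinks in $s$). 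Therefore $\phi_i=\phi_i^{\ast}$, and the hypothesis \eqref{july21} is exactly
\[
\phi_1^{\ast\ast}(t)\le c\,\phi_2^{\ast\ast}(t)\qquad\text{for every }t>0.
\]
Hardy's lemma \eqref{hardy} then delivers $\|\phi_1\|_W\le c\,\|\phi_2\|_W$ for every rearrangement-invariant function norm $\|\cdot\|_W$.

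The second step is to choose $W$ so as to recover $\|g\|_Y$ on the left and the tail integral $\int_{\cdot/c}^\infty r^{-1+k/n}f^*(r)\,dr$ on the right. The natural choice is the functional obtained from $Y$ by multiplication with the weight $t^{k/n}$,
\[
\|\psi\|_{W(0,\infty)}:=\bigl\|\,t^{k/n}\psi^{\ast}(t)\,\bigr\|_{Y(0,\infty)},\qquad\psi\in\Mpl(0,\infty),
\]
which is a rearrangement-invariant (quasi-)function norm on which Hardy's lemma remains valid because dilations act boundedly on any r.i. norm, with constants controlled by the Boyd indices of $Y$. Since $\phi_1$ and $\phi_2$ are non-increasing, one has $\phi_i=\phi_i^*$, and hence
\[
\|\phi_1\|_W=\|\,t^{k/n}\cdot t^{-k/n}g^*(t)\,\|_Y=\|g\|_Y,\qquad
\|\phi_2\|_W=\bigg\|\int_{t/c}^\infty r^{-1+k/n}f^*(r)\,dr\bigg\|_{Y(0,\infty)}.
\]

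Third, the constant $c$ in the lower limit is absorbed via dilation boundedness of $Y$: letting $\Psi(t):=\int_t^\infty r^{-1+k/n}f^*(r)\,dr$, one has $\|\Psi(\cdot/c)\|_Y\le h_Y(c)\,\|\Psi\|_Y$, where $h_Y$ is the dilation function of $Y$. The hypothesis \eqref{sobolev3} applied to the non-negative $f^*$ then yields $\|\Psi\|_Y\le c_3\|f^*\|_X=c_3\|f\|_X$, and chaining the three estimates produces \eqref{july22} with $c'=c'(c,n/k)$. The main technical obstacle lies in the second step, namely in verifying rigorously that $\|\cdot\|_W$ is a bona fide rearrangement-invariant (quasi-)function norm to which Hardy's lemma genuinely applies; this is standard material from the theory of r.i. spaces, but the constants must be traced carefully since the triangle inequality for $\|\cdot\|_W$ holds only up to a dilation factor coming from $Y$.
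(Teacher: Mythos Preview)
The paper does not supply its own proof of this statement; it is quoted verbatim from the proofs in \cite{KermanPick} and \cite{CPS_Frostman}. So the comparison is between your argument and those external arguments.

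Your proof has a genuine gap at the second step. You claim that Hardy's lemma \eqref{hardy} applies to the functional
\[
\|\psi\|_{W}=\bigl\|t^{k/n}\psi^*(t)\bigr\|_{Y},
\]
but this is not so, and your justification via ``dilation boundedness'' is beside the point. Dilation boundedness of $Y$ gives at best a quasi-triangle inequality for $W$; Hardy's lemma is a different matter. Its proof for a rearrangement-invariant norm $Z$ rests on the representation $\|\phi\|_{Z}=\sup\int\phi^*\,w$ over \emph{non-increasing} $w$ (coming from the associate space). For your $W$, duality produces weights of the form $t^{k/n}\eta(t)$ with $\eta\in Y'$, and these are not non-increasing; the increasing factor $t^{k/n}$ works against the direction of the Hardy--Littlewood--P\'olya majorization. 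Concretely: with $\alpha=k/n$, take $\phi_2(s)=s^{-\alpha}\chi_{(0,1)}(s)$ and $\phi_1(s)=M^{-1/2}s^{-\alpha}\chi_{(0,M)}(s)$ for large $M$. Both $\phi_i$ and $t^{\alpha}\phi_i$ are non-increasing, and $\int_0^t\phi_1\le\int_0^t\phi_2$ for every $t>0$; yet $\|t^{\alpha}\phi_1\|_{L^1}=\sqrt{M}$ while $\|t^{\alpha}\phi_2\|_{L^1}=1$. So the implication $\phi_1^{**}\le c\,\phi_2^{**}\Rightarrow\|\phi_1\|_W\le c'\|\phi_2\|_W$ fails with any universal (or $Y$-dependent) constant. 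Of course $Y=L^1$ does not satisfy \eqref{sobolev3}, but that is exactly the point: your argument postpones the use of \eqref{sobolev3} until after the Hardy step, and the Hardy step is false without it.

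The arguments in the cited references proceed differently. They exploit \eqref{sobolev3} at the outset, in its dual form $\|s^{k/n}h^{**}(s)\|_{X'}\lesssim\|h\|_{Y'}$, and pair this with the identity
\[
\int_0^t s^{-k/n}\int_s^\infty r^{k/n-1}f^*(r)\,dr\,ds=\tfrac{n}{n-k}\Bigl[\int_0^t f^*+t^{1-k/n}\int_t^\infty r^{k/n-1}f^*\Bigr]
\]
(this is \eqref{nov130} in the paper). The hypothesis \eqref{july21} is then combined with the associate-norm representation of $\|g\|_Y$ and a Fubini argument that routes the estimate through $X'$, not through a single weighted quasi-norm. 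In short, the r\^ole of \eqref{sobolev3} is essential and cannot be deferred to a final ``apply the assumed inequality'' step; you should bring it in before attempting any majorization of $g^*$ against $\Phi$.
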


The $k$-th order divergence operator is a special member of the class 
of co-canceling differential operators introduced in \cite[Definition 1.3]{VS3} and defined below. The curl is another classical instance of a first order co-canceling operator. Co-canceling operators come into play in the proof of Theorem \ref{characterization}.

\begin{definitionalph}[{\bf Co-canceling operator}]\label{co-canc}
{\rm Let $n, m \geq 2$ and $l \geq 1$. 
%\todo[inline]{A: I have changed the notation  $L$ into $\mathcal L$, for consistency of notation}
  A linear homogeneous $k$-th order constant coefficient differential operator 
  $
 \mathcal L(D)$ mapping $\mathbb{R}^m$-valued functions to $\mathbb{R}^l$-valued functions
is said to be co-canceling if there exist linear operators  $L_\beta: \rM \to \mathbb R^l$, with $\beta \in \mathbb N^n$, such that
\begin{align}\label{operator}
    \mathcal L(D)F = \sum_{\beta \in \mathbb N^n
,\, |\beta|=k}L_\beta(\partial ^\beta F)
\end{align}
for $F \in C^\infty(\mathbb{R}^n,\mathbb{R}^m)$, and 
\begin{align*}
\bigcap_{\xi\in\mathbb{R}^n \setminus \{0\}}\ker \mathcal L(\xi)=\{0\},
\end{align*}
where  $\mathcal L(\xi)$ denotes the symbol map of $\mathcal L(D)$ in terms of Fourier transforms.}
\end{definitionalph}

\begin{proof}[Proof of Theorem \ref{characterization}]
The implication $\lq\lq (i) \Rightarrow (ii)"$ follows from the pointwise inequality
\begin{align}
    \label{dec24-30}
    |\mathcal A_k(D)u| \lesssim |\nabla ^ku|,
\end{align}
which holds for every operator $\mathcal A_k$ and every function $u$, up to a multiplicative constant depending on $\mathcal A_k$.
\\ The implication $\lq\lq (ii) \Rightarrow (iii)"$ is established in  \cite{mihula}, where    results of \cite{KermanPick, cianchi-pick-slavikova} for functions defined on  domains with finite measure are generalized to Sobolev type spaces defined in $\rn$.
\\ The implication $\lq\lq (iii) \Rightarrow (i)"$ is the  novelty of this theorem. 
\iffalse
\todo[inline]{A: since $T_k=TI_k$, why not to write the representation formula as $u= TI_k  A(D)u$? The statement of Theorem \ref{K-CZ} would be stated accordingly, thus avoiding the question of the commutation of the operators.}
\todo[inline]{Daniel:  The difficulty is that there are two estimates.  For both estimates, we want to use the boundedness of the Riesz transforms on $L^{p,q}$.  However, one estimate is into $L^\infty$, so we first use the boundedness of the fractional integral and then boundedness of CZO.  The other is from $L^1$, so we first use the boundedness of CZO and then the Riesz potential.  Actually, in practice we could just write $T_k$ and then there are two theorems that are true - the fractional integral operator $T_k$ is itself directly bounded on both spaces.  However, this result is not written anywhere and would need another proof (which would be not so difficult, just more writing).}
\fi
\iffalse

$A(D)u \in L^1 +L^{n/k,1}$ which
satisfies the condition (vanishing of lower order derivatives at infinity)
\begin{align*}
\lim_{R \to \infty} P_j(u,R) =0
\end{align*}
for every $j =0,\ldots, k-1$, where
\begin{align*}
P_j(u,R) := \frac{1}{R^{n-j}} \int_{B_{2R} \setminus B_R} |D^j u(y)|\;dy.
\end{align*}
\fi
Denote by
$T_k$  the $(-n+k)$-positively homogeneous fractional integral operator given by
$$T_k=
((\mathcal A_k^*\mathcal A_k)^{-1}\mathcal A_k^*)\widecheck{\phantom{x}},$$
and by $T$  
 the Calder\'on-Zygmund operator 
 $$T = ((2\pi)^k|\cdot|^{k}(\mathcal A_k^*\mathcal A_k)^{-1}\mathcal A_k^*)\widecheck{\phantom{x}}.$$
 {\color{black}
We remark here that
%as is needed in Theorem \ref{K-CZ}, one has 
 \begin{equation}\label{dec24-15}
 I_kT= TI_k=T_k
 \end{equation}
 on $L^1(\mathbb{R}^n,\mathbb{R}^m) +L^{n/k,1}(\mathbb{R}^n,\mathbb{R}^m)$.  Indeed, the preceding computation of the Fourier symbol shows the validity of the identity \eqref{dec24-15} when acting on $C^\infty_c(\mathbb{R}^n,\mathbb{R}^m)$, and as is usual for singular integral operators, the   equation \eqref{dec24-15} for general functions in  $L^1(\mathbb{R}^n,\mathbb{R}^m)$ and $L^{n/k,1}(\mathbb{R}^n,\mathbb{R}^m)$ follows by the density of $C^\infty_c(\mathbb{R}^n,\mathbb{R}^m)$ in these spaces.
 \\
%  definition for general functions in  $L^1(\mathbb{R}^n,\mathbb{R}^m)$ and $L^{n/k,1}(\mathbb{R}^n,\mathbb{R}^m)$ is by density.
%  \todo[inline]{A: can we say here that \lq\lq equation \eqref{dec24-15} for general functions in  $L^1(\mathbb{R}^n,\mathbb{R}^m)$ and $L^{n/k,1}(\mathbb{R}^n,\mathbb{R}^m)$ follows by density"?}
% 
\iffalse

%\todo[inline]{DB: Can we sill add a reference?}
\todo[inline]{This also needs revision with the different statement of the $K$ functional Lemma}
For the reverse implication, we modify slightly the argument of Theorem \ref{sobolev}, where it suffices to establish the bound
\begin{equation}\label{dec32prime-sobolev-final}
K\big(u, t;   L^{\frac{n}{n-k}, 1}(\rn, \mathbb{R}^\ell),  L^\infty(\rn, \mathbb{R}^\ell)\big)) \lesssim K\big(\mathcal A_k(D)u, t/c;   L^1(\rn, \mathbb{R}^m),   L^{\frac {n}{k}, 1}(\rn, \mathbb{R}^m)\big).
\end{equation}
Indeed, if one takes \eqref{dec32prime-sobolev-final} for granted, the rest of the argument follows line by line as in Theorem \ref{sobolev}.

Therefore we work to prove the inequality \eqref{dec32prime-sobolev-final}.  
\fi
Next, we claim the representation formula
\begin{align}\label{representation}
u= I_k T  \mathcal A_k(D)u = T_k \mathcal A_k(D)u
\end{align}
for $u\in V_\circ^{\mathcal A_k} (L^1 +L^{n/k,1})(\rn, \mathbb R^\ell)$.
\\
 \iffalse
\todo[inline]{A: what does the following assertion mean?}
\todo[inline]{Daniel:  It is maybe a difficult question, or another question at least that I have been talking with Bogdan Raita and Dima Stolyarov about, whether $A(D)u \in L^1$ can be approximated by $C^\infty_c$.  This is close to what we are doing here but not quite.  However, for the purposes of our implementation here, we only need that an arbitrary function in $L^1$ or $L^{p,q}$ can be approximated by $C^\infty_c$ functions, without preserving differential constraints.  This is easy, it is just the construction of the Lebesgue measure that produces this Luzin type approximation.}
noting that we do not need an approximation by functions which are $\mathcal A_k(D)u$.}
%
%Note these assumptions on $u$ rule out polynomials of order $k-1$. 
\fi
Any function $u\in V_\circ^{\mathcal A_k} (L^1 +L^{n/k,1})(\rn, \mathbb R^\ell)$
can be approximated, via standard mollification, by a sequence of functions in $C^\infty_\circ(\rn, \mathbb R^\ell)$.  To verify this assertion, we begin by observing that  
our assumption
\begin{align}\label{dec24-17}
\lim_{R \to \infty} \, R^j \dashint_{B_{2R} \setminus B_R} |\nabla^j u|\;dx = 0
\end{align}
for $j=0, \dots k-1$ implies
\begin{align}\label{dec24-18}
\lim_{R \to \infty}\, R^j \dashint_{B_{4R} \setminus B_{R/2}} |\nabla^j u|\;dx = 0.
\end{align}
This is an easy consequence of the identity
\begin{align*}
B_{4R} \setminus B_{R/2} = B_{R} \setminus B_{R/2} \cup B_{2R} \setminus B_R \cup B_{4R} \setminus B_{2R},
\end{align*}
  of \eqref{dec24-17}, and of \eqref{dec24-17} applied with $R$ replaced with $R/2$ and $2R$. 
  \\ Next, given a sequence $\{\rho_h\}$ of mollifiers such that $\supp \rho_h \subset B_1$ for $h \in \N$, one has that
\begin{align}\label{dec24-19}
    \dashint_{B_{2R} \setminus B_R} |\nabla^j (u*\rho_h)(x)|\;dx &= \frac{c}{R^n}\int_{B_{2R} \setminus B_R} \left| \int_{  B_1} \nabla^j u(x-y)\rho_h(y)\;dy \right|\;dx
    \\ \nonumber
    &\leq \frac{c}{R^n} \int_{B_1} \int_{B_{2R} \setminus B_R}| \nabla^j u(x-y)|  \;dx\rho_h(y)\;dy
    \\ \nonumber
    &  \leq \frac{c}{R^n}
\int_{\rn}\int_{\rn }
    | \nabla^j u(x-y)|\chi_{B_{2R+1} \setminus B_{R-1}}(x-y)\rho_h(y)\;dy\;dx   
     \\ \nonumber
    &  =\frac{c}{R^n} \int_{B_{2R+1} \setminus B_{R-1}}| \nabla^j u|  \;dz
    \\ \nonumber
    &\leq c' \    \dashint_{ B_{4R} \setminus B_{R/2}} |\nabla^j u|\;dz
\end{align}  
for some constants $c=c(n)$ and $c'=c'(n)$, and for $j=0, \cdots , k-1$, provided that $R>2$. Equations \eqref{dec24-18} and \eqref{dec24-19} imply that $u*\rho_h \in C^\infty_\circ(\rn, \mathbb R^\ell)$ for $h \in \N$.
  \\
Thus, since singular integral operators are defined by density via smooth functions, it suffices to prove the formula \eqref{representation} for $u\in C^\infty_\circ (\mathbb{R}^n;\mathbb{R}^\ell)$. 
 In the special case when $u \in C^\infty_c(\mathbb{R}^n;\mathbb{R}^\ell)$, the formula \eqref{representation} follows by taking the Fourier transform. Assume that $u\in C^\infty_\circ (\mathbb{R}^n;\mathbb{R}^l)$. Fix $x \in \rn$.
 Given $R>0$ let $\eta_R \in C^\infty_c(B_{{ 3R/2}}(x))$ be such that
\begin{align}\label{dec24-22}
 \nonumber \eta_R &= 1 \text{ on } B_R(x) \\ \nonumber
\nabla^j \eta_R  &= 0 \text{ on } B_R(x) \text{ for } j=1,\ldots, k\\
|\nabla^j \eta_R| &\leq c \frac{1}{R^j} \text{ for } j=0,\ldots, k,
\end{align}
  for some constant $c=c(n,j)$.
An application of the identity \eqref{representation} to $u\eta_R$ yields
\begin{align*}
%u(x) = (u\eta)(x) 
u\eta_R = T_k\mathcal A_k(D)u + T_k\mathcal A_k(D)((\eta_R-1)u).
\end{align*}
Therefore, in order to establish  \eqref{representation} for $u$ it suffices to show that
\begin{align}\label
{dec24-26}
\lim_{R \to \infty} |T_k\mathcal A_k(D)((\eta_R-1)u)(x)| =0 \quad \text{for $x\in \rn$.}
\end{align}
To this end,    denote by $\mathcal T_k$ the kernel of the operator $T_k$, fix $x\in \rn$ and write
\begin{align*}
T_k\mathcal A_k(D)((\eta_R-1)u)(x) &= \int_{\mathbb{R}^n} \mathcal T_k(x-y)\cdot \mathcal A_k(D)((\eta_R-1)u)(y) \;dy\\
&= \int_{\mathbb{R}^n}  \mathcal T_k(x-y)\cdot \mathcal A_k(D)u(y)(\eta_R(y)-1) \;dy\\
&\quad +\; \sum_{j=0}^{k-1} \int_{\mathbb{R}^n}  \mathcal T_k(x-y)\cdot B^j[\nabla^ju,\nabla^{k-j}\eta_R](y)\;dy,
\end{align*}
where $B^j$, for $j=0, \dots , k-1$, are bilinear operators acting on vector spaces of appropriate dimension so that this formula preserves the product rule. 
Set
$$E_k(R)(x) = \int_{\mathbb{R}^n} \mathcal T_k(x-y)\cdot \mathcal A_k(D)u(y)(\eta_R(y)-1) \;dy
$$
and 
$$E_j(R)(x)= \int_{\mathbb{R}^n}  \mathcal T_k(x-y)\cdot B^j[\nabla^ju,\nabla^{k-j}\eta_R](y)\;dy \quad \text{for $j=0, \dots , k-1$.}$$
Equation \eqref{dec24-26} will follow if we show that  
\begin{align}\label{dec24-23}
\lim_{R\to \infty} E_j(R)(x)  = 0
\end{align}
for $j =0,\ldots, k$.  In the case $j=k$, the bound
\begin{align}\label{dec24-21}
|\mathcal T_k(x-y)| \lesssim \frac{1}{|x-y|^{n-k}} \quad \text{for  $y \neq x$}
\end{align}
implies
\begin{align}\label{dec24-20}
|E_k(R)(x)| \lesssim \int_{\mathbb{R}^n\setminus B_R(x)}  \frac{|\mathcal A_k(D)u(y)|}{|x-y|^{n-k}}|\eta_R(y)-1| \;dy,
\end{align}
  up to multiplicative constants depending on $\mathcal T_k$.
The integrand on the right-hand side of the latter inequality tends to zero pointwise as $R\to \infty$. On the other hand, the assumption $\mathcal A_k(D)u \in L^1 (\rn, \rM) +L^{n/k,1}(\rn, \rM)$ and H\"older's inequality in  Lorentz spaces 
ensure that
\begin{align*}
\chi_{\rn \setminus B_1(x)} \frac{|\mathcal A_k(D)u(y)|}{|x-y|^{n-k}} \in  L^1(\mathbb{R}^n).
\end{align*}
Thus, the right-hand side of the inequality \eqref{dec24-20} converges to $0$ as $R\to \infty$ by the dominated convergence theorem, and \eqref{dec24-23} follows for $j=k$.
\\
Turning our attention to the cases $j=0,\ldots, k-1$, thanks to equations  \eqref{dec24-22} and \eqref{dec24-21},
%\begin{align*}
%|T_k(x-y)| &\lesssim \frac{1}{|x-y|^{n-k}} \\
%|\nabla^{k-j} \eta (y)| &\lesssim \frac{1}{R^{k-j}}
%\end{align*}
and the fact that $\nabla^{k-j} \eta$ is supported in the annulus $B_{3R/2}(x)\setminus B_R(x)$, we have
\begin{align*}
|E_j(R)(x)| \lesssim \frac{1}{R^{n-j}}\int_{B_{3R/2}(x)\setminus B_R(x)} |\nabla ^ju| \;dy \lesssim R^j \dashint_{B_{2R'} \setminus B_{R'}} |\nabla ^ju|\, dy
%
%P_j(u,R')
\end{align*}
for sufficiently large $R$, where  $R'=R-|x|$,  up to multiplicative constants depending on $T_k$ and $j$. Hence, equation \eqref{dec24-23} holds also for every $j=0, \dots , k-1$. This proves equation \eqref{dec24-26}. The proof of the identity \eqref{representation} for every $u\in V_\circ^{\mathcal A_k} (L^1 +L^{n/k,1})(\rn, \mathbb R^\ell)$ is complete.
%and $A(R', 2R')$ is defined as in \eqref{XAD}.  
\\ 
%The claim follows. The formula \eqref{representation} is thus established. 
% Note that, as above, Mihlin's multiplier theorem \cite[Theorem 6.2.7 on p.~446]{grafakos} and interpolation \cite[Theorem 1.4.19 on p.~61]{grafakos}, imply that $T$ is bounded on the Lorentz spaces $L^{p,q}$ for $1<p<+\infty$. 
Now we use the cancellation condition on $\mathcal A_k(D)$ to find a co-canceling annihilator.  In particular, by \cite[Proposition 4.2 on p.~888]{VS3}, there exists a linear homogeneous co-canceling operator  $\mathcal L(D)$ of some order $k'$ such that $\mathcal L(D) \mathcal A_k(D)u=0$.
{
 As in \cite[Lemma 2.5]{VS3}, one has that
 $$
 \mathcal L(D)\mathcal A_k(D)u=\sum_{\beta \in \mathbb N^n, |\beta|=k'} L_\beta\partial^\beta \mathcal A_k(D)u=\sum_{\beta \in \mathbb N^n, |\beta|=k'} \partial^\beta (L_\beta \mathcal A_k(D)u)   =0
 %\vec{0},
 $$ 
 for  suitable linear maps $L_\beta \in\operatorname{Lin}(\R^m, \R^l)\simeq\R^{l\times m}$ independent of $u$, and suitable  $l \in \mathbb N$.
 Let us write $L\mathcal A_k(D)u = [L_\beta \mathcal A_k(D)u]_{|\beta|=k'} \in \operatorname{Lin} (\mathbb{R}^n, \R^{N\times l})$ for the collection of $l$ maps with values in $\R^{N}$.  In particular, one can regard $[L_\beta \mathcal A_k(D)u]$ as $l$ rows  $\{(L_\beta \mathcal A_k(D)u)_i\}_{i=1}^l$ such that $\operatorname*{div}_{k'} (L_\beta \mathcal A_k(D)u)_i=0$ for each $i=1,\ldots,l$. 
\\
Owing to \cite[Lemma 2.5]{VS3} there exist a family of maps $K_\beta \in \operatorname{Lin}(\mathbb{R}^l, \mathbb{R}^m)$ such that
%
%\begin{align*}
%\mathcal A_k(D)u = \sum_{|\beta|=k'} K_\beta L_\beta \mathcal A_k(D)u.
%\end{align*}
}
%
% With the same notation as in the argument of Lemma \ref{reductionk}, {\color{black} one can find,
%   for suitable  $l \in \mathbb N$, a family of   maps $\{L_{\beta}\}_{|\beta|=k'} \subset \operatorname{Lin}(\R^m, \R^l)$ independent of $u$,  such that
%\begin{align*}
%\operatorname*{div_{k'}}[ L_\beta \mathcal A_k(D)u] =0.
%\end{align*}
%Moreover, there exists a family of maps
%$\{K_{\beta}\}_{|\beta|=k'}\subset \operatorname{Lin}(\R^l, \R^m)$ such that
\begin{align}\label{expansion}
\mathcal A_k(D)u &= \sum_{|\beta|=k'} K_\beta L_\beta \mathcal A_k(D)u.
%= \sum_{|\beta| = k'} c_\beta^{-1} K_\beta F_\beta.
\end{align}
%For convenience of display in the sequel, we define
%\begin{align*}
%F_\beta = c_\beta L_\beta \mathcal A_k(D)u,
%\end{align*}
%noting that
%\begin{align*}
%\operatorname*{div_{k'}}F_\beta =0.
%\end{align*}
%In particular, 
We can compactly combine the representation \eqref{representation} and the expansion \eqref{expansion} as
\begin{align}\label{another_expansion}
u  = \sum_{|\beta|=k'}  I_k T^\beta L_\beta \mathcal A_k(D)u,
\end{align}
where $T^\beta = T K_\beta$ is the Calder\'on-Zygmund operator defined as composition of the Calder\'on-Zygmund operator $T$ and the finite dimensional linear map $K_\beta$.
\\ Hence, an application of Theorem \ref{K-CZ} with the choice $T^\beta=TK_\beta$ and $F^\beta=[L_\beta \mathcal A_k(D)u]$
tells us that
\begin{align}\label{nov140}
\int_0^t  s^{-\frac{k}{n}} u^*(s)\, ds
\leq c\int_0^{t}  s^{-\frac{k}{n}} \int_{  s}^{\infty} [ L_\beta \mathcal A_k(D)u]^*(r)r^{-1+k/n}\,drds  \quad\text{for $t>0$,}
\end{align}
  for some constant $c=c(\mathcal A_k)$.
The inequality \eqref{sobolev1} follows from \eqref{nov140}, via Lemma \ref{KermanPick}.}
\end{proof}

\begin{proof}[Proof of Theorem \ref{sobolev-opt-canc}] By \cite[Theorem~4.4]{EMMP}, the inequality \eqref{sobolev3} holds with $Y(0,\infty)= X_k (0, \infty)$. Thus, the inequality \eqref{sobolev-bound-opt} follows from an application of Theorem \ref{characterization}.
\end{proof}

\begin{proof}[Proof of Corollary \ref{characterization-omega}] The implication \lq\lq (i) $\Rightarrow$ (ii)" trivially holds thanks to the inequality \eqref{dec24-30}.
\\ A proof of the implication \lq\lq (ii) $\Rightarrow$ (iii)" can be found in  \cite{KermanPick} and \cite{cianchi-pick-slavikova}.
\\
The implication  \lq\lq(iii) $\Rightarrow$ (i)" can be deduced  via an extension argument. Assume that the inequality \eqref{sobolev3om} holds.
We claim that 
\begin{equation}\label{sep27}
\bigg\|\int_s^\infty r^{-1+\frac k n}f(r)\, dr \bigg\|_{Y^e(0,\infty)} \leq c_1 \|f\|_{X^e(0, \infty)}
\end{equation}
for every $f \in \Mpl(0,\infty)$ with ${\rm supp} f \subset [0, |\Omega|]$, where $X^e(0, \infty)$ and $Y^e(0, \infty)$ denote the extended function norms defined as in \eqref{sep25}. Indeed, the inequality \eqref{sep27} can be verified via the following chain:
\begin{align}
    \label{sep28}
    \bigg\|\int_s^\infty r^{-1+\frac \alpha n}f(r)\, dr \bigg\|_{Y^e(0,\infty)} & = 
    \bigg\|\bigg(\chi_{[0,|\Omega|]}(\cdot)\int_{(\cdot)}^\infty r^{-1+\frac \alpha n}f(r)\, dr \bigg)^*(s)\bigg\|_{Y^e(0,\infty)}
    \\ \nonumber & = \bigg\| \int_{s}^{|\Omega|} r^{-1+\frac \alpha n}f(r)\, dr  \bigg\|_{Y(0,|\Omega|)}
    \\ \nonumber & \leq c_1 \|f\|_{X(0, |\Omega|)} = c_1\|f^*\|_{X(0, |\Omega|)} = c_1\|f\|_{X^e(0, \infty)}.
\end{align}
 An application of Theorem \ref{characterization} to the spaces $X^e(\rn, \mathbb R^m)$ and $Y^e(\rn, \R^\ell)$, and to the function $u^e: \rn \to \mathbb R^\ell$ obtained by extending $u$ by $0$ in $\rn \setminus \Omega$ tells us that  
\begin{equation}\label{sobolev1ext}
\|u^e\|_{Y^e(\rn, \R^\ell)} \leq c \|\mathcal A_k(D)u^e\|_{X^e(\rn, \rM)}
\end{equation}
for some constant   $c=c(c_3, \mathcal A_k)$.    By the definition of $X^e(\rn, \rM)$, $Y^e(\rn, \R^\ell)$, and $u^e: \rn \to \R^\ell$, one has that
$$\|\mathcal A_k(D)u^e\|_{X^e(\rn, \rM)}= \|(\mathcal A_k(D)u^e)^*\|_{X^e(0,\infty)}=
\|(\mathcal A_k(D)u^e)^*\|_{X(0,|\Omega|)}=\|(\mathcal A_k(D)u)^*\|_{X(\Omega, \rM)}$$
and, analogously
$$\|u^e\|_{Y^e(\rn, \R^\ell)}= \|u\|_{Y^e(\Omega, \R^\ell)}.$$
Therefore, the inequality \eqref{sobolev1om} follows from \eqref{sobolev1ext}.
\end{proof}

\iffalse

The properties of the $K$ functional then give
\begin{align}\label{K_sublinear}
K\big(u, t;   L^{\frac{n}{n-k}, 1}(\rn, \mathbb{R}^l),  L^\infty(\rn, \mathbb{R}^l)\big)  \leq \sum_{|\beta|=k'}  K\big(I_k T c_\beta^{-1} K_\beta F_\beta, t;   L^{\frac{n}{n-k}, 1}(\rn, \mathbb{R}^l),  L^\infty(\rn, \mathbb{R}^l)\big)
\end{align}
and
\begin{align}\label{finite_dimensional_space_bounds}
K\big(F_\beta, t/c;   L^1(\rn, \om),   L^{\frac {n}{k}, 1}(\rn, \om)\big) \lesssim K\big(\mathcal A_k(D)u, t/c;   L^1(\rn, \mathbb{R}^m),   L^{\frac {n}{k}, 1}(\rn, \mathbb{R}^m)\big)
\end{align}
for all $t>0$.  Therefore to establish \eqref{dec32prime-sobolev-final} it suffices to prove that for each $\beta$ one has
\begin{equation}\label{dec32prime-sobolev}
K\big(I_k T c_\beta^{-1} K_\beta F_\beta, t;   L^{\frac{n}{n-k}, 1}(\rn, \om),  L^\infty(\rn, \om)\big) \leq c K\big(F_\beta, t/c;   L^1(\rn, \om),   L^{\frac {n}{k}, 1}(\rn, \om)\big)
\end{equation}
for all $t>0$, for some positive constant  $c=c(n,m, k,k')$.  However, finally this inequality is simply a combination of Theorem \ref{K-CZ} with the choice $Q=c_\beta^{-1} K_\beta$ and $F=F_\beta$ along with the relation \eqref{sep1higher} from Theorem \ref{lemma2-higherorder}.  This completes the proof of the Theorem.
\fi

\begin{proof}[Proof of Corollary \ref{sobolev-bound-opt}] An application of \cite[Theorem~4.5]{EKP} tells us that the inequality \eqref{sobolev3om} holds with $Y(0,|\Omega|)= X_k (0, |\Omega|)$. The inequality \eqref{sobolev-bound-opt-om} is  thus a consequence of Theorem \ref{characterization}.
\end{proof}

\begin{proof}[Proof of Theorem \ref{sobolev-orlicz-canc}] 
By Theorems \ref{characterization} and \ref{characterization-omega}, 
the conclusion from the inequality:
\begin{equation}\label{orlicz5}
\bigg\|\int_s^{L} f (r)r^{-1+\frac k{n-k}} dr \bigg\|_{L^{A_{\frac nk}}(0, L)} \leq  c\|f\|_{L^A(0, L)}
\end{equation}
for some constant $c=c(A, \frac nk, L)$ and for every $f \in L^A(0, L)$, and the optimality of the Orlicz target space $L^{A_{\frac nk}}(0, L)$  in \eqref{orlicz5}. 
Here, $L$ equals either $\infty$ or $|\Omega|$, and $A$ and $A_{\frac nk}$ are as in Part (i) or (ii), respectively. Moreover, $c$  depends only on $\frac nk$ if $L=\infty$.
The inequality \eqref{orlicz5} follows from \cite[Inequality (2.7)]{cianchi_CPDE}. As shown in \cite[Lemma 2]{cianchi_PJM}, the latter inequality is equivalent to the one contained \cite[Lemma 1]{cianchi_IUMJ}, where the optimality of the Orlicz target space is also established.
\end{proof}

\begin{proof}
[Proof of Theorem \ref{sobolev-orliczlorentz-canc}]
Owing to Theorem \ref{characterization-omega}, the assertions of the statement follow from the inequality:
\begin{equation}\label{orliczri2}
\bigg\|\int_s^{|\Omega|} f (r)r^{-1+\frac k{n-k}} dr \bigg\|_{L(\widehat A, \frac nk)(0, |\Omega|)} \leq  c\|f\|_{L^A(0, |\Omega|)}
\end{equation}
for some constant $c=c(A, \frac nk, |\Omega|)$ and every   $f\in L^A(0, |\Omega|)$, and the optimality  of space 
$L(\widehat A, \frac nk)(0, |\Omega|)$  in \eqref{orliczri2} among all rearrangement-invariant spaces. 
These facts are in turn consequences of   \cite[Inequality (3.1) and Proof of Theorem 1.1]{cianchi_ibero}.
\end{proof}

\iffalse
\begin{proof}[Proof of Theorem \ref{lorentzzygmund}]
    
\end{proof}
\fi

% \bigskip
%\noindent
% {\bf Acknowledgment}. We wish to thank the referees for their valuable comments. 

%%%%%%%%%%%%%%%%%%%%%%%%%%%%%%%%%%%%%%%%%%%%%%%%%%%%%%%%%%%%%%%%%%%%%%%%%%%%%%%%%%%%%%%%%%%%%%
\bigskip

\bigskip{}{}

 \par\noindent {\bf Data availability statement.} Data sharing not applicable to this article as no datasets were generated or analysed during the current study.

\section*{Compliance with Ethical Standards}\label{conflicts}

\smallskip
\par\noindent
{\bf Funding}. This research was partly funded by:
\\
(i) Grant BR 4302/3-1 (525608987) by the German Research Foundation (DFG) within the framework of the priority research program SPP 2410 (D. Breit);\\
(ii) Grant BR 4302/5-1 (543675748) by the German Research Foundation (DFG) (D. Breit);
\\ (iii) GNAMPA   of the Italian INdAM - National Institute of High Mathematics (grant number not available)  (A. Cianchi);
\\ (iv) Research Project   of the Italian Ministry of Education, University and
Research (MIUR) Prin 2017 ``Direct and inverse problems for partial differential equations: theoretical aspects and applications'',
grant number 201758MTR2 (A. Cianchi);
\\ (v) Research Project   of the Italian Ministry of Education, University and
Research (MIUR) Prin 2022 ``Partial differential equations and related geometric-functional inequalities'',
grant number 20229M52AS, cofunded by PNRR (A. Cianchi);
\\  (vi) National Science and Technology Council of Taiwan research grant numbers 110-2115-M-003-020-MY3/113-2115-M-003-017-MY3 (D. Spector);
\\ (vii) Taiwan Ministry of Education under the Yushan Fellow Program (D. Spector).

\bigskip
\par\noindent
{\bf Conflict of Interest}. The authors declare that they have no conflict of interest.

\begin{bibdiv}

\begin{biblist}

\bib{ACPS}{article}{
  author={Alberico, Angela},
  author={Cianchi, Andrea},
  author={Pick, Lubos},
  author={Slav\'ikov\'a, Lenka},
  title={Fractional Orlicz-Sobolev embeddings},
  journal={J. Math. Pures Appl.},
   volume={149},
  date={2021},
  number={7},
  pages={539--543},
%issn={1631-073X},
  %doi={10.1016/j.crma.2003.12.031},
}

\bib{ACPS_NA}{article}{
   author={Alberico, Angela},
   author={Cianchi, Andrea},
   author={Pick, Lubo\v{s}},
   author={Slav\'{\i}kov\'{a}, Lenka},
   title={Boundedness of functions in fractional Orlicz-Sobolev spaces},
   journal={Nonlinear Anal.},
   volume={230},
   date={2023},
   pages={Paper No. 113231, 26},
   issn={0362-546X},
   review={\MR{4551936}},
   doi={10.1016/j.na.2023.113231},
}

 \bib{Aubin}{article}{
   author={Aubin, Thierry},
   title={Probl\`emes isop\'{e}rim\'{e}triques et espaces de Sobolev},
   language={French},
   journal={J. Differential Geometry},
   volume={11},
   date={1976},
   number={4},
   pages={573--598},
   issn={0022-040X},
   review={\MR{448404}},
}

\bib{BennettSharpley}{book}{
   author={Bennett, Colin},
   author={Sharpley, Robert},
   title={Interpolation of operators},
   series={Pure and Applied Mathematics},
   volume={129},
   publisher={Academic Press, Inc., Boston, MA},
   date={1988},
   pages={xiv+469},
   isbn={0-12-088730-4},
   review={\MR{928802}},
}

% \bib{BerghLofstrom}{book}{
%    author={Bergh, J\"{o}ran},
%    author={L\"{o}fstr\"{o}m, J\"{o}rgen},
%    title={Interpolation spaces. An introduction},
%    series={Grundlehren der Mathematischen Wissenschaften},
%    volume={No. 223},
%    publisher={Springer-Verlag, Berlin-New York},
%    date={1976},
%    pages={x+207},
%    review={\MR{0482275}},
% }

\bib{Bourgain}{article}{
  author={Bourgain, Jean},
  title={Some consequences of Pisier's approach to interpolation},
  journal={Israel Journal of Mathematics},
   volume={77},
  date={1992},
  pages={165--185},
%issn={1631-073X},
  %doi={10.1016/j.crma.2003.12.031},
}

\bib{BourgainBrezis2004}{article}{
  author={Bourgain, Jean},
  author={Brezis, Ha{\"{\i}}m},
  title={New estimates for the Laplacian, the div-curl, and related Hodge
  systems},
  journal={C. R. Math. Acad. Sci. Paris},
   volume={338},
  date={2004},
  number={7},
  pages={539--543},
%issn={1631-073X},
  %doi={10.1016/j.crma.2003.12.031},
}

\bib{BourgainBrezis2007}{article}{
   author={Bourgain, Jean},
   author={Brezis, Ha{\"{\i}}m},
   title={New estimates for elliptic equations and Hodge type systems},
   journal={J. Eur. Math. Soc. (JEMS)},
   volume={9},
  date={2007},
  number={2},
   pages={277--315},
  }
  
  \bib{BourgainBrezisMironescu}{article}{
   author={Bourgain, Jean},
   author={Brezis, Haim},
   author={Mironescu, Petru},
   title={$H^{1/2}$ maps with values into the circle: minimal
   connections, lifting, and the Ginzburg-Landau equation},
   journal={Publ. Math. Inst. Hautes \'Etudes Sci.},
   number={99},
   date={2004},
   pages={1--115},
  % issn={0073-8301},
}

\bib{Breit-Cianchi}{article}{
   author={Breit, Dominic},
   author={Cianchi, Andrea},
   title={Symmetric gradient Sobolev spaces endowed with
   rearrangement-invariant norms},
   journal={Adv. Math.},
   volume={391},
   date={2021},
   pages={Paper No. 107954, 101},
   issn={0001-8708},
   review={\MR{4303731}},
   doi={10.1016/j.aim.2021.107954},
}

% \bib{BrDF}{article}{
%     AUTHOR = {Breit, Dominic},
%     AUTHOR= {Diening, Lars}, 
%     AUTHOR= {Fuchs, Martin},
%      TITLE = {Solenoidal {L}ipschitz truncation and applications in fluid
%               mechanics},
%    JOURNAL = {J. Differential Equations},
%   FJOURNAL = {Journal of Differential Equations},
%     VOLUME = {253},
%       YEAR = {2012},
%     NUMBER = {6},
%      PAGES = {1910--1942},
%       ISSN = {0022-0396,1090-2732},
%    MRCLASS = {35Q35 (35A01 35D30 35J60 76A05)},
%   MRNUMBER = {2943947},
% MRREVIEWER = {Ilia\ S.\ Mogilevski\u{\i}},
%        DOI = {10.1016/j.jde.2012.05.010},
%        URL = {https://doi.org/10.1016/j.jde.2012.05.010},
% }

\bib{CZ}{article}{
   author={Calderon, A. P.},
   author={Zygmund, A.},
   title={On the existence of certain singular integrals},
   journal={Acta Math.},
   volume={88},
   date={1952},
   pages={85--139},
   issn={0001-5962},
   review={\MR{52553}},
   doi={10.1007/BF02392130},
}

\bib{cavaliere-new}{article}{AUTHOR = {Cavaliere, Paola},
    AUTHOR= {Drazny, Ladislav},
   title={Sobolev embeddings for Lorentz-Zygmund spaces },
   journal={preprint},
   volume={},
   date={},
   number={},
   pages={},
   issn={},
   review={},
}

\bib{cianchi_IUMJ}{article}{
   author={Cianchi, Andrea},
   title={A sharp embedding theorem for Orlicz-Sobolev spaces},
   journal={Indiana Univ. Math. J.},
   volume={45},
   date={1996},
   number={1},
   pages={39--65},
   issn={0022-2518},
   review={\MR{1406683}},
   doi={10.1512/iumj.1996.45.1958},
}

\bib{cianchi_CPDE}{article}{
   author={Cianchi, Andrea},
   title={Boundedness of solutions to variational problems under general
   growth conditions},
   journal={Comm. Partial Differential Equations},
   volume={22},
   date={1997},
   number={9-10},
   pages={1629--1646},
   issn={0360-5302},
   review={\MR{1469584}},
   doi={10.1080/03605309708821313},
}

% \bib{Cianchi-JLMS}{article}{
%    author={Cianchi, Andrea},
%    title={Strong and weak type inequalities for some classical operators in
%    Orlicz spaces},
%    journal={J. London Math. Soc. (2)},
%    volume={60},
%    date={1999},
%    number={1},
%    pages={187--202},
%    issn={0024-6107},
%    review={\MR{1721824}},
%    doi={10.1112/S0024610799007711},
% }

\bib{cianchi_PJM}{article}{
   author={Cianchi, Andrea},
   title={A fully anisotropic Sobolev inequality},
   journal={Pacific J. Math.},
   volume={196},
   date={2000},
   number={2},
   pages={283--295},
   issn={0030-8730},
   review={\MR{1800578}},
   doi={10.2140/pjm.2000.196.283},
}

\bib{cianchi_ibero}{article}{
   author={Cianchi, Andrea},
   title={Optimal Orlicz-Sobolev embeddings},
   journal={Rev. Mat. Iberoamericana},
   volume={20},
   date={2004},
   number={2},
   pages={427--474},
   issn={0213-2230},
   review={\MR{2073127}},
   doi={10.4171/RMI/396},
}

\bib{cianchi_forum}{article}{
   author={Cianchi, Andrea},
   title={Higher-order Sobolev and Poincar\'{e} inequalities in Orlicz spaces},
   journal={Forum Math.},
   volume={18},
   date={2006},
   number={5},
   pages={745--767},
   issn={0933-7741},
   review={\MR{2265898}},
   doi={10.1515/FORUM.2006.037},
}

\bib{cianchi-pick-slavikova}{article}{
   author={Cianchi, Andrea},
   author={Pick, Lubo\v{s}},
   author={Slav\'{\i}kov\'{a}, Lenka},
   title={Higher-order Sobolev embeddings and isoperimetric inequalities},
   journal={Adv. Math.},
   volume={273},
   date={2015},
   pages={568--650},
   issn={0001-8708},
   review={\MR{3311772}},
   doi={10.1016/j.aim.2014.12.027},
}

\bib{CPS_Frostman}{article}{
   author={Cianchi, Andrea},
   author={Pick, Lubo\v{s}},
   author={Slav\'{\i}kov\'{a}, Lenka},
   title={Sobolev embeddings, rearrangement-invariant spaces and Frostman
   measures},
   language={English, with English and French summaries},
   journal={Ann. Inst. H. Poincar\'{e} C Anal. Non Lin\'{e}aire},
   volume={37},
   date={2020},
   number={1},
   pages={105--144},
   issn={0294-1449},
   review={\MR{4049918}},
   doi={10.1016/j.anihpc.2019.06.004},
}

\bib{DG}{article}{
    AUTHOR = {Diening, Lars},
    AUTHOR = {Gmeineder, Franz},
     TITLE = {Continuity points via {R}iesz potentials for
              {$\mathbb{C}$}-elliptic operators},
   JOURNAL = {Q. J. Math.},
%  FJOURNAL = {The Quarterly Journal of Mathematics},
    VOLUME = {71},
      YEAR = {2020},
    NUMBER = {4},
     PAGES = {1201--1218},
%      ISSN = {0033-5606,1464-3847},
 %  MRCLASS = {26B30 (31C45)},
 % MRNUMBER = {4186516},
%MRREVIEWER = {Juha\ K.\ Kinnunen},
       DOI = {10.1093/qmathj/haaa027},
       URL = {https://doi.org/10.1093/qmathj/haaa027},
}

\bib{DG2}{article}{
    AUTHOR = {Diening, Lars},
    AUTHOR = {Gmeineder, Franz},
     TITLE = {Sharp Trace and Korn Inequalities for Differential Operators},
   JOURNAL = {Potent. Anal.},
%  FJOURNAL = {Potential Analysis},
    VOLUME = {},
      YEAR = {2024},
    NUMBER = {},
     PAGES = {},
%      ISSN = {},
%   MRCLASS = {},
%  MRNUMBER = {},
%MRREVIEWER = {},
       DOI = {10.1007/s11118-024-10165-1},
       URL = {https://doi.org/10.1007/s11118-024-10165-1},
}

% \bib{DRW}{article}{
%     AUTHOR = {Diening, Lars},
% AUTHOR= {R\r{u}\v{z}i\v{c}ka, Michael},
%  AUTHOR= {Wolf, J\"{o}rg},
%      TITLE = {Existence of weak solutions for unsteady motions of
%               generalized {N}ewtonian fluids},
%    JOURNAL = {Ann. Sc. Norm. Super. Pisa Cl. Sci. (5)},
%  % FJOURNAL = {Annali della Scuola Normale Superiore di Pisa. Classe di
%  %             Scienze. Serie V},
%     VOLUME = {9},
%       YEAR = {2010},
%     NUMBER = {1},
%      PAGES = {1--46},
%       ISSN = {0391-173X,2036-2145},
%  %  MRCLASS = {76D03 (34A34 35D30 35Q35 76A05)},
% %  MRNUMBER = {2668872},
% %MRREVIEWER = {Miroslav\ Bul\'{\i}\v{c}ek},
% }

\bib{EGP}{article}{
   author={Edmunds, David E.},
   author={Gurka, Petr},
   author={Pick, Lubo\v{s}},
   title={Compactness of Hardy-type integral operators in weighted Banach
   function spaces},
   journal={Studia Math.},
   volume={109},
   date={1994},
   number={1},
   pages={73--90},
   issn={0039-3223},
   review={\MR{1267713}},
}

\bib{EKP}{article}{
   author={Edmunds, D. E.},
   author={Kerman, R.},
   author={Pick, L.},
   title={Optimal Sobolev imbeddings involving rearrangement-invariant
   quasinorms},
   journal={J. Funct. Anal.},
   volume={170},
   date={2000},
   number={2},
   pages={307--355},
   issn={0022-1236},
   review={\MR{1740655}},
   doi={10.1006/jfan.1999.3508},
}

\bib{EMMP}{article}{
   author={Edmunds, David E.},
   author={Mihula, Zden\v{e}k},
   author={Musil, V\'{\i}t},
   author={Pick, Lubo\v{s}},
   title={Boundedness of classical operators on rearrangement-invariant
   spaces},
   journal={J. Funct. Anal.},
   volume={278},
   date={2020},
   number={4},
   pages={108341, 56},
   issn={0022-1236},
   review={\MR{4044737}},
   doi={10.1016/j.jfa.2019.108341},
}

\bib{federer}{article}{
   author={Federer, Herbert},
   author={Fleming, Wendell H.},
   title={Normal and integral currents},
   journal={Ann. of Math. (2)},
   volume={72},
   date={1960},
   pages={458--520},
   issn={0003-486X},
   review={\MR{123260}},
   doi={10.2307/1970227},
}
	
\bib{gagliardo}{article}{
   author={Gagliardo, Emilio},
   title={Propriet\`a di alcune classi di funzioni in pi\`u variabili},
   language={Italian},
   journal={Ricerche Mat.},
   volume={7},
   date={1958},
   pages={102--137},
   issn={0035-5038},
   review={\MR{102740}},
}

\bib{GRV}{article}{
    AUTHOR = {Gmeineder, Franz},
    AUTHOR = {Rai\c t\u a, Bogdan},
    AUTHOR = {Van Schaftingen, Jean},
     TITLE = {On limiting trace inequalities for vectorial differential
              operators},
   JOURNAL = {Indiana Univ. Math. J.},
%  FJOURNAL = {Indiana University Mathematics Journal},
    VOLUME = {70},
      YEAR = {2021},
    NUMBER = {5},
     PAGES = {2133--2176},
      ISSN = {0022-2518,1943-5258},
%   MRCLASS = {35J05 (46E35)},
%  MRNUMBER = {4340491},
       DOI = {10.1512/iumj.2021.70.8682},
       URL = {https://doi.org/10.1512/iumj.2021.70.8682},
}

% \bib{FV}{article}{
%    author={Ferrari, Fausto and Verbitsky, Igor E.},
%    title={Radial fractional Laplace operators and Hessian inequalites},
%    journal={},
%    volume={},
%    date={},
%    number={},
%    pages={},
%    issn={},
%    review={},
%    doi={arXiv:1203.3149},
% }

% \bib{Gagliardo}{article}{
%    author={Gagliardo, Emilio},
%    title={Propriet\`a di alcune classi di funzioni in pi\`u variabili},
% %    language={Italian},
%    journal={Ricerche Mat.},
%    volume={7},
%    date={1958},
%    pages={102--137},
%    issn={0035-5038},
%    %review={\MR{0102740 (21 \#1526)}},
% }

\bib{grafakos}{book}{
   author={Grafakos, Loukas},
   title={Classical Fourier analysis},
   series={Graduate Texts in Mathematics},
   volume={249},
   edition={3},
   publisher={Springer, New York},
   date={2014},
   pages={xviii+638},
   isbn={978-1-4939-1193-6},
   isbn={978-1-4939-1194-3},
   review={\MR{3243734}},
   doi={10.1007/978-1-4939-1194-3},
}

\bib{Greco-Moscariello}{article}{
   author={Greco, L.},
   author={Moscariello, G.},
   title={An embedding theorem in Lorentz-Zygmund spaces},
   journal={Potential Anal.},
   volume={5},
   date={1996},
   number={6},
   pages={581--590},
   issn={0926-2601},
   review={\MR{1437585}},
   doi={10.1007/BF00275795},
}

\bib{HS}{article}{
   author={Hernandez, Felipe},
   author={Spector, Daniel},
   title={Fractional integration and optimal estimates for elliptic systems},
   journal={Calc. Var. Partial Differential Equations},
   volume={63},
   date={2024},
   number={5},
   pages={Paper No. 117, 29},
   issn={0944-2669},
   review={\MR{4739434}},
   doi={10.1007/s00526-024-02722-8},
}

\bib{HRS}{article}{
   author={Hernandez, Felipe},
   author={Rai\c{t}\u{a}, Bogdan},
   author={Spector, Daniel},
   title={Endpoint $L^1$ estimates for Hodge systems},
   journal={Math. Ann.},
   volume={385},
   date={2023},
   number={3-4},
   pages={1923--1946},
   issn={0025-5831},
   review={\MR{4566709}},
   doi={10.1007/s00208-022-02383-y},
}

\bib{Hol}{article}{
   author={Holmstedt, Tord},
   title={Interpolation of quasi-normed spaces},
   journal={Math. Scand.},
   volume={26},
   date={1970},
   pages={177--199},
   issn={0025-5521},
   review={\MR{415352}},
   doi={10.7146/math.scand.a-10976},
}

\bib{KermanPick}{article}{
   author={Kerman, Ron},
   author={Pick, Lubo\v{s}},
   title={Optimal Sobolev imbeddings},
   journal={Forum Math.},
   volume={18},
   date={2006},
   number={4},
   pages={535--570},
   issn={0933-7741},
   review={\MR{2254384}},
   doi={10.1515/FORUM.2006.028},
}

\bib{LanzaniStein}{article}{
   author={Lanzani, Loredana},
   author={Stein, Elias M.},
   title={A note on div curl inequalities},
   journal={Math. Res. Lett.},
   volume={12},
   date={2005},
   number={1},
   pages={57--61},
   issn={1073-2780},
   review={\MR{2122730}},
   doi={10.4310/MRL.2005.v12.n1.a6},
}

\bib{mazya}{article}{
   author={Maz\cprime ya, V. G.},
   title={Classes of domains and imbedding theorems for function spaces},
   language={Russian},
   journal={Dokl. Akad. Nauk SSSR},
   volume={133},
   pages={527--530},
   issn={0002-3264},
   translation={
      journal={Soviet Math. Dokl.},
      volume={1},
      date={1960},
      pages={882--885},
      issn={0197-6788},
   },
   review={\MR{126152}},
}

\bib{mihula}{article}{
   author={Mihula, Zden\v{e}k},
   title={Embeddings of homogeneous Sobolev spaces on the entire space},
   journal={Proc. Roy. Soc. Edinburgh Sect. A},
   volume={151},
   date={2021},
   number={1},
   pages={296--328},
   issn={0308-2105},
   review={\MR{4202643}},
   doi={10.1017/prm.2020.14},
}

\bib{Mizuta}{book}{
   author={Mizuta, Yoshihiro},
   title={Potential theory in Euclidean spaces},
   series={GAKUTO International Series. Mathematical Sciences and
   Applications},
   volume={6},
   publisher={Gakk$\B$ otosho Co., Ltd., Tokyo},
   date={1996},
   pages={viii+341},
   isbn={4-7625-0415-7},
   review={\MR{1428685}},
}

\bib{Moser}{article}{
   author={Moser, J.},
   title={A sharp form of an inequality by N. Trudinger},
   journal={Indiana Univ. Math. J.},
   volume={20},
   date={1970/71},
   pages={1077--1092},
   issn={0022-2518},
   review={\MR{301504}},
   doi={10.1512/iumj.1971.20.20101},
}

\bib{nirenberg}{article}{
   author={Nirenberg, L.},
   title={On elliptic partial differential equations},
   journal={Ann. Scuola Norm. Sup. Pisa Cl. Sci. (3)},
   volume={13},
   date={1959},
   pages={115--162},
   issn={0391-173X},
   review={\MR{109940}},
}

\bib{oneil}{article}{
   author={O'Neil, Richard},
   title={Convolution operators and {$L(p,\,q)$} spaces},
   journal={Duke Math. J.},
   volume={30},
   date={1963},
   pages={129--142},
   issn={0012-7094},
   %review={\MR{0194881}},
   %doi={10.2307/1994271},
}

\bib{peetre}{article}{
   author={Peetre, Jaak},
   title={Espaces d'interpolation et th\'{e}or\`eme de {S}oboleff},
   journal={Ann. Inst. Fourier (Grenoble)},
   volume={16},
   date={1966},
   pages={279--317},
   issn={0373-0956},
   review={\MR{MR221282}},
   doi={ },
}

\bib{Pisier}{article}{
   author={Pisier, Gilles},
   title={Interpolation between $H^p$ spaces and noncommutative
   generalizations. I},
   journal={Pacific J. Math.},
   volume={155},
   date={1992},
   number={2},
   pages={341--368},
   issn={0030-8730},
   review={\MR{1178030}},
}

\bib{RaitaSpector}{article}{
   author={Rai\c{t}\u{a}, Bogdan},
   author={Spector, Daniel},
   title={A note on estimates for elliptic systems with $L^1$ data},
   journal={C. R. Math. Acad. Sci. Paris},
   volume={357},
   date={2019},
   number={11-12},
   pages={851--857},
   issn={1631-073X},
   review={\MR{4038260}},
   doi={10.1016/j.crma.2019.11.007},
}

\bib{RSS}{article}{
   author={Rai\c{t}\u{a}, Bogdan},
   author={Spector, Daniel},
   author={Stolyarov, Dmitriy},
   title={A trace inequality for solenoidal charges},
   journal={Potential Anal.},
   volume={59},
   date={2023},
   number={4},
   pages={2093--2104},
   issn={0926-2601},
   review={\MR{4684387}},
   doi={10.1007/s11118-022-10008-x},
}

\bib{SSVS}{article}{
   author={Schikorra, Armin},
   author={Spector, Daniel},
   author={Van Schaftingen, Jean},
   title={An $L^1$-type estimate for Riesz potentials},
   journal={Rev. Mat. Iberoam.},
   volume={33},
   date={2017},
   number={1},
   pages={291--303},
   issn={0213-2230},
   review={\MR{3615452}},
   doi={10.4171/RMI/937},
}

\bib{sobolev}{article}{
   author={Sobolev, S.L.},
    title={On a theorem of functional analysis},
   journal={Mat. Sb.},
   volume={4},
   number={46},
  year={1938},
  language={Russian},
   pages={471-497},
   translation={
      journal={Transl. Amer. Math. Soc.},
      volume={34},
     date={},
      pages={39-68},
   },
   }

\bib{Spector-VanSchaftingen-2018}{article}{
   author={Spector, Daniel},
   author={Van Schaftingen, Jean},
   title={Optimal embeddings into Lorentz spaces for some vector
   differential operators via Gagliardo's lemma},
   journal={Atti Accad. Naz. Lincei Rend. Lincei Mat. Appl.},
   volume={30},
   date={2019},
   number={3},
   pages={413--436},
   issn={1120-6330},
   review={\MR{4002205}},
   doi={10.4171/RLM/854},
}

\bib{Stolyarov}{article}{
   author={Stolyarov, D. M.},
   title={Hardy-Littlewood-Sobolev inequality for $p=1$},
   language={Russian, with Russian summary},
   journal={Mat. Sb.},
   volume={213},
   date={2022},
   number={6},
   pages={125--174},
   issn={0368-8666},
   translation={
      journal={Sb. Math.},
      volume={213},
      date={2022},
      number={6},
      pages={844--889},
      issn={1064-5616},
   },
   review={\MR{4461456}},
   doi={10.4213/sm9645},
}

% \bib{Strauss}{article}{
%    author={Strauss, Monty J.},
%    title={Variations of Korn's and Sobolev's equalities},
%    conference={
%       title={Partial differential equations},
%       address={Univ. California,
%       Berkeley, Calif.},
%       date={1971},
%    },
%    book={
%       publisher={Amer. Math. Soc., Providence, R.I.},
%       series={Proc. Sympos. Pure Math.},
%       volume={XXIII},
%    },
%    date={1973},
%    pages={207--214},
% }

% \bib{Stein}{book}{
%    author={Stein, Elias M.},
%    title={Singular integrals and differentiability properties of functions},
%    series={Princeton Mathematical Series, No. 30},
%    publisher={Princeton University Press, Princeton, N.J.},
%    date={1970},
%    pages={xiv+290},
%    %review={\MR{0290095}},
% }

% \bib{SteinWeiss}{article}{
%    author={Stein, Elias M.},
%    author={Weiss, Guido},
%    title={On the theory of harmonic functions of several variables. I. The
%    theory of $H^{p}$-spaces},
%    journal={Acta Math.},
%    volume={103},
%    date={1960},
%    pages={25--62},
%    issn={0001-5962},
%    %review={\MR{0121579}},
%  %  doi={10.1007/BF02546524},
% }

\bib{Talenti}{article}{
   author={Talenti, Giorgio},
   title={Best constant in Sobolev inequality},
   journal={Ann. Mat. Pura Appl. (4)},
   volume={110},
   date={1976},
   pages={353--372},
   issn={0003-4622},
   review={\MR{463908}},
   doi={10.1007/BF02418013},
}

\bib{VS}{article}{
   author={Van Schaftingen, Jean},
   title={A simple proof of an inequality of Bourgain, Brezis and Mironescu},
   language={English, with English and French summaries},
   journal={C. R. Math. Acad. Sci. Paris},
   volume={338},
   date={2004},
   number={1},
   pages={23--26},
   issn={1631-073X},
   review={\MR{2038078}},
   doi={10.1016/j.crma.2003.10.036},
}

\bib{VS2}{article}{
   author={Van Schaftingen, Jean},
   title={Estimates for $L^1$ vector fields under higher-order differential
   conditions},
   journal={J. Eur. Math. Soc. (JEMS)},
   volume={10},
   date={2008},
   number={4},
   pages={867--882},
   issn={1435-9855},
   review={\MR{2443922}},
   doi={10.4171/JEMS/133},
}

\bib{VS2a}{article}{
   author={Van Schaftingen, Jean},
   title={Limiting fractional and Lorentz space estimates of differential
   forms},
   journal={Proc. Amer. Math. Soc.},
   volume={138},
   date={2010},
   number={1},
   pages={235--240},
   issn={0002-9939},
%    review={\MR{2550188}},
   doi={10.1090/S0002-9939-09-10005-9},
}
\bib{VS3}{article}{
   author={Van Schaftingen, Jean},
   title={Limiting Sobolev inequalities for vector fields and canceling
   linear differential operators},
   journal={J. Eur. Math. Soc. (JEMS)},
   volume={15},
   date={2013},
   number={3},
   pages={877--921},
   issn={1435-9855},
   review={\MR{3085095}},
   doi={10.4171/JEMS/380},
}

\bib{VS4}{article}{
   author={Van Schaftingen, Jean},
   title={Limiting Bourgain-Brezis estimates for systems of linear
   differential equations: theme and variations},
   journal={J. Fixed Point Theory Appl.},
   volume={15},
   date={2014},
   number={2},
   pages={273--297},
   issn={1661-7738},
%    review={\MR{3298002}},
   doi={10.1007/s11784-014-0177-0},
}

%\bib{Zygmund}{article}{
%   author={Zygmund, A.},
%   title={On a theorem of Marcinkiewicz concerning interpolation of
  % operations},
  %  journal={J. Math. Pures Appl. (9)},
  %  volume={35},
   %date={1956},
 %  pages={223--248},
    %issn={0021-7824},
   %review={\MR{0080887}},
%}

\end{biblist}
	
\end{bibdiv}

\end{document}